\newcommand{\ds}{\displaystyle}
\newcommand{\reals}{\mathbb{R}}
\newcommand{\realstwo}{\mathbb{R}^2}
\newcommand{\realsthree}{\mathbb{R}^3}
\newcommand{\xb}{{\bf{x}}}
\newcommand{\cD}{\mathscr{D}}
\newcommand{\Dn}{\partial_{\nu}}
\newcommand{\cE}{{\mathcal{E}}}
\newcommand{\cF}{\mathcal{F}(z)}
\newcommand{\R}{\mathbb{R}}
\newcommand{\bu}{\mathbf u}
\newcommand{\bp}{\mathbf{\Phi}}
\newcommand{\bE}{\mathbf{E}}
\newcommand{\bT}{\mathbb{T}}
\newcommand{\lb}{ \langle}
\newcommand{\rb}{ \rangle}
\theoremstyle{plain}
\newtheorem{theorem}{Theorem}[section]
\newtheorem{lemma}[theorem]{Lemma}
\newtheorem{corollary}[theorem]{Corollary}
\theoremstyle{remark}
\newtheorem{remark}{Remark}[section]
\numberwithin{equation}{section}
\numberwithin{theorem}{section}
\numberwithin{remark}{section}
\numberwithin{assumption}{section}
\numberwithin{condition}{section}
\begin{document}

\title{Eliminating flutter for clamped von Karman plates immersed in subsonic flows}
 \author{\normalsize \begin{tabular}[t]{c@{\extracolsep{.8em}}c}
            {\LARGE Irena Lasiecka} & {\LARGE Justin T. Webster} \\
 \it University of Memphis    &\it North Carolina State University \\
 \it Memphis, TN &\it Raleigh, NC\\[.2cm]
  \it Polish Academy of Sciences, IBS & \it College of Charleston  \\
  \it Warsaw, Poland & \it Charleston, SC\\[.2cm]
    \it lasiecka@memphis.edu &  \it jtwebste@ncsu.edu \\
\end{tabular}}
\maketitle

\begin{abstract}
\noindent We address the long-time behavior of a non-rotational von Karman plate in an inviscid potential flow. The model arises in aeroelasticity and models the interaction between a thin, nonlinear panel and a flow of gas in which it is immersed \cite{bolotin,dowellnon,dowell1}. Recent results in \cite{delay, fereisel} show that the {\em plate component} of the dynamics (in the presence of a physical plate nonlinearity) converge to a global compact attracting set of finite dimension; these results were obtained {\em in the absence of mechanical damping of any type}. Here we show that, by incorporating mechanical damping the full flow-plate system, {\em full trajectories---both plate and flow---converge strongly to (the set of) stationary states}. Weak convergence results, as well as results for smooth initial data, require ``minimal" interior damping, and strong convergence of the dynamics are shown with sufficiently large damping. We require the existence of a ``good" energy balance equation, which is only available when the flows are {\em subsonic}. Our proof is based on first showing the convergence properties for regular solutions, which in turn requires propagation of initial regularity on the infinite horizon. Then, we utilize the exponential decay of the difference of two plate trajectories to show that full flow-plate trajectories are uniform-in-time Hadamard continuous. This allows us to pass convergence properties of smooth initial data to finite energy type initial data. Physically, our results imply that flutter (a non-static end behavior) does not occur in subsonic dynamics.  While such results were known for rotational (compact/regular) plate dynamics \cite{springer} (and references therein),  the result presented herein is the first such result obtained for non-regularized---the most physically relevant---models.
\vskip.15cm

\noindent {\em Key Terms}: Strong stability, nonlinear plates, mathematical aeroelasticity, flutter, nonlinear semigroups
\vskip.15cm
\noindent {\em 2010 AMS}: 74F10, 74K20, 76G25, 35B40, 35G25, 37L15 \end{abstract}

\section{Introduction}

The work herein is motivated by physical problems arising in aeroelasticity  where one of the principal  issues is to eliminate, or reduce,  {\em flutter}  resulting from  the oscillations of thin structures  immersed in  a flow of gas. Canonical examples include: suspended bridges or buildings subjected to strong winds, airfoils oscillating in the flow of gas,  or a fixed  panel   element oscillating in  gas flow, or wind mills in an open field, etc. It is well recognized that flutter is a systemic phenomenon and one of the principal technological challenges is to control  the fluttering behavior of mechanical structures.

From a mathematical point of view the aforementioned flutter problem may be described as {\it stabilization to a given set} of a coupled evolutionary PDE system which consists of a nonlinear plate  interacting in a hybrid way with a three dimensional flow equation.  This is one of the principal  PDE models arising in aeroelasticity: the interactive dynamics between a nonlinear clamped plate and a surrounding potential flow \cite{bolotin,dowellnon}. (This class of models is standard in the applied mathematics literature  and dates back to classical discussions \cite{bolotin,dowellnon}, and also \cite{B,dowell1} and the references therein). Specifically, we consider von Karman plates in the absence of {\em rotational inertia} (consistent with the conventions of aeroelasticity \cite{dowellnon}), where the plate is thin and in-plane accelerations are not accounted for \cite{lagnese}. It is by now well known that the presence of rotational terms provides a regularizing effect on the transverse velocity of the plate, which in turn leads to several desirable mathematical properties such as compactness, gain of derivatives,  etc. Thus, it was natural that the early PDE  theory of flow-plate dynamics  was developed with rotational inertia  present. With this assumption there is a rather rich theory  that has been developed
 for  both well-posedness and long-time behavior of the  dynamics, subjected to strong
 mechanical damping implemented on surface the plate (see  \cite{springer} and the references therein).
 In particular, it is known that  such a system  is ``strongly stable" in the subsonic regime of velocities \cite{springer}.
 However, the  analysis mentioned above depends critically on the presence of rotational inertia terms
 which provide regularizing effects. From the modeling point of view  the appropriate model \cite{dowell1}
 should  be considered  {\it without plate  inertial terms}.  This  task defines  the main challenge of the present paper: to determine conditions under which the resulting  system is {\it strongly stable}---in a sense that full flow-plate
trajectories converge,  in the strong topology of the underlying  finite energy space,  to a set of equilibria. Under simplifying physical assumptions this will imply that flutter is non-present asymptotically in time.
 Our result demonstrates that, indeed, flutter can be eliminated  (in the subsonic regime), provided a sufficiently large damping is applied to the plate.

 On the mathematical side, the primary issue to contend with is low regularity of the hyperbolic Neumann map (from the boundary into the interior)---i.e., the failure of the uniform Lopatinski conditions in dimensions higher than one. This precludes direct analysis of the effects of the coupling on stability by resorting to the compactness properties of the aforementioned Neumann map  (valid in the rotational case, where the velocity of the plate has one additional derivative).
 Instead, in the non-rotational case, there is a loss of $1/3$ derivative  (with respect to finite energy space) in the  Neumann map  \cite{tataru},  which then  prevents  any sort of direct analysis via the coupling.  This challenge  is reflected in our methodology which
 must depart substantially from previous literature.   In fact, while we still rely on many past developments, the key point of departure is that we can no longer afford to treat the problem component-wise. Rather we must rely on global analysis involving so called  relaxed compactness  \cite{dafermos,slemrod},  which depends on  time invariance of higher energies. This latter property is highly non-trivial due to the effects of the physical von Karman nonlinearity on the plate.

\subsection{Previous literature in relation to the present work}\label{prevv}
The study in \cite{delay} provides   long-time asymptotic  properties of finite energy solutions (for the non-rotational plate); the originally ``rough'' dynamics become, {\em without any added damping}, smooth and finite dimensional in nature. To obtain ``compact attracting behavior" in the absence of structural damping mechanisms we note that the flow has the ability to dissipate mechanical plate energy and thereby induce a degree of stability to the structural dynamics. This dissipative effect is not immediately noticeable in the standard energy balance equation. However, a reduction technique introduced in \cite{LBC96,b-c-1}---described below in Theorem \ref{rewrite}---allows us to write the full flow-structure interaction as a certain delayed plate model, and demonstrates the stabilizing effects of the flow provided that {\em rotational inertia terms in the plate are not present}. The flow dynamics manifest themselves in the form of non-conservative forces acting upon the structure via the {\em downwash} of the flow. In the case where rotational inertia is present in the plate model, the downwash of the flow is not substantial enough to dissipate the mass term due to inertia.

We now mention two other closely related scenarios which have been studied in the literature to date: (i) the addition of thermoelastic dynamics to the plate, and (ii) the presence of rotational inertia {\em and} strong mechanical damping. The treatments in \cite{ryz,ryz2} consider the plate ($\alpha\ge0$) with the addition of a heat equation in the plate dynamics. In this case no damping is necessary, as the analytic smoothing and stability properties of thermoelasticity provide ultimate compactness of the finite energy plate dynamics and, furthermore, provide a stabilizing effect to the flow dynamics as well. As for (ii) mentioned above, results on ultimate compactness of plate dynamics, as well as convergence (for subsonic flows) of full flow-plate trajectories to points of equilibria, were announced in \cite{chuey}, with a proof appearing in \cite{springer}.

The main goal of the present paper is to provide a  strengthening of the aforementioned results in \cite{delay} for the case of  {\it subsonic} flow velocities.  Noting that the attracting set  above is obtained only for the structural dynamics (via the utilization of the reduction result), we show that, indeed, the full flow-plate dynamics exhibit strong convergence properties. Specifically, we demonstrate that with damping  on the surface of the plate full flow-plate trajectories converge strongly to the set of stationary points of the dynamical system generated by solutions. We emphasize that our results {\em require only frictional and static damping in the structure} and {\em do not make use of advantageous parabolic effects}. Our result provides further physical insight to the panel flutter problem; in \cite{delay} the flow, while driving the plate dynamics, also contributes a stabilizing effect to the plate dynamics as well. Our present result indicates that for panels in subsonic flow, strong stability of the plate (via mechanical damping) can be {\em transferred} to the flow, in some sense. This is in agreement with experimental and numerical studies wherein {\em divergence} (or `buckling') of panels is observed for subsonic flow velocities, in contrast to chaotic or periodic behavior ({\em i.e., flutter}) in the case of supersonic flow velocities.
We quote from a recent survey article authored by E. Dowell \cite{dowellrecent} with regard to subsonic flows: ``...if the trailing edge, as well as the leading edge, of the panel is fixed then divergence (a static aeroelastic instability) will occur rather than flutter. Panel divergence is a form of aeroelastic buckling..."
In our analysis below, the subsonic nature of the flow is critical, as it provides a viable energy balance equation. This energy balance is not presently available for weak solutions to \eqref{flowplate} when $U>1$, and the energy identity is necessarily polluted by nondissipative terms. In fact, this very issue is what kept  well-posedness of energy solutions to the supersonic flow-plate interaction open until very recently  \cite{supersonic}.

 In view of this, the distinct feature of our work is to utilize the results of \cite{delay,fereisel} to show that if sufficiently large damping is considered in the plate, the full flow-plate trajectories converge to the set of stationary points of the flow-plate system. We require a ``good" energy relation (present in the case of subsonic dynamics) which provides finiteness of the {\em dissipation integral}).
 Our overall approach is very much informed by the earlier work in \cite{springer,ryz,ryz2} (and references therein), however there are multiple technical hurdles which prevent these older considerations from being applicable. In order to utilize the earlier work (and account for the loss of regularity of the plate velocity) we must operate on regularized flow-plate trajectories. After showing the desired convergence properties for regular trajectories, we must then pass these (via an approximation argument) to energy level initial data in the state space.
\subsection{Notation}
For the remainder of the text we write $\xb$ for $(x,y,z) \in \realsthree_+$ or $(x,y) \in \Omega \subset \realstwo_{\{(x,y)\}}$, as dictated by context. Norms $\|\cdot\|$ are taken to be $L_2(D)$ for the domain $D$. The symbols $\nu$ and $\tau$ will be used to denote the unit normal and unit tangent vectors to a given domain, again, dictated by context. Inner products in $L_2(\realsthree_+)$ are written $(\cdot,\cdot)$, while inner products in $L_2(\R^2\equiv\partial\R^3_+)$ are written $\left<\cdot,\cdot\right>$. Also, $ H^s(D)$ will denote the Sobolev space of order $s$, defined on a domain $D$, and $H^s_0(D)$ denotes the closure of $C_0^{\infty}(D)$ in the $H^s(D)$ norm
which we denote by $\|\cdot\|_{H^s(D)}$ or $\|\cdot\|_{s,D}$.  When $s =0 $ we may abbreviate the notation
to $\| \cdot \| $. We make use of the standard notation for the trace of functions defined on $\realsthree_+$, i.e., for $\phi \in H^1(\realsthree_+)$, $tr[\phi]=\phi \big|_{z=0}$ is the trace of $\phi$ on the plane $\{\xb:z=0\}$. (We use analogous notation for $tr[w]$ as the trace map from $\Omega$ to $\partial \Omega$.)

\subsection{PDE Description of the model}

The gas flow environment we consider is $\realsthree_+=\{(x,y,z): z > 0\}$. The plate
 is  immersed in an inviscid  flow (over body) with velocity $U \neq 1$ in the  $x$-direction. (Here we normalize $U=1$ to be Mach 1, i.e., $0 \le U <1$ is subsonic and $U>1$ is supersonic.)
The plate is modeled by  a bounded domain $\Omega \subset \reals^2_{\{(x,y)\}}=\{(x,y,z): z = 0\}$ with smooth boundary $\partial \Omega = \Gamma$ and
the scalar function $u: \Omega \times \R_+ \to \reals$ represents the transverse displacement of the plate in the $z$-direction at the point $(x,y)$ at the moment $t$.
We focus on the panel configuration as it is physically relevant and friendly to mathematical exposition. Current work is being undertaken to investigate the model presented below in the case of other plate and flow boundary conditions \cite{fereisel,websterlasiecka}.

We then  consider  the following general form of  plate equation with internal  nonlinear forcing $f(u)$, external excitation $p(x,t)$, and potential damping
$ku_t$ with nonnegative  coefficients $k$ and $\beta$:
\begin{equation}\label{plate}\begin{cases}
u_{tt}+\Delta^2u+ku_t+\beta u+f(u)= p(\xb,t) ~~ \text { in } ~\Omega\times (0,T), \\
u=\Dn u = 0  ~~\text{ on } ~ \partial\Omega\times (0,T),  \\
u(0)=u_0,~~u_t(0)=u_1.
\end{cases}
\end{equation}

\noindent \textbf{Nonlinearity}:
\noindent We consider the von Karman nonlinearity, based upon the assumptions of {\em finite elasticity} and {\em maintained orthogonality of the plate filaments} \cite{lagnese}:
\begin{equation}\label{karman}
 f(u)= f_V(u)=-[u, v(u)+F_0],
\end{equation}
where $F_0$ is a given in-plane load,
the von Karman bracket $[u,v]$  is given by:
\begin{equation*}%\label{bracket}
[u,v] = \partial_{xx} u\partial_{yy} v +
\partial_{yy} u\partial_{xx} v -
2\partial_{xy} u\partial_{xy}
v,
\end{equation*} and
the Airy stress function $v(u)$ is defined  by the relation $v(u)=v(u,u)$ where  $v(u,w)$
 solves the following  elliptic problem
\begin{equation}\label{airy-1}
\Delta^2 v(u,w)+[u,w] =0 ~~{\rm in}~~  \Omega,\quad \Dn v(u,w)= v(u,w) =0 ~~{\rm on}~~  \partial\Omega,
\end{equation}
for given $u,w\in H^2_0(\Omega)$.
\begin{remark}
Another nonlinearity of interest is that of {\em Berger's} nonlinearity $f_V$. This is a valid physical approximation of the von Karman nonlinearity when the panel is clamped or hinged, and takes the form
$$f_B(u) = [\Upsilon-\kappa ||\nabla u||^2]\Delta u,$$ for $\kappa \ge 0$ and $\Upsilon$ bounded below, both physical parameters. The Berger nonlinearity satisfies all of the same key estimates as the von Karma nonlinearity, and, as such, the results we present here also hold when $f_V$ is replaced by $f_B$.
\end{remark}
\vskip.2cm
\noindent \textbf{Damping}: Specific assumptions will be imposed on the size of the damping
and will  depend on the type of result to be obtained. These will be given later.
%; we will need to take $k_0$ and $\beta_0$ sufficiently large (depending on the intrinsic parameters of the problem) at latter points of the treatment. These minimal damping coefficients $k_0$  and $\beta_0$ will play a crucial role in {\em some} of the arguments---we are careful to point out where we may relax assumptions on these quantities.

In full generality, the term $k u_t$ could be replaced by $k(\xb)g(u_t)$ for $k(\xb)\in L_{\infty}(\Omega)$ and $g \in C(\reals)$ some monotone damping function with further constraints, as in \cite{springer, Lu}.

\begin{remark}\label{inertia}
We pause here to mention that in many investigations of nonlinear plates so called {\em rotational inertia} in the filaments of the plate is taken into account; this effect is encompassed in a term $-\alpha\Delta u_{tt}$, $\alpha >0$ appearing in the LHS of the plate equation, where $\alpha$ is proportional to the thickness of the plate squared. When $\alpha =0$ we are referring to the non-rotational plate, as described above. This rotational term is mathematically advantageous, as its presence has a regularizing effect on the plate velocity at the level of finite energy; this in turn produces an additional measure of compactness into the model. The von Karman nonlinearity discussed herein acts {\em compactly on the finite energy space} for $\alpha>0$. Here, we seek to prove convergence results for the dynamics without this  term---this constitutes the ``appropriate" model for panel flutter \cite{dowell1}.
\end{remark}

For the flow component of the model, we make use of linear
 potential theory \cite{bolotin,dowell1} and the (perturbed) flow potential $\phi:\realsthree_+ \rightarrow \reals$ which satisfies the equation below:
\begin{equation}\label{flow}\begin{cases}
(\partial_t+U\partial_x)^2\phi=\Delta \phi & \text { in } \realsthree_+ \times (0,T),\\
\phi(0)=\phi_0;~~\phi_t(0)=\phi_1,\\
\Dn \phi = d(\xb,t)& \text{ on } \realstwo_{\{(x,y)\}} \times (0,T).
\end{cases}
\end{equation}
The strong coupling here takes place in the  downwash term of the flow potential (the Neumann boundary condition) by taking $$d(\xb,t)=-\big[(\partial_t+U\partial_x)u (\xb)\big]\cdot \mathbf{1}_{\Omega}(\xb),~~~
\xb\in\R^2,
$$
and by taking  the aerodynamical pressure in \eqref{plate}
of the form
\begin{equation}\label{aero-dyn-pr}
p(\xb,t)=p_0(\xb)+\big(\partial_t+U\partial_x\big)tr[\phi].
\end{equation}
Above, $\mathbf{1}_{\Omega}(\xb)$ denotes the indicator function of $\Omega$ in $\R^2$.
 This structure of $d(\xb,t)$  corresponds  to the case when the part of the  boundary $z=0$
 outside of the plate is
 the surface of a  rigid body.

This gives the fully coupled model:
\begin{equation}\label{flowplate}\begin{cases}
u_{tt}+\Delta^2u+ku_t+\beta u+f(u)= p_0+\big(\partial_t+U\partial_x\big)tr[\phi] & \text { in } \Omega\times (0,T),\\
u(0)=u_0;~~u_t(0)=u_1,\\
u=\Dn u = 0 & \text{ on } \partial\Omega\times (0,T),\\
(\partial_t+U\partial_x)^2\phi=\Delta \phi & \text { in } \realsthree_+ \times (0,T),\\
\phi(0)=\phi_0;~~\phi_t(0)=\phi_1,\\
\Dn \phi = -\big[(\partial_t+U\partial_x)u (\xb)\big]\cdot \mathbf{1}_{\Omega}(\xb) & \text{ on } \realstwo_{\{(x,y)\}} \times (0,T).
\end{cases}
\end{equation}

\subsection{Energies, state space, and well-posedness}\label{energies0}
As we are considering the subsonic case $U \in [0,1)$, we may resort to known theory
with  standard velocity multipliers and boundary conditions to derive the energy. This procedure leads to an energy which is bounded from below, given in Lemma \ref{energybound}. See \cite{springer,jadea12,webster} for more details.
In this case, we have  {\it the flow} and {\it interactive } energies given, respectively, by
\begin{align*}
 E_{fl}(t) = & \dfrac{1}{2}\big[\|\phi_t\|_{\R^3_+}^2-U^2\|\partial_x\phi\|_{\R^3_+}^2+\|\nabla \phi\|_{\R^3_+}^2\big],~~
 E_{int}(t)=2U\lb tr[\phi],u_x\rb_{\Omega}
\end{align*}
The  {\it plate energy} is defined as usual:
\begin{align}\label{energies}
E_{pl}(t) =& \dfrac{1}{2}\big[\|u_t\|_{\Omega}^2+ \|\Delta u\|_{\Omega}^2 \big] +\Pi(u)+\dfrac{\beta}{2}\|u\|^2,
\end{align}
\noindent $\Pi(u)$ is a potential of the nonlinear and nonconservative forces  given
by
 \begin{equation}
 \Pi(u)=\Pi_V(u)=\dfrac{1}{4}\|\Delta v(u)\|_{\Omega}^2-\dfrac{1}{2} \lb[u,u],F_0\rb_{\Omega}-\lb p,u\rb_{\Omega}
 \end{equation}
corresponding to the von Karman nonlinearity. Additionally, we note that the {\em static damping} produces the conserved quantity $\beta \|u\|^2$, with $\beta$ a parameter that will be determined at specific points in the arguments below.

The total energy  then is defined as a sum of the three components $${\cE}(t) = E_{fl}(t) + E_{pl}(t) +  E_{int}(t)  $$ and satisfies
 \begin{equation}\label{enident} \ds {\cE}(t)+\int_s^t\int_{\Omega} k |u_t|^2 d\Omega d\tau = {\cE}(s).\end{equation} The above quantities provide us with the finite energy considerations for the model, and hence the appropriate functional setup for well-posedness of weak and strong solutions. In short, we may say that weak solutions  satisfy the variational relations associated to \eqref{flowplate}. Generalized solutions are, by definition, strong limits of strong solutions to \eqref{flowplate}; however, in practice, we rely on the theory of semigroups and in this way generalized solutions are viewed as semigroup solutions to \eqref{flowplate}. Generalized solutions then satisfy an integral formulation of \eqref{flowplate} and are called {\em mild} by some authors. We now provide the technical definition of solutions which will be needed below:

A pair of functions $\big(u(x,y;t), \phi(x,y,z;t)\big)$ such that
\begin{equation}\label{platereq}
u(\xb,t) \in C([0,T]; H_0^2(\Omega))\cap C^1([0,T];L_2(\Omega)),\end{equation}
\begin{equation}\label{flowreq}
\phi(\xb,t) \in C([0,T]; H^1(\realsthree_+))\cap C^1([0,T];L_2(\realsthree_+)) \end{equation} is said to be a strong solution to \eqref{flowplate} on $[0,T]$ if
\begin{itemize}
\item $(\phi_t;u_t) \in L_1(a,b; H^1(\realsthree_+)\times H_0^2(\Omega))$ for any $(a,b) \subset [0,T]$.
\item $(\phi_{tt};u_{tt}) \in L_1(a,b; L_2(\realsthree_+)\times L_2(\Omega))$ for any $(a,b) \subset [0,T]$.
\item $\phi(t)\in H^2(\realsthree_+)$ and $\Delta^2 u(t) \in L_2(\Omega)$ for almost all $t\in [0,T]$.
\item The equation $$ u_{tt}+\Delta^2 u+ku_t +\beta u+f(u)=p(\xb,t)$$ holds in $L_2(\Omega)$ for almost all $t >0$.
\item The equation $$(\partial_t + U\partial_x)^2\phi=\Delta \phi$$ holds in $L_2(\realsthree_+)$ for almost all $t>0$ and almost all $\xb \in \realsthree_+$.
\item The boundary conditions in \eqref{flowplate} hold for almost all $t\in [0,T]$ and for almost all $\xb \in \partial \Omega$, $\xb \in \realstwo$ respectively.
\item The initial conditions are satisfied pointwise; that is $$\phi(0)=\phi_0, ~~\phi_t(0)=\phi_1, ~~u(0)=u_0, ~~u_t(0)=u_1.$$
\end{itemize}

As stated above, generalized solutions are strong limits of strong solutions; these solutions will correspond to semigroup solutions for an initial datum outside of the domain of the generator.
\vskip.25cm
\noindent\textbf{Generalized solutions}

\noindent A pair of functions $\big(u(x,y;t), \phi(x,y,z;t)\big)$ is said to be a generalized solution of the problem \eqref{flowplate} on the interval $[0,T]$ if (\ref{platereq}) and (\ref{flowreq}) are satisfied and there exists a sequence of strong solutions $(\phi_n(t);u_n(t))$ such that
$$\lim_{n\to \infty} \max_{t \in [0,T]} \Big\{\|\partial_t\phi-\partial_t \phi_n(t)\|_{L_2(\realsthree_+)}+\|\phi(t)-\phi_n(t)\|_{H^1(\realsthree_+)}\Big\}=0$$ and
$$\lim_{n \to \infty} \max_{t \in [0,T]} \Big\{\|\partial_t u(t)-\partial_t u_n(t)\|_{L_2(\Omega)} + \|u(t) - u_n(t)\|_{H_0^2(\Omega)}\Big\}=0.$$

 Owing to the natural requirements on the functions above, we name our state space $$Y=Y_{pl}\times Y_{fl} = H_0^2(\Omega)\times L_2(\Omega)\times H^1(\realsthree_+)\times L_2(\realsthree_+).$$ Additionally, due to the structure of the spatial operator in the flow equation we topologize $Y_{fl}$ with the seminorm (corresponding to Section \ref{energies0}) $$\|(\phi_0,\phi_1)\|_{Y_{fl}}^2 = \|\nabla \phi_0\|_{L_2(\realsthree_+)}^2+\|\phi_1\|^2_{L_2(\realsthree_+)}.$$
On the finite time horizon, working with semigroup solutions, the seminorm on $Y_{fl}$ recovers the full $H^1(\realsthree_+)$ norm, owing to the hyperbolic structure of the flow equation. Indeed,
 \begin{equation}\label{hypertime}\|\phi\|_{L_2(\realsthree)} \le \|\phi_0\|_{L_2(\realsthree_+)} + \int_0^T \|\phi_t(\tau)\|_{L_2(\Omega)}d\tau.\end{equation} The previous considerations \cite{jadea12,supersonic,webster} have made critical use of this fact to obtain {\em invariance} with respect to the topology of the energy space $Y$ (associated to semigroup generation on $Y$), where we have taken the gradient seminorm on $H^1(\realsthree_+)$.
 As we are working with convergence of flow solutions in this treatment, the topology of the flow space will be a paramount consideration. As evidenced above, the $L_2(\realsthree_+)$ norm is not controlled by the gradient norm (see Remark \ref{flownorm} below).

 We assume for well-posedness that $0\le U <1$ (subsonic), $k\ge 0$, $\beta \ge 0$, and $p_0 \in L_2(\Omega)$ with $F_0 \in H^{4}(\Omega)$.
\begin{theorem}[{\bf Nonlinear semigroup}]
\label{nonlinearsolution} For all $~T>0$, \eqref{flowplate} has a unique strong (resp. generalized---and hence variational \cite{jadea12, webster}) solution on $[0,T]$  denoted by $S_t (y_0) $. (In the case of strong solutions, the natural compatibility condition must be in force on the data
$ \Dn \phi_0 = -\mathbf{1}_{\Omega}(u_1+Uu_{0x})$.)

 This is to say that $(S_t, Y) $ is a (nonlinear) dynamical
system on $Y$.
 Both strong and generalized  solutions satisfy \begin{equation}\label{eident}\ds {\mathcal{E}}(t)+k\int_s^t \int_{\Omega} |u_t(\tau)|^2 d\Omega d\tau= {\mathcal{E}}(s)\end{equation} for $t>s$.
Moreover, this solution is uniformly bounded in time in the norm of the state space $Y$.
This means that there exists a constant $C$ such that for all $ t \geq 0 $ we have $$ \|S_t (y_0)\|_Y \leq C \left(\|y_0\|_Y\right).$$
\end{theorem}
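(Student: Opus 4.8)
\quad The plan is to recast \eqref{flowplate} as a first-order abstract Cauchy problem $y_t=\cA y+\mathcal{F}(y)$, $y(0)=y_0$, on the state space $Y$, where $y=(u,u_t,\phi,\phi_t)$. The operator $\cA$ is linear and unbounded and must incorporate \emph{all} of the coupling---both the downwash $-\mathbf{1}_{\Omega}(\Dt+U\Dx)u$ in the flow Neumann condition and the aerodynamic pressure $(\Dt+U\Dx)tr[\phi]$ in the plate equation---while $\mathcal{F}(y)=(0,\,-f(u)-\beta u-ku_t+p_0,\,0,\,0)$ carries the von Karman force and the remaining lower-order terms. The first step is to invoke the linear theory of \cite{jadea12,webster,springer}: for subsonic $U\in[0,1)$, $\cA$ generates a $C_0$-semigroup on $Y$ (equipped, as in Section \ref{energies0}, with the flow gradient seminorm). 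This rests on dissipativity of $\cA$ in the energy inner product attached to $\cE$ together with the maximality/range condition; dissipativity is precisely the cancellation, obtained upon testing the plate equation with $u_t$ and the flow equation with $\phi_t$ and integrating by parts over $\realsthree_+$, that produces the interactive energy $E_{int}$ and leaves only $-k\|u_t\|^2$. Crucially, this means the hyperbolic Neumann trace---for which the uniform Lopatinski condition fails and which costs $1/3$ of a derivative---is never estimated in isolation.

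Second, I would treat $\mathcal{F}$ as a perturbation that is Lipschitz on bounded subsets of $Y$. The map $u\mapsto f(u)=-[u,v(u)+F_0]$ is locally Lipschitz from $H_0^2(\Omega)$ into $L_2(\Omega)$: the Airy solver \eqref{airy-1} is regularizing and the von Karman bracket obeys the standard bilinear bounds (see \cite{springer} and references therein), and adjoining the bounded terms $ku_t,\ \beta u,\ p_0$ preserves this. Hence, by the variation-of-parameters/contraction argument of nonlinear semigroup theory, each $y_0\in Y$ admits a unique generalized (mild) solution $S_ty_0$ on a maximal interval; when $y_0\in D(\cA)$---equivalently, when the data satisfy the compatibility condition $\Dn\phi_0=-\mathbf{1}_{\Omega}(u_1+Uu_{0x})$ together with the interior regularity---this solution is strong, and general generalized solutions arise as $Y$-limits of strong ones, hence are also variational in the sense of \cite{jadea12,webster}.

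Third, to obtain globality, the energy identity, and the uniform bound, I would derive \eqref{eident} first for strong solutions---by the testing procedure above, which gives $\frac{d}{dt}\big[E_{fl}+E_{pl}+E_{int}\big]=-k\|u_t\|^2$---and then pass it to generalized solutions by density. Since $k\ge0$ this makes $t\mapsto\cE(t)$ nonincreasing. Invoking Lemma \ref{energybound}---which for $U<1$ supplies the lower bound on $\cE$ and its equivalence, modulo lower-order terms, to $\|\cdot\|_Y^2$, using $|E_{int}|\le\epsilon(E_{fl}+E_{pl})+C_\epsilon$ and the bound $\Pi_V(u)\ge-\epsilon\|\Delta u\|^2-C$---yields $\|S_ty_0\|_Y^2\le C(\cE(0))=C(\|y_0\|_Y)$ for every $t\ge0$. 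In particular the $Y$-norm cannot blow up in finite time, so the maximal interval is $[0,\infty)$; global existence and the asserted uniform bound follow, and the semigroup identity $S_{t+s}=S_tS_s$ is immediate from uniqueness. I expect the main obstacle to be the very first step: because of the low regularity of the Neumann map the coupling cannot be handled perturbatively, so everything hinges on dissipativity of the \emph{coupled} linear operator in the energy metric and on that metric being coercive on $Y$ (up to compact lower-order terms) precisely by virtue of subsonicity---the ingredients imported from \cite{jadea12,webster}.
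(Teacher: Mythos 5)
Your outline is essentially the approach the paper relies on: the paper does not prove Theorem \ref{nonlinearsolution} itself but defers to \cite{webster,jadea12}, where exactly this program is carried out---generation of the coupled linear semigroup on $Y$ with the gradient seminorm (recovering the full $H^1$ norm via \eqref{hypertime}), Ball's/variation-of-parameters treatment of the locally Lipschitz von Karman term via sharp Airy regularity, and globality plus the uniform bound from the energy identity combined with Lemma \ref{energybound} (i.e., Lemma \ref{l:epsilon}). Your sketch is consistent with that route and contains no substantive gap at this level of detail.
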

A detailed proof of the well-posedness results above utilizing semigroup theory, along with discussion,  can be found in \cite{webster}; more recently, a complete study of the trace regularity of solutions can be found in \cite{jadea12} (along with a  proof of well-posedness which makes use of a viscosity approach).

In order to describe the dynamics of the flow in the context of a long time behavior (Remark \ref{flownorm} below), it is necessary to introduce local space for the flow denoted by $\widetilde Y_{fl}$. Convergence in this space is given by convergence with respect to
$$\|(\phi_0,\phi_1)\|_{Y_{fl},\rho}\equiv \int_{ K_{\rho} } |\nabla \phi_0|^2  + |\phi_1|^2 d\xb,$$ {\em for all} $\rho >0$,
where $K_{\rho} \equiv \{ \xb \in R^3_{+}; |\xb |\leq \rho \} $.
By virtue of the Hardy inequality \cite[p.301]{springer}
$$\|\phi_0\|^2_{L_2(K_{\rho})} \le C_{\rho}\|\nabla \phi_0\|^2_{L_2(\realsthree_+)}$$ and hence
$$\|(\phi_0,\phi_1)\|_{Y_{fl},\rho}=  \|(\phi_0,\phi_1)\|_{H^1(K_{\rho})\times L_2(K_{\rho})}^2\le \|(\phi_0,\phi_1)\|_{Y_{fl}}^2.$$ 
\vskip.1cm
By the boundedness in Theorem \ref{nonlinearsolution}, the topology corresponding to $\widetilde Y_{fl}$ (i.e., in $Y_{fl,\rho}$ for any $\rho>0$) becomes a viable measure of long time behavior which we will refer to as the {\em local energy sense} and it is appropriate to take limits of the form ~$\displaystyle \lim_{t \to \infty} \|S_t(y_0)\|_{Y_\rho},~~$ where $\|\cdot \|_{Y_\rho} \equiv \|\cdot \|_{Y_{pl} \times Y_{fl,\rho}}$ and $S_t$ is the flow associated to the well-posedness in Theorem \ref{nonlinearsolution} above.

\begin{remark}\label{flownorm} We pause to further summarize and clarify the relation between the flow topologies: $H^1(\realsthree_+) \times L_2(\realsthree_+)$, $Y_{fl}$, and $Y_{fl,\rho}$. In all analyses, initial flow data is chosen in $H^1(\realsthree_+) \times L_2(\realsthree_+)$. For the well-posedness proof in \cite{webster} semigroup theory is utilized to show generation of a perturbed problem in the the topologies of $Y_{fl}$. Then, via the estimate in \eqref{hypertime}, generation of the original flow-plate problem can be recovered on $H^1(\realsthree_+) \times L_2(\realsthree_+)$ on any $[0,T]$. Solutions are global-in-time bounded in the topology of $Y_{fl}$ but not necessarily global-in-time bounded in the full $H^1(\realsthree_+) \times L_2(\realsthree_+)$ norm (owing to the contribution of the flow component). Hence, for considerations involving convergence of the flow $(\phi, \phi_t)$ as $t \to \infty$, we require a localized perspective, such that we have boundedness in the flow energy topology $Y_{fl}$ and {\em local compactness results}; when restricting to any ball $K_{\rho} \subset \realsthree_+$, this is the case. We also note that if a sequence $y_m \to y_0$ in $Y$ then $y_m \to y_0$ in $\widetilde Y$. \end{remark}

We also introduce the overall dynamics operator $\bT:\mathscr{D}(\bT)\subset Y \to Y$; in our setup, $\bT$ is the generator of the nonlinear semigroup $S_t(\cdot)$. For the sake of exposition we do not give the full structure of this operator (which involves introducing the spatial flow operator on $\realsthree_+$ and the corresponding Neumann map). Rather, we give reference to \cite{webster,jadea12,supersonic} for the details of the abstract setup of the problem. We suffice to say that, via semigroup methods presented in those references, Ball's method provides the generator of the nonlinear semigroup with appropriate (dense) domain $\cD(\bT)$. The key property necessary in this treatment is that \begin{equation}\label{domainprop}\cD(\bT) \hookrightarrow (H^4\cap H_0^2)(\Omega)\times H_0^2(\Omega) \times H^2(\realsthree_+)\times H^1(\realsthree_+). \end{equation} Specifically, the discussion of the generator of the {\em linear dynamics} is described in \cite[p. 3129]{webster}; the  contribution of the von Karman nonlinearity is inert in the characterization of the nonlinear generator $\bT$  due to {\it sharp regularity of Airy's stress function} (see the regularity properties of stationary solutions to the von Karman equations \cite[Theorem 1.5.7]{springer}).
(We are again careful to note that invariance of $\mathscr D(\mathbb T)$ under the semigroup is with respect to the topology of $Y$ or $\widetilde Y$ on the finite time horizon. However, in the limit as $t \to \infty$ we consider the topology of $\widetilde Y$.)

The final result we will need corresponds to the boundedness (from below) of the nonlinear energy.  These bounds are necessary to obtain the boundedness of the semigroup quoted in Theorem \ref{nonlinearsolution} above. We will denote the positive part of the energy $\cE$ as follows:
\begin{equation}
\mathbf{E}(t)\! =\! \frac{1}{2}\left\{\|u_t\|_{\Omega}^2\!+\!\|\Delta u\|_{\Omega}^2\!+\!\beta\|u\|^2\!+\!\|\nabla \phi\|_{\realsthree_+}^2\!-\!U^2\|\partial_x\phi\|^2\!+\!\|\phi_t\|_{\realsthree_+}^2 \right\} +\frac{1}{4}\|\Delta v(u)\|_{\Omega}^2  \end{equation}
First, we have the following bound \cite[Lemma 5.2, p.3136]{webster} :
\begin{lemma}\label{energybound}
For generalized solutions to \eqref{flowplate}, there exist positive constants $c,C,$ and $M$ positive such that
\begin{equation}  c \mathbf E(t)-M_{p_0,F_0} \le \cE(t) \le C \mathbf E(t)+M_{p_0,F_0}  \end{equation}
\end{lemma}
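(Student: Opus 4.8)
The plan is to note first that the claimed two-sided bound is a pointwise-in-time inequality involving only the instantaneous state $(u,u_t,\phi,\phi_t)$ --- it does not use the evolution --- so it suffices to verify it for an arbitrary $(u,u_t,\phi,\phi_t)\in Y$. On such a state the trace $tr[\phi]\big|_{\Omega}$ is well defined, and --- crucially, since $\mathbf{E}$ controls only the flow seminorm $\|\nabla\phi\|_{\realsthree_+}$ --- the Hardy inequality recalled above yields $\|tr[\phi]\|_{L_2(\Omega)}\le C_{\Omega}\|\nabla\phi\|_{\realsthree_+}$. Writing $\Pi_*(u):=\Pi_V(u)-\tfrac14\|\Delta v(u)\|_{\Omega}^2=-\tfrac12\lb[u,u],F_0\rb_{\Omega}-\lb p_0,u\rb_{\Omega}$ for the indefinite, lower-order part of the plate potential, the definitions of Section~\ref{energies0} give the identity
\begin{equation*}
\cE(t)=\mathbf{E}(t)+E_{int}(t)+\Pi_*(u(t)),\qquad E_{int}(t)=2U\lb tr[\phi],u_x\rb_{\Omega},
\end{equation*}
so the whole matter reduces to estimating the two remainder terms $E_{int}$ and $\Pi_*$ against $\mathbf{E}$.

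Next I would assemble three ingredients. (i) From the trace bound above, the Poincar\'e-type estimate $\|u_x\|_{\Omega}\le\|\nabla u\|_{\Omega}\le C\|\Delta u\|_{\Omega}$, and Young's inequality, one gets, for every $\eta>0$,
\begin{equation*}
|E_{int}(t)|\le \eta\,\|\nabla\phi\|^2_{\realsthree_+}+C_\eta\,\|\Delta u\|^2_{\Omega}.
\end{equation*}
(ii) \emph{Subsonic coercivity}: because $U<1$, $\|\nabla\phi\|^2_{\realsthree_+}-U^2\|\partial_x\phi\|^2\ge(1-U^2)\|\nabla\phi\|^2_{\realsthree_+}\ge0$, so the flow part of $\mathbf{E}$ is bounded below by $\tfrac12\|\phi_t\|^2_{\realsthree_+}+\tfrac{1-U^2}{2}\|\nabla\phi\|^2_{\realsthree_+}$; this positivity --- and with it the subsonic hypothesis --- is indispensable. (iii) \emph{Sharp von Karman estimates}: the regularity of the Airy stress function gives $\|\Delta v(u)\|_{\Omega}\le C\|u\|^2_{H^2(\Omega)}$, and --- using $F_0\in H^4(\Omega)$, the bracket symmetry $\lb[u,u],F_0\rb=\lb[u,F_0],u\rb$, an integration by parts (whose boundary terms vanish because $u$ is clamped), and the embedding $H^2(\Omega)\hookrightarrow L_{\infty}(\Omega)$ valid in two dimensions --- one obtains, for every $\eta>0$, the subordinate bound $|\lb[u,u],F_0\rb_{\Omega}|\le\eta\,\|\Delta u\|^2_{\Omega}+C_{\eta}\,\|u\|^2_{\Omega}$ (with $C_\eta$ also depending on $\|F_0\|_{H^4(\Omega)}$); together with $|\lb p_0,u\rb_{\Omega}|\le\eta\|u\|^2_{\Omega}+C_\eta\|p_0\|^2$ this gives $|\Pi_*(u)|\le\eta\,\|\Delta u\|^2_{\Omega}+C_{\eta}\big(1+\|u\|^2_{\Omega}\big)$.

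The upper bound $\cE(t)\le C\,\mathbf{E}(t)+M_{p_0,F_0}$ is then immediate, since by (i)--(iii) and Poincar\'e one has $|E_{int}|+|\Pi_*|\lesssim\|\nabla\phi\|^2_{\realsthree_+}+\|\Delta u\|^2_{\Omega}+\|u\|^2_{\Omega}+1\lesssim\mathbf{E}(t)+1$. For the lower bound $c\,\mathbf{E}(t)-M_{p_0,F_0}\le\cE(t)$ one must instead produce an estimate $|E_{int}(t)+\Pi_*(u(t))|\le a\,\mathbf{E}(t)+M$ with $a<1$: I would choose $\eta$ in (i) and (iii) small enough that the accumulated coefficients of $\|\nabla\phi\|^2_{\realsthree_+}$ and of $\|\Delta u\|^2_{\Omega}$ stay strictly below the coefficients $\tfrac{1-U^2}{2}$ and $\tfrac12$ with which these terms enter $\mathbf{E}$, and then absorb the residual lowest-order term $C\|u\|^2_{\Omega}$ either into $\tfrac{\beta}{2}\|u\|^2_{\Omega}$ (when $\beta>0$) or, via Poincar\'e, into the leftover part of $\tfrac12\|\Delta u\|^2_{\Omega}$; the constants $c>0$ and $M_{p_0,F_0}$ then depend only on $U$, $\beta$, $\Omega$, $\|p_0\|$ and $\|F_0\|_{H^4(\Omega)}$. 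I expect the genuine obstacle to be the interactive energy $E_{int}$: unlike the von Karman remainder, it lives \emph{at} the finite-energy level rather than strictly below it, so its absorption into $\mathbf{E}$ succeeds only because the subsonic gap $(1-U^2)\|\nabla\phi\|^2_{\realsthree_+}$ leaves genuine room in the flow energy, and the free parameters must be calibrated carefully against $U$ and the geometry of $\Omega$.
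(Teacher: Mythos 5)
Your decomposition $\cE=\mathbf E+E_{int}+\Pi_*$, the Hardy-trace estimate for $E_{int}$, and the use of the subsonic gap $(1-U^2)\|\nabla\phi\|^2_{\realsthree_+}$ are all in line with the argument the paper sketches (the detailed proof is delegated to \cite{webster}), and your upper bound goes through. The genuine gap is in the last step of the lower bound. After splitting $E_{int}$ and the $F_0$, $p_0$ terms you are left with $C\|u\|^2_{\Omega}$ where $C$ is \emph{large}: it contains the factor $U/\delta$ forced by absorbing $U\delta\|\nabla\phi\|^2$ into $\tfrac{1-U^2}{2}\|\nabla\phi\|^2$ (so it blows up as $U\to 1$), the interpolation constant $C_\epsilon$ from $\|u_x\|^2\le\epsilon\|\Delta u\|^2+C_\epsilon\|u\|^2$, and $\|F_0\|_{H^4(\Omega)}$. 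You propose to absorb this either into $\tfrac{\beta}{2}\|u\|^2$ or ``via Poincar\'e into the leftover part of $\tfrac12\|\Delta u\|^2$.'' Neither works: $\beta$ is permitted to be zero (and is a fixed datum, not a parameter at your disposal), and Poincar\'e runs in the wrong direction --- it gives $\|u\|^2\le C_\Omega\|\Delta u\|^2$ with a \emph{fixed} domain constant, so $C\|u\|^2\le C\,C_\Omega\|\Delta u\|^2$ with $C\,C_\Omega$ possibly far exceeding the leftover coefficient of $\|\Delta u\|^2$. No linear mechanism makes a large multiple of $\|u\|^2$ small relative to $\|\Delta u\|^2$.

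The missing ingredient is Lemma \ref{l:epsilon}: for $u\in H^2(\Omega)\cap H^1_0(\Omega)$ and every $\epsilon>0$, $\|u\|^2\le\epsilon[\|\Delta u\|^2+\|\Delta v(u)\|^2]+M_\epsilon$. This is a genuinely nonlinear estimate (maximum principle for the Monge--Amp\`ere equation, using $u=0$ on $\partial\Omega$); the quartic Airy term $\tfrac14\|\Delta v(u)\|^2$ sitting inside $\mathbf E$ is what dominates the low frequencies for large $u$, at the price of the additive constant $M_\epsilon$, which is then swept into $M_{p_0,F_0}$. Choosing $\epsilon$ small relative to the accumulated constant $C$ closes the lower bound for every $0\le U<1$. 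The paper flags exactly this point: the bound fails for the linearized model, and in the linear case one must instead take $U$ small --- which is precisely the regime in which your Poincar\'e absorption would suffice. As written, your argument proves the lower bound only for $U$ (and $\|F_0\|$) small, not in the generality claimed.
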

To obtain the above bounds we note that the interactive energy $E_{int}$ has the bound: \begin{equation} |E_{int}(t)| \le U\delta \|\nabla \phi\|_{\realsthree_+}^2+\frac{U}{\delta}\|u_x\|_{\Omega}^2, ~~\delta>0, \end{equation} which follows from the Hardy inequality (see, for instance, \cite[p. 301]{springer}). In the linear case ($f(u)=0$), taking $U$ sufficiently small will provide the boundedness in Lemma \ref{energybound} above. In the nonlinear case, where $f(u)$ is present, the lower bound for the energy given in the lemma depends on the boundary condition imposed on the plate.
The argument depends on the maximum principle for the Monge-Ampere equation and it employs the fact that $u$ is zero on the boundary. The precise statement of the lemma which controls lower frequencies is given \cite{springer}:
\begin{lemma}\label{l:epsilon}
For any $u \in H^2(\Omega) \cap H_0^1(\Omega) $ and   $\epsilon > 0 $ there exists $M_{\epsilon} $ such that
$$\|u\|^2 \leq \epsilon [\|\Delta u\|^2  + \|\Delta v(u) \|^2 ] + M_{\epsilon} $$
\end{lemma}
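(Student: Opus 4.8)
The plan is to argue by contradiction via a renormalization/compactness scheme, reducing the inequality to a rigidity statement for the degenerate Monge--Amp\`ere equation. Suppose the claim fails: then there are $\epsilon_0>0$ and a sequence $\{u_n\}\subset H^2(\Omega)\cap H^1_0(\Omega)$ with
$$\|u_n\|^2 > \epsilon_0\big(\|\Delta u_n\|^2 + \|\Delta v(u_n)\|^2\big) + n .$$
In particular $\|u_n\|^2\to\infty$. Set $w_n := u_n/\|u_n\|$, so $\|w_n\|=1$, and use the quadratic homogeneity $v(\lambda w)=\lambda^2 v(w)$ of the Airy map (immediate from \eqref{airy-1}) to obtain, after dividing by $\|u_n\|^2$,
$$1 > \epsilon_0\|\Delta w_n\|^2 + \epsilon_0\|u_n\|^2\|\Delta v(w_n)\|^2 + \frac{n}{\|u_n\|^2}.$$
Hence $\|\Delta w_n\|^2\le \epsilon_0^{-1}$ — so by $H^2$ elliptic regularity for the Dirichlet Laplacian on the smooth domain $\Omega$, $\{w_n\}$ is bounded in $H^2(\Omega)\cap H^1_0(\Omega)$ — while $\|\Delta v(w_n)\|^2\le (\epsilon_0\|u_n\|^2)^{-1}\to 0$.

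Next I would pass to the limit. Extract a subsequence with $w_n\rightharpoonup w$ in $H^2(\Omega)$; by Rellich's theorem $w_n\to w$ in $H^1(\Omega)$ (hence in $L_2(\Omega)$), so $w\in H^2(\Omega)\cap H^1_0(\Omega)$ and $\|w\|=1$, in particular $w\neq 0$. Since $\Delta v(w_n)\to 0$ in $L_2(\Omega)$ and $\Delta: L_2(\Omega)\to H^{-2}(\Omega)$ is bounded, $[w_n,w_n]=-\Delta^2 v(w_n)\to 0$ in $H^{-2}(\Omega)$. On the other hand, for $\varphi\in C_0^{\infty}(\Omega)$ the symmetry of the von Karman bracket gives $\langle [w_n,w_n],\varphi\rangle=\int_{\Omega}[w_n,\varphi]\,w_n\,d\Omega$; here $[w_n,\varphi]\rightharpoonup[w,\varphi]$ in $L_2(\Omega)$ (the second derivatives of $w_n$ converge weakly in $L_2$, those of $\varphi$ are fixed) while $w_n\to w$ strongly in $L_2(\Omega)$, so the product passes to the limit and $\langle[w_n,w_n],\varphi\rangle\to\langle[w,w],\varphi\rangle$. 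Comparing the two limits yields $[w,w]=0$ in $\mathcal{D}'(\Omega)$; equivalently the Monge--Amp\`ere relation $\det D^2 w=0$ holds a.e.\ in $\Omega$ (and, by uniqueness for the clamped biharmonic problem \eqref{airy-1}, $v(w)=0$).

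The final step — and the one I expect to be the main obstacle — is the rigidity: a function $w\in H^2(\Omega)\cap H^1_0(\Omega)$ with $\det D^2 w=0$ in $\Omega$ and $w=0$ on $\partial\Omega$ must vanish identically. This is exactly where the maximum principle for the Monge--Amp\`ere operator and the Dirichlet boundary condition enter (cf.\ \cite{springer}): zero Gauss curvature forces the graph of $w$ to be ruled, the rulings run from boundary to boundary, and $w$ is affine, hence zero, along each of them. This contradicts $\|w\|=1$, completing the argument. The renormalization step and the weak continuity of the von Karman bracket are by now routine in the plate literature; the degenerate Monge--Amp\`ere rigidity is the genuinely nonlinear, geometry-sensitive ingredient, and it is precisely what breaks down if the boundary condition $u\big|_{\partial\Omega}=0$ is dropped.
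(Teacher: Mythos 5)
Your scheme is sound in structure, and it uses precisely the two ingredients the paper attributes to the cited proof (the paper itself gives no argument, only a pointer to \cite{springer} together with the remark that the proof rests on the maximum principle for the Monge--Amp\`ere equation and on $u|_{\partial\Omega}=0$). The contradiction setup, the quadratic homogeneity $v(\lambda u)=\lambda^2v(u)$, the $H^2$-bound on $w_n$ from elliptic regularity plus $\|w_n\|=1$, the decay $\|\Delta v(w_n)\|\to 0$, and the limit passage $[w_n,w_n]\to[w,w]$ in $\mathcal{D}'(\Omega)$ via the symmetry of the trilinear form are all correct; the symmetry identity is legitimate here because $\varphi$ is compactly supported and $w_n\in H^1_0(\Omega)$, so the boundary terms in the divergence-form rewriting of the bracket vanish. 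This is the standard ``weak continuity of the von Karman bracket'' that the paper itself invokes elsewhere.

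The one step you should not treat as routine is the rigidity. The developable-surface argument (zero Gauss curvature $\Rightarrow$ ruled $\Rightarrow$ affine along rulings $\Rightarrow$ zero) is a Hartman--Nirenberg type theorem that requires $C^2$ regularity; its extension to $W^{2,2}$ functions is a genuinely deep result and is not the mechanism intended here. What \cite{springer} means by the ``maximum principle for the Monge--Amp\`ere equation'' is the Aleksandrov-type estimate
$$\max_{\bar\Omega}|w|^2 \le C(d)\int_\Omega |\det D^2 w|\,d\Omega,\qquad w\in H^2(\Omega)\cap H_0^1(\Omega),\ d=\mathrm{diam}\,\Omega,$$
valid precisely because $w$ vanishes on $\partial\Omega$; it yields $w\equiv 0$ instantly from $\det D^2w=0$ a.e.\ and contradicts $\|w\|=1$. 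With the rigidity step closed this way your proof is complete. Since the paper provides no proof of its own, I cannot certify that your compactness/renormalization route coincides with the quantitative argument in \cite{springer} (which produces an explicit $M_\epsilon$, whereas yours does not), but for the purposes of this paper only the existence of $M_\epsilon$ is used, so the softer argument suffices.
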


This accommodates both clamped and hinged boundary conditions imposed on the plate. See the discussion in \cite[Lemma 1.5.5]{springer} for more details. Utilizing this lemma, and the energy identity \eqref{eident}, is how one arrives at the global-in-time energy bound
\begin{lemma}\label{globalbound}
Any generalized (and hence weak) solution to \eqref{flowplate}, with $f(u)=f_V(u)$ satisfies the bound \begin{equation}
\sup_{t \ge 0} \left\{\|u_t\|_{\Omega}^2+\|\Delta u\|_{\Omega}^2+\|\phi_t\|_{\realsthree_+}^2+\|\nabla \phi\|_{\realsthree_+}^2 \right\}  \leq C\big(\|S_t(y_0)\|_Y\big)< + \infty.\end{equation}
\end{lemma}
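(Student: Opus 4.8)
The plan is to combine the energy identity \eqref{eident} with the two-sided comparison of Lemma \ref{energybound} and the low-frequency absorption of Lemma \ref{l:epsilon}. First I would observe that, since $k\ge 0$, the dissipation term in \eqref{eident} is nonnegative, so for every $t\ge 0$ we have $\cE(t)\le \cE(0)$. It therefore suffices to bound $\mathbf E(t)$ in terms of $\cE(t)$ uniformly, and then to bound $\cE(0)$ in terms of $\|y_0\|_Y$. The first of these is exactly the left inequality in Lemma \ref{energybound}: $c\,\mathbf E(t)-M_{p_0,F_0}\le \cE(t)$, whence
\begin{equation*}
\mathbf E(t)\le \frac{1}{c}\big(\cE(t)+M_{p_0,F_0}\big)\le \frac{1}{c}\big(\cE(0)+M_{p_0,F_0}\big).
\end{equation*}
For the second, apply the right inequality of Lemma \ref{energybound} at $t=0$: $\cE(0)\le C\mathbf E(0)+M_{p_0,F_0}$, and then estimate $\mathbf E(0)$ directly by $\|y_0\|_Y$ — all the quadratic terms in $\mathbf E$ are controlled by the $Y$-norm (using $U<1$ for the $\|\nabla\phi\|^2-U^2\|\partial_x\phi\|^2$ combination, which is comparable to $\|\nabla\phi\|^2$), and $\|\Delta v(u_0)\|^2\lesssim \|\Delta u_0\|^4$ by \eqref{airy-1} and the standard Airy-function estimate, hence is controlled by a continuous function of $\|y_0\|_Y$.

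Next I would extract from the uniform bound on $\mathbf E(t)$ precisely the quantity appearing in the statement. Since
\begin{equation*}
\mathbf E(t)\ge \tfrac12\big(\|u_t\|_\Omega^2+\|\Delta u\|_\Omega^2+\|\nabla\phi\|_{\realsthree_+}^2-U^2\|\partial_x\phi\|^2+\|\phi_t\|_{\realsthree_+}^2\big),
\end{equation*}
and since $0\le U<1$ gives $\|\nabla\phi\|^2-U^2\|\partial_x\phi\|^2\ge (1-U^2)\|\nabla\phi\|^2\ge 0$, we get
\begin{equation*}
\|u_t\|_\Omega^2+\|\Delta u\|_\Omega^2+\|\phi_t\|_{\realsthree_+}^2+(1-U^2)\|\nabla\phi\|_{\realsthree_+}^2\le 2\,\mathbf E(t),
\end{equation*}
which, dividing through by $\min\{1,1-U^2\}$, yields the claimed supremum bound with a constant depending only on $U$, $c$, $C$, $M_{p_0,F_0}$, and $\|y_0\|_Y$. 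The role of Lemma \ref{l:epsilon} is implicit: it is what makes the lower bound in Lemma \ref{energybound} genuine (the $\beta\|u\|^2$ and lower-order nonlinear terms do not a priori sit inside the positive part $\mathbf E$), so I would cite it only insofar as it is already baked into Lemma \ref{energybound}. Finally, since generalized solutions are strong limits of strong solutions and all quantities above are continuous in the $Y$-topology on finite time intervals, the bound passes from strong to generalized (and hence, via the variational characterization, to weak) solutions by the approximation in the definition of generalized solution; and weak solutions satisfy \eqref{eident} (or at least the energy inequality $\cE(t)\le\cE(s)$), which is all that was used.

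The only genuinely nontrivial input is the lower bound of Lemma \ref{energybound}, i.e., that the nonconservative and $\beta$-terms — in particular the indefinite term $-\tfrac12\langle[u,u],F_0\rangle$ and $-\langle p,u\rangle$ in $\Pi_V(u)$ — can be absorbed into the positive energy $\mathbf E$ up to an additive constant. This is where Lemma \ref{l:epsilon} and the maximum principle for the Monge–Ampère equation enter, but it is quoted from \cite{webster,springer}, so within the present argument there is no real obstacle: the proof is a short bookkeeping exercise chaining \eqref{eident}, Lemma \ref{energybound} at times $t$ and $0$, and the elementary estimate $\|\Delta v(u_0)\|\lesssim \|\Delta u_0\|^2$.
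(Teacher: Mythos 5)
Your proof is correct and follows exactly the route the paper intends: monotonicity of $\cE$ from the energy identity \eqref{eident}, the two-sided comparison of Lemma \ref{energybound} (whose lower bound encodes Lemma \ref{l:epsilon}), the estimate $\|\Delta v(u_0)\|\lesssim\|\Delta u_0\|^2$ from the sharp Airy regularity to control $\mathbf E(0)$ by $\|y_0\|_Y$, and the coercivity of $\|\nabla\phi\|^2-U^2\|\partial_x\phi\|^2$ for $U<1$. The paper itself only sketches this chain and defers the details to \cite{springer,webster,jadea12}, so your bookkeeping fills in precisely what is left implicit.
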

The above inequalities are proven and discussed in detail in \cite{springer, webster, jadea12}.
Finally, as a corollary to the energy identity for generalized solutions \eqref{eident} and Lemma \ref{energybound}, we obtain the following:

\begin{corollary}\label{dissint}
Let $ k> 0 $. Then the dissipation integral is finite. Namely, for a generalized solution to \eqref{flowplate} we have $$  \int_0^{\infty} \|u_t(t)\|_{0,\Omega}^2 dt \leq  K_u < \infty.$$
\end{corollary}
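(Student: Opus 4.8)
The plan is to read the claim directly off the energy identity \eqref{eident} together with the lower bound of Lemma \ref{energybound}; no new ideas are needed. First I would fix $s=0$ in \eqref{eident}---valid for generalized solutions by Theorem \ref{nonlinearsolution}---to obtain, for every $t>0$,
\begin{equation*}
k\int_0^t\int_\Omega |u_t(\tau)|^2\,d\Omega\,d\tau = \mathcal{E}(0)-\mathcal{E}(t).
\end{equation*}
The value $\mathcal{E}(0)$ is finite because $y_0\in Y$: the upper bound in Lemma \ref{energybound} gives $\mathcal{E}(0)\le C\mathbf{E}(0)+M_{p_0,F_0}<\infty$. Then I would bound $\mathcal{E}(t)$ from below uniformly in $t$: the lower bound in Lemma \ref{energybound} yields $\mathcal{E}(t)\ge c\,\mathbf{E}(t)-M_{p_0,F_0}$, and $\mathbf{E}(t)\ge 0$ since in the subsonic range $U\in[0,1)$ one has $\|\nabla\phi\|_{\realsthree_+}^2-U^2\|\partial_x\phi\|^2\ge (1-U^2)\|\partial_x\phi\|^2\ge 0$ while every remaining term of $\mathbf{E}$ is manifestly nonnegative. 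Hence $\mathcal{E}(t)\ge -M_{p_0,F_0}$ for all $t\ge 0$.

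Combining the two bounds,
\begin{equation*}
k\int_0^t\|u_t(\tau)\|_{0,\Omega}^2\,d\tau \le \mathcal{E}(0)+M_{p_0,F_0}\le C\mathbf{E}(0)+2M_{p_0,F_0},
\end{equation*}
with right-hand side independent of $t$. Since the integrand is nonnegative, the map $t\mapsto\int_0^t\|u_t(\tau)\|_{0,\Omega}^2\,d\tau$ is nondecreasing and bounded above; letting $t\to\infty$ (monotone convergence) gives $\int_0^\infty\|u_t(t)\|_{0,\Omega}^2\,dt\le K_u<\infty$, with $K_u$ depending on $k$, $\|y_0\|_Y$, $p_0$, and $F_0$. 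The hypothesis $k>0$ enters only to divide through, so $K_u\sim k^{-1}$.

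The only point deserving care---and the sole place subsonicity is used---is the nonnegativity of $\mathbf{E}(t)$, which prevents the energy from escaping to $-\infty$ and thereby keeps the accumulated dissipation bounded. One could equivalently invoke Lemma \ref{globalbound} to bound $\mathcal{E}(t)$ below directly, but routing through $\mathbf{E}(t)\ge 0$ is the cleanest path and makes the role of the ``good'' energy balance transparent. I do not anticipate any genuine obstacle for this corollary.
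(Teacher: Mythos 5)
Your proposal is correct and is precisely the argument the paper intends: the corollary is stated as an immediate consequence of the energy identity \eqref{eident} and the two-sided bound of Lemma \ref{energybound}, and your write-up simply fills in the (routine) details, including the observation that $\mathbf{E}(t)\ge 0$ for $U<1$ so that $\mathcal{E}(t)$ is bounded below uniformly in $t$. No discrepancy with the paper's reasoning.
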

\noindent This boundedness will be used critically in multiple places to obtain convergence of trajectories to stationary points.

\subsection{Stationary problem}
We now briefly state and discuss the stationary problem associated to \eqref{flowplate}, which has the form:
\begin{equation}\label{static}
\begin{cases}
\Delta^2u+\beta u+f(u)=p_0(\xb)+U\partial_x \phi & x \in \Omega\\
u=\Dn u= 0 & x \in \Gamma\\
\Delta \phi -U^2 \partial_x^2\phi=0 & x \in \realsthree_+\\
\Dn \phi = -\mathbf{1}_{\Omega}(\xb)\cdot U\partial_x u  & x \in \partial \realsthree_+
\end{cases}
\end{equation}
This problem has been studied before in the long-time behavior considerations for flow-plate interactions, most recently in \cite[Section 6.5.6]{springer}; in this reference, the following theorem is shown for subsonic flows (this is given as \cite[Theorem 6.5.10]{springer}):
\begin{theorem}\label{statictheorem}
Suppose $0 \le U <1$, $ k \geq 0, \beta \geq 0 $  and $f(u)=f_V(u)$, with $p_0 \in L_2(\Omega)$ and $F_0 \in H^{4}(\Omega)$. Then {\em weak} solutions $\left(u(\xb),\phi(\xb)\right)$ to \eqref{static} exist and satisfy the additional regularity property $$(u,\phi) \in (H^4\cap H_0^2)(\Omega) \times W_2(\realsthree_+)$$ where $W_k(\realsthree_+)$ is the homogeneous Sobolev space of the form $$W_k(\realsthree_+) \equiv \left\{\phi(\xb) \in L_2^{loc}(\realsthree_+) ~:~ \|\phi\|^2_{W_k}\equiv \sum_{j=0}^{k-1}\|\nabla \phi \|^2_{j,\realsthree_+} \right\}.$$
Moreover, the stationary solutions mentioned above correspond to the extreme points of the  `potential' energy functional $$D(u,\phi) = \frac{1}{2}\|\Delta u\|_{\Omega}^2+\beta\|u\|^2+\Pi(u)+\frac{1}{2}\|\nabla \phi\|_{\realsthree_+}^2-\dfrac{U^2}{2}\|\partial_x\phi\|_{\realsthree_+}^2 + U\lb\partial_x u, tr[\phi]\rb_{\Omega}$$ considered for $(u,\phi) \in H_0^2(\Omega) \times W_1(\realsthree_+)$. \end{theorem}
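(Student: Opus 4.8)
The plan is to treat the elliptic flow problem and the nonlinear plate problem as coupled via the variational structure, and exploit the fact that the whole system \eqref{static} is the Euler--Lagrange system of the functional $D(u,\phi)$. First I would establish existence: the flow component $\phi$ is essentially enslaved to $u$, since for fixed $u \in H_0^2(\Omega)$ the equation $\Delta\phi - U^2\partial_x^2\phi = 0$ in $\realsthree_+$ with $\Dn \phi = -\mathbf 1_{\Omega} U\partial_x u$ on the boundary is a linear elliptic problem (uniformly elliptic precisely because $U<1$). I would solve it by Lax--Milgram on the homogeneous Sobolev space $W_1(\realsthree_+)$, whose natural (semi-)norm is exactly $\|\nabla\phi\|^2_{\realsthree_+} - U^2\|\partial_x\phi\|^2_{\realsthree_+}$ up to the coercivity constant $1-U^2$; the boundary datum $U\partial_x u \in H^{1/2}(\Omega)$ (indeed in $H^1(\Omega)$ by the $H^4$ regularity we will eventually get, but $L_2$ suffices here) is a bounded functional against $tr[\phi]$ on $W_1(\realsthree_+)$ via the trace theorem and the Hardy inequality quoted in the excerpt. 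This gives a bounded solution map $u \mapsto \phi = \phi(u)$, linear in $u$.

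Next I would substitute $\phi = \phi(u)$ back into $D$, obtaining a reduced functional $\widetilde D(u) = \frac12\|\Delta u\|^2 + \beta\|u\|^2 + \Pi(u) + Q(u)$, where $Q(u) = \frac12\|\nabla\phi(u)\|^2 - \frac{U^2}{2}\|\partial_x\phi(u)\|^2 + U\langle\partial_x u, tr[\phi(u)]\rangle$ is a quadratic form in $u$. Using the weak form of the flow equation tested against $\phi(u)$ itself, one sees $\|\nabla\phi(u)\|^2 - U^2\|\partial_x\phi(u)\|^2 = -U\langle\partial_x u, tr[\phi(u)]\rangle$, so $Q(u) = \tfrac12 U\langle \partial_x u, tr[\phi(u)]\rangle \ge 0$ (nonnegativity again from $U<1$ and the sign of the bilinear form). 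Hence $\widetilde D$ differs from the standard von Karman static energy by a \emph{nonnegative} quadratic perturbation plus the lower-order static-damping term; coercivity on $H_0^2(\Omega)$ then follows from the sharp lower bound on $\Pi_V$ together with Lemma \ref{l:epsilon} (to absorb $-\langle p_0,u\rangle$ and the $F_0$ term), exactly as in the classical argument for the plate alone. Weak lower semicontinuity of $\widetilde D$ comes from compactness of the von Karman map $u \mapsto \Delta v(u)$ on $H_0^2(\Omega)$ (so $\Pi_V$ is weakly continuous) and continuity/convexity of the quadratic part $\frac12\|\Delta u\|^2 + Q(u)$. Therefore $\widetilde D$ attains a minimum, any critical point $u_*$ of $\widetilde D$ paired with $\phi_* = \phi(u_*)$ solves \eqref{static}, and conversely any weak solution of \eqref{static} is a critical point of $D$ — this is the ``extreme points'' assertion.

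For the regularity claim, once $u_* \in H_0^2(\Omega)$ solves $\Delta^2 u_* = p_0 + U\partial_x\phi_* - \beta u_* - f_V(u_*)$ in $\Omega$ with clamped boundary conditions, I would bootstrap: $f_V(u_*) = -[u_*, v(u_*) + F_0]$ lies in $L_2(\Omega)$ by the sharp regularity of the Airy stress function ($v(u_*) \in H^4\cap H_0^2$, hence $[u_*, v(u_*)] \in L_2$), $F_0 \in H^4$, $p_0 \in L_2$, and $\partial_x \phi_* \big|_{z=0}$ is controlled by elliptic trace regularity for the (uniformly elliptic, $U<1$) flow equation; so $\Delta^2 u_* \in L_2(\Omega)$ and elliptic regularity for the biharmonic operator with clamped conditions gives $u_* \in H^4(\Omega)$. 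For $\phi_*$, the $W_2(\realsthree_+)$ statement follows from interior and boundary elliptic estimates for $\Delta - U^2\partial_x^2$ with the now-more-regular Neumann datum $U\partial_x u_* \in H^1(\Omega) \subset H^{1/2}$. The main obstacle I anticipate is not any single step but making the reduction $\phi = \phi(u)$ fully rigorous in the homogeneous space $W_1(\realsthree_+)$ — in particular verifying that the trace functional is well defined and bounded there, that the bilinear form $\|\nabla\phi\|^2 - U^2\|\partial_x\phi\|^2$ is genuinely coercive on $W_1$ (not merely on $H^1$), and that the resulting $Q(u)$ has the claimed sign; this is exactly where subsonicity $U<1$ is indispensable, and it is the delicate point that must be handled carefully rather than quoted. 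In fact, since Theorem \ref{statictheorem} is attributed to \cite[Theorem 6.5.10]{springer}, a legitimate alternative is simply to invoke that reference and sketch only the reduction-plus-calculus-of-variations skeleton above for the reader's convenience.
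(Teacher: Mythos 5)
Your overall route --- solve the elliptic flow problem for fixed $u$ by Lax--Milgram on $W_1(\realsthree_+)$ (coercive precisely because $U<1$, with the boundary functional controlled through the trace/Hardy inequality), minimize the reduced functional over $H_0^2(\Omega)$, and bootstrap regularity through the biharmonic and flow equations --- is the standard one, and it is essentially the argument behind the cited result. Note that the paper itself does not prove this theorem: it invokes \cite[Theorem 6.5.10]{springer} and only records that $D$ is smooth and coercive (via Lemma \ref{l:epsilon}), hence attains its minimum. So your closing remark is consistent with what the paper actually does; but since you chose to sketch the reduction, the sketch should be internally correct, and there is one genuine error in it.

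The sign of the enslaved quadratic term $Q(u)$ is wrong, and two of your subsequent steps lean on that sign. Write $a(\phi,\psi)=(\nabla\phi,\nabla\psi)-U^2(\partial_x\phi,\partial_x\psi)$ and $\ell(\psi)=U\lb \partial_x u, tr[\psi]\rb_\Omega$, so that $D(u,\phi)=\cdots+\tfrac12 a(\phi,\phi)+\ell(\phi)$. The $\phi$ that is critical for $D(u,\cdot)$ minimizes the coercive quadratic-plus-linear functional $\tfrac12 a(\phi,\phi)+\ell(\phi)$ over a space containing $\phi=0$, so $Q(u)=\min_\phi\left[\tfrac12 a(\phi,\phi)+\ell(\phi)\right]=-\tfrac12 a(\phi(u),\phi(u))\le 0$, with equality only when $\phi(u)=0$; your formula $Q(u)=\tfrac12 U\lb\partial_x u, tr[\phi(u)]\rb$ is consistent with this, but that quantity equals $-\tfrac12 a(\phi(u),\phi(u))$ and is therefore nonpositive, not nonnegative. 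Consequently (i) $\widetilde D$ is the plate energy \emph{minus} a nonnegative quadratic form, so coercivity is not automatic: you must estimate $|Q(u)|\le \tfrac{C\,U^2}{1-U^2}\|u_x\|^2_\Omega\le \epsilon\|\Delta u\|^2+C_\epsilon\|u\|^2$ (the Lax--Milgram a priori bound plus interpolation) and then absorb $\|u\|^2$ using Lemma \ref{l:epsilon} --- this is exactly the interactive-energy estimate the paper records after Lemma \ref{energybound}, and it is the step your argument silently skips; and (ii) $\tfrac12\|\Delta u\|^2+Q(u)$ is not convex (it is a convex term plus a concave one), so weak lower semicontinuity cannot be argued by convexity; instead one uses that $u\mapsto\phi(u)$ factors through the compact embedding $H_0^2(\Omega)\hookrightarrow H^1(\Omega)$, making $Q$ weakly continuous. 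Both repairs are routine, but as written the coercivity and lower-semicontinuity claims do not stand. A minor further caution: in the regularity bootstrap the trace $\partial_x\phi_*|_{z=0}$ is not defined for $\phi_*\in W_1$ alone, so you must improve the flow regularity first (the Neumann datum $\mathbf 1_\Omega U\partial_x u_*$ already lies in $H^{1/2}(\realstwo)$ for $u_*\in H_0^2(\Omega)$, giving $\nabla\phi_*\in H^1$ locally) and only then conclude $\Delta^2 u_*\in L_2(\Omega)$; your ordering reverses this, though the circularity is easily untangled.
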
 \noindent
The potential energy $D(u, \phi) $ is smooth on a local space  $H_0^2(\Omega) \times W_1(\mathbb R^3_{+})$ and, moreover,
$$D(u,\phi) \geq c [\|\Delta  u\|^2_{\Omega}  + \|\nabla \phi\|_{\realsthree_+}^2 - C.$$
This latter property is a consequence of Lemma \ref{l:epsilon} which controls lower frequencies.
Thus,  it achieves its minimum and the extremal set of the functional $D$ is non-empty.

\noindent We denote the set of all stationary solutions (weak solutions to \eqref{static} above) as $\mathcal N$.

\section{Statement of main result and discussion}\label{mainresults}

\subsection{Main results}
The first main result deals with {\em regular} initial data in the domain of the generator, $\mathscr D(\bT)$.
\begin{theorem}\label{regresult}
Let $0\le U<1$ and let $f(u)=f_V(u)$ and assume $p_0 \in L_2(\Omega)$ and $F_0 \in H^{4}(\Omega)$. Then for all $ k>0$  and $\beta \ge 0$ any  solution $(u(t),u_t(t);\phi(t),\phi_t(t))=S_t(y_0)$ to the flow-plate system \eqref{flowplate} with $y_0 \in
\cD(\bT)$ and spatially localized initial flow data (i.e., there exists a $\rho_0>0$ so that for $|\xb|\ge \rho_0$ we have $\phi_0(\xb) = \phi_1(\xb)=0$) has the property that \begin{align*}\lim_{t \to \infty} \inf_{(\hat u,\hat \phi) \in \mathcal N}\left\{\|u(t)-\hat u\|^2_{H^2(\Omega)}+\|u_t(t)\|^2_{L_2(\Omega)}+\|\phi(t)-\hat \phi\|_{H^1( K_{\rho} )}^2+\|\phi_t(t)\|^2_{L_2( K_{\rho} )} \right\}\\=0, \text{ for any }  \rho>0.\end{align*} \end{theorem}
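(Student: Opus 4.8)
The plan is to combine three ingredients: (i) the reduction of the full flow-plate system to a delayed plate equation à la \cite{LBC96,b-c-1} (Theorem \ref{rewrite} in the paper), which expresses the flow influence on the plate as a memory/downwash term with a finite-speed-of-propagation structure; (ii) the finiteness of the dissipation integral, Corollary \ref{dissint}, which is available precisely because $k>0$ and the subsonic energy identity \eqref{eident} holds; and (iii) a gradient-system/\L ojasiewicz-or-compactness argument showing that the $\omega$-limit set of a regular trajectory in the local topology $\widetilde Y$ consists of stationary points. First I would exploit the spatial localization of $(\phi_0,\phi_1)$ together with the finite speed of propagation for the flow equation $(\partial_t+U\partial_x)^2\phi=\Delta\phi$ (valid since $U<1$): on any fixed ball $K_\rho$, the flow state at time $t$ is determined, through the downwash $-\mathbf 1_\Omega(\partial_t+U\partial_x)u$, only by plate data on a bounded time window $[t-t_\rho, t]$ for an explicit $t_\rho=t_\rho(\rho)$. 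This is the mechanism by which control of the plate velocity controls the flow locally.

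Next I would establish convergence of the plate component. Using the regularity $y_0\in\cD(\bT)$ and the embedding \eqref{domainprop}, together with propagation of this regularity on the infinite horizon (boundedness of $S_t y_0$ in the higher norm, uniformly in $t$ — this is where time-invariance of higher energies, i.e.\ the $\cD(\bT)$-bound in Theorem \ref{nonlinearsolution}, is used, and is the step most sensitive to the von Karman nonlinearity), I obtain that the trajectory $t\mapsto(u(t),u_t(t))$ is precompact in $H^2_0(\Omega)\times L_2(\Omega)$. Corollary \ref{dissint} forces $\|u_t(t)\|\to 0$ along a sequence $t_n\to\infty$, and then a standard argument for second-order gradient-like plate equations — testing the reduced delayed plate equation, using the energy inequality to see $\cE(t)$ is nonincreasing and bounded below (Lemma \ref{energybound}), and identifying the limit equation — shows any weak limit point $\hat u$ of $u(t_n)$ solves the static plate equation in \eqref{static}. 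To upgrade "along a subsequence" to a genuine limit, I would use that $D(u,\phi)$ in Theorem \ref{statictheorem} is a strict Lyapunov functional for the reduced dynamics (it decreases along non-stationary trajectories, is constant on $\mathcal N$), so the full $\omega$-limit set $\omega(y_0)\subset\mathcal N$, whence $\mathrm{dist}_{Y_{pl}}\big((u(t),u_t(t)),\mathcal N\big)\to 0$.

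Finally, having convergence of the plate part, I recover convergence of the flow part in $Y_{fl,\rho}$ for each $\rho$: because the local flow state on $K_\rho$ at time $t$ depends only on plate data on $[t-t_\rho,t]$, and that plate data converges (in $Y_{pl}$) to the stationary set, a continuous-dependence estimate for the flow equation on the finite window $[t-t_\rho,t]$ — combined with the fact that each $\hat u\in\mathcal N$ generates, via the corresponding time-independent downwash $-\mathbf 1_\Omega U\partial_x\hat u$, exactly the stationary flow $\hat\phi$ solving the third and fourth equations of \eqref{static} — yields $\|\phi(t)-\hat\phi\|_{H^1(K_\rho)}+\|\phi_t(t)\|_{L_2(K_\rho)}\to 0$ along the relevant subsequence; again the Lyapunov structure of $D(u,\phi)$ promotes this to the full limit. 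Assembling the plate and flow pieces gives the displayed convergence, for every $\rho>0$.

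The main obstacle I anticipate is the infinite-horizon propagation of $\cD(\bT)$-regularity in the presence of the genuine (non-compact, non-rotational, $\alpha=0$) von Karman nonlinearity: one must show that the higher-order energy stays bounded as $t\to\infty$, and — for the gradient-system argument — essentially invariant, which is exactly the "relaxed compactness"/time-invariance-of-higher-energies issue flagged in the introduction. The delicate point is that at $\alpha=0$ the nonlinearity $f_V$ does not act compactly on the finite energy space, so compactness of the plate orbit must be extracted from the regularity gain in $\cD(\bT)$ and the sharp regularity of the Airy stress function, rather than from the nonlinearity itself; controlling the memory term in the reduced equation uniformly in time (so that it does not destroy monotonicity of the Lyapunov functional) is the crux.
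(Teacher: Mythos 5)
Your overall architecture matches the paper's: reduction to the delayed plate equation, use of the dissipation integral to kill $u_t$, higher regularity for $\cD(\bT)$ data, local flow analysis via the finite-propagation structure, and identification of limit points as weak solutions of \eqref{static}. But there is a genuine gap at exactly the point you flag as ``the main obstacle'': the uniform-in-time higher-order bound. You attribute it to ``the $\cD(\bT)$-bound in Theorem \ref{nonlinearsolution},'' but that theorem gives uniform boundedness only in the $Y$-norm; invariance of $\cD(\bT)$ is qualitative and finite-horizon. The actual content of the paper here is Theorem \ref{highernorms}, proved in two stages: (i) on a finite horizon one time-differentiates the \emph{full} flow--plate system and controls the nonlinear contribution $B(t)$ by writing $\langle \tfrac{d}{dt}f(u), u_{tt}\rangle = \tfrac{d}{dt}Q_0 + P_0$ and invoking the sharp regularity of the Airy stress function, closing with Gronwall; (ii) for $t$ beyond $t^{\#}$ one switches to the time-differentiated \emph{delayed} plate equation and builds a Lyapunov function $W(t)$ satisfying $W' + \widetilde\gamma W \le M + C\|u_t\|^2 W$, so that global boundedness follows from the finiteness of $\int_0^\infty \|u_t\|^2\,dt$ (Corollary \ref{dissint}) rather than from any largeness of $k$ or $\beta$. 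Without this two-stage argument the rest of your plan has nothing to stand on, and simply deferring it leaves the crux unproved.

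Second, your flow step --- ``a continuous-dependence estimate for the flow equation on the finite window'' --- is precisely where the failure of the uniform Lopatinski condition bites: with Neumann data $h=u_t+Uu_x$ controlled only in $L_2(\Omega)$, the flow is controlled only in $H^{2/3}$ locally, so convergence of the plate in the energy topology cannot by itself yield convergence of $\phi$ in $H^1(K_\rho)$. The paper instead uses the higher plate bounds plus interpolation to bound $(\nabla\phi^{**},\phi_t^{**})$ in $H^{\beta}(K_\rho)$ for some $\beta\le 1/3$ (Lemma \ref{compact2}), extracts strong local subsequential convergence from the compactness criterion (Lemma \ref{compactnesscriterion}), and then identifies the velocity limit as zero by pairing the explicit formula for $\phi_t^{**}$ with test functions and using only $\|u_t\|_{L_2(\Omega)}\to 0$. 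Your route could be repaired once the higher bounds are in hand, but it would additionally require strong convergence of $u_t$ in a positive-order Sobolev norm, which is not established. Finally, your appeal to $D(u,\phi)$ as a strict Lyapunov functional constant on $\mathcal N$ is both unsubstantiated (the delay term $q^u$ breaks the gradient structure, and $D$ need not be constant across $\mathcal N$) and unnecessary: the theorem asserts convergence to the \emph{set} $\mathcal N$, which already follows from the subsequential statement, as the paper notes in the remark following the theorem.
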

\begin{remark} The conclusion of Theorem \ref{regresult} is implied by the statement that for any initial data $y_0 \in Y$ and any sequence of times $t_n \to \infty$ there exists a subsequence $t_{n_k} \to \infty$ such that the discrete trajectory $S_{t_{n_k}}(y_0)$ converges strongly (in $Y_{\rho}$) to an element of $\mathcal N$. This is how we shall prove the theorem above corresponding to smooth initial data. \end{remark}
Our next result deals with finite energy  initial data.
\begin{theorem}\label{weakth}Let $0\le U<1$ and let $f(u)=f_V(u)$ and suppose the damping $k>0$, $\beta \geq 0$  in \eqref{flowplate}; assume $p_0 \in L_2(\Omega)$ and $F_0 \in H^{4}(\Omega)$. Also, suppose $y_0 \in Y$ with localized initial flow data (as in Theorem \ref{regresult} above). Then any  solution $(u(t),u_t(t);\phi(t),\phi_t(t))=S_t(y_0)$ to the flow-plate system \eqref{flowplate} with $y_0 \in  {Y}
$ converges weakly in $Y_{\rho}$ (i.e., weakly in $Y_{pl}\times Y_{fl,\rho}$) for any $\rho$, as $t \rightarrow \infty$, to the stationary set $\mathcal{N} $.
\end{theorem}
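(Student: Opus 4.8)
The strategy is to deduce Theorem \ref{weakth} from Theorem \ref{regresult} by an approximation argument, exploiting the density of $\mathscr D(\bT)$ in $Y$ together with a uniform-in-time Hadamard continuity property for the nonlinear semigroup $S_t$. The key external inputs I expect to use are: (i) the convergence statement for smooth data already established in Theorem \ref{regresult}; (ii) the global-in-time bound of Lemma \ref{globalbound} and the finiteness of the dissipation integral, Corollary \ref{dissint}; (iii) the fact (asserted in the paper's introduction, to be proved en route) that the difference of two plate trajectories decays exponentially, which upgrades ordinary finite-horizon well-posedness to a bound $\|S_t(y_0^1)-S_t(y_0^2)\|_{Y_\rho} \le C(R)\,\|y_0^1-y_0^2\|_Y$ valid \emph{uniformly in $t\ge0$} for data in a ball of radius $R$ in $Y$; and (iv) closedness of $\mathcal N$ under weak $Y_\rho$-limits.

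First I would fix $y_0 \in Y$ with localized flow data and a sequence $t_n\to\infty$. By Lemma \ref{globalbound}, the trajectory $S_t(y_0)$ is bounded in $Y$ (in the plate component and in the gradient seminorm of the flow), uniformly in $t$; restricting to any ball $K_\rho$ and using the Hardy/Rellich embeddings, $\{S_{t_n}(y_0)\}$ is bounded in $Y_\rho$, hence has a weakly convergent subsequence (not relabeled) $S_{t_n}(y_0)\rightharpoonup z_*$ in $Y_\rho$. The goal is to show $z_*\in\mathcal N$; once this is shown for an arbitrary weakly convergent subsequence, a standard subsequence-of-subsequence argument gives weak convergence of the whole trajectory to $\mathcal N$ (using that $\mathcal N$ is weakly closed in $Y_\rho$; this follows since weak solutions to \eqref{static} form a closed set — the nonlinear term $f_V$ is weakly continuous on $H^2_0(\Omega)$ by compactness of the Airy-stress map into $H^2_0$, and the linear flow part passes to the limit).

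Second, the core step: approximate $y_0$ in $Y$ by a sequence $y_0^{(m)}\in\mathscr D(\bT)$ with localized flow data and $\|y_0^{(m)}-y_0\|_Y\to0$ (localization of the approximants is arranged by cutting off the flow data outside a large ball, which is compatible with density since the exact data already have compact flow support or can be taken so). Fix $\varepsilon>0$. For each $m$, Theorem \ref{regresult} (in the subsequential form of the Remark) gives, along a further subsequence of $t_n$, $S_{t_n}(y_0^{(m)}) \to \hat y^{(m)} \in \mathcal N$ \emph{strongly} in $Y_\rho$. Now write, for any $t$,
\begin{equation*}
\|S_{t}(y_0)-\hat y^{(m)}\|_{Y_\rho} \le \|S_{t}(y_0)-S_{t}(y_0^{(m)})\|_{Y_\rho} + \|S_{t}(y_0^{(m)})-\hat y^{(m)}\|_{Y_\rho}.
\end{equation*}
The first term is bounded by $C(R)\|y_0-y_0^{(m)}\|_Y$ \emph{uniformly in $t$} by the uniform Hadamard continuity, where $R$ bounds all the relevant data; choose $m$ so large that this is $<\varepsilon$. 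With $m$ now fixed, the second term tends to $0$ along the subsequence of $t_n$ by Theorem \ref{regresult}. Hence along this subsequence $S_{t_n}(y_0)$ is within $2\varepsilon$ of the point $\hat y^{(m)}\in\mathcal N$ eventually. A diagonal argument over $\varepsilon=1/j$ then produces a subsequence $t_{n_k}$ and points $\hat y^{(m_j)}\in\mathcal N$ with $\mathrm{dist}_{Y_\rho}(S_{t_{n_k}}(y_0),\mathcal N)\to0$ in fact we even obtain that $S_{t_{n_k}}(y_0)$ is Cauchy-close to $\mathcal N$; combined with the weak limit $z_*$ identified above, one concludes $z_*\in\overline{\mathcal N}^{\,w}=\mathcal N$. (In fact this argument shows a bit more than weak convergence along subsequences — it shows the full trajectory is asymptotically close to $\mathcal N$ in the $Y_\rho$-distance up to an error controlled by the $Y$-approximation quality — but for the stated theorem weak convergence suffices, and one reads off weak convergence of $(u,u_t,\phi,\phi_t)$ componentwise.)

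The main obstacle is establishing the \emph{uniform-in-time} Hadamard continuity estimate $\|S_t(y_0^1)-S_t(y_0^2)\|_{Y_\rho}\le C(R)\|y_0^1-y_0^2\|_Y$ for all $t\ge0$ — ordinary well-posedness (Theorem \ref{nonlinearsolution}) only gives this on compact time intervals with constants that a priori grow in $t$. Closing this requires the exponential decay of the difference of two plate trajectories, which in turn rests on the reduction of the flow-plate system to a delayed plate equation (Theorem \ref{rewrite} referenced in the introduction), the finiteness of the dissipation integral (Corollary \ref{dissint}), and — crucially, and this is the genuinely hard analytic point flagged in the introduction — propagation of higher regularity on the infinite time horizon, which is delicate precisely because the von Karman nonlinearity is \emph{not} compact at the finite-energy level when $\alpha=0$ and one must instead invoke a relaxed-compactness / invariance-of-higher-energies argument in the spirit of \cite{dafermos,slemrod}. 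A secondary technical point is handling the flow topology: everything must be done in the local space $\widetilde Y_{fl}$ (the $L_2(\realsthree_+)$ norm of $\phi$ is not globally controlled), so the Hadamard estimate and the weak-compactness extraction must be phrased in $Y_\rho$ for each fixed $\rho$, using \eqref{hypertime} and the Hardy inequality to pass between the gradient seminorm and the genuine $H^1(K_\rho)$ norm.
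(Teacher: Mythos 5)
There is a genuine gap. Your entire argument hinges on the uniform-in-time Hadamard continuity estimate $\sup_{t\ge 0}\|S_t(y_0)-S_t(y_0^{(m)})\|_{Y_\rho}\le C(R)\|y_0-y_0^{(m)}\|_Y$, which you propose to obtain from the exponential decay of the difference of two plate trajectories. But in this paper that decay (Theorem \ref{exp}) and the resulting uniform Hadamard continuity (Theorem \ref{strongcont}) are only available under the \emph{largeness} assumptions $k\ge k_0(R)>0$ and $\beta\ge\beta_0(R)>0$; the paper explicitly states (Section \ref{sensi}) that whether uniform-in-time continuity holds in general is not known. Theorem \ref{weakth}, however, is asserted for \emph{any} $k>0$ and $\beta\ge 0$ — in particular $\beta=0$ is allowed — so the step ``choose $m$ so large that the first term is $<\varepsilon$ uniformly in $t$'' cannot be justified under the stated hypotheses. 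What your route actually proves is (a strengthening of) Theorem \ref{conequil1}, the strong-convergence result with large damping, not the weak-convergence result with minimal damping.

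The paper's proof of Theorem \ref{weakth} is direct and never approximates by smooth data. From the global bound (Lemma \ref{globalbound}) one extracts a weak subsequential limit $(\hat u,\hat w;\hat\phi,\hat\psi)$ in $Y_\rho$. The finiteness of the dissipation integral (Corollary \ref{dissint}), combined with the uniform bound on $\partial_t\langle u_t,\eta\rangle$ obtained from the reduced (delayed) plate equation and Barbalat's lemma, gives $\|u_t(t)\|_{L_2(\Omega)}\to 0$, hence $\hat w=0$ (Theorem \ref{convergenceprops}, items 2--4, which hold for finite-energy data with any $k>0$). The flow velocity limit $\hat\psi=0$ is identified by testing the distributional version of the Kirchhoff-type representation of $\phi^{**}_t$ (Theorem \ref{flowformula}) against $\zeta\in C_0^\infty(K_\rho)$ and integrating by parts, yielding the bound \eqref{whatwe} in terms of $\sup_\tau\|u_t(t-\tau)\|_{0,\Omega}$, which tends to zero. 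Finally the limit point is shown to satisfy \eqref{static} weakly by averaging the variational identities over $[t_n,t_n+c]$ and passing to the limit (STEP 4). Each of these steps is purely variational and requires neither regular data nor any propagation of higher regularity, which is precisely why the weak result survives with minimal damping. You should replace the approximation scheme by this direct identification of the weak limit.
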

The above  results  hold with no restriction on damping---i.e., for any $k>0$ and $\beta\geq 0$.
In order to improve weak convergence to a strong convergence for finite energy initial data a stronger influence of the plate damping mechanism is required. The first theorem below makes no assumptions on the structure of the set of stationary solutions to the flow-plate problem \eqref{static}.
\begin{theorem}\label{conequil1}
Let $0\le U<1$ and let $f(u)=f_V(u)$ and assume $p_0 \in L_2(\Omega)$ and $F_0 \in H^{4}(\Omega)$. Then there are minimal damping coefficients $k_0$ and $\beta_0$ so that for $k\ge k_0>0$ and $\beta\ge \beta_0>0$  any generalized solution $(u(t),\phi(t))$ to the system with localized (in space) initial flow data (see Theorem \ref{regresult}) has the property that \begin{align*}\lim_{t \to \infty} \inf_{(\hat u,\hat \phi) \in \mathcal N}\left\{\|u(t)-\hat u\|^2_{H^2(\Omega)}+\|u_t(t)\|^2_{L_2(\Omega)}+\|\phi(t)-\hat \phi\|_{H^1( K_{\rho} )}^2+\|\phi_t(t)\|^2_{L_2( K_{\rho} )} \right\}\\=0, \text{ for any }  \rho>0.\end{align*} \end{theorem}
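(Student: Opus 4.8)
The plan is to obtain the finite-energy statement from the smooth-data statement Theorem \ref{regresult} (which is valid for \emph{every} $k>0$, $\beta\ge0$) by an approximation argument, the new analytic input being a continuous-dependence estimate that is \emph{uniform in time} and which, unlike the weak-convergence Theorem \ref{weakth}, forces the damping to be large. Fix $y_0\in Y$ with flow data supported in $K_{\rho_0}$. First I would record the density statement that those elements of $\cD(\bT)$ having compactly supported flow component are dense, in the topology of $Y$, among the data in $Y$ with compactly supported flow; this lets us pick $y_0^n\in\cD(\bT)$ with flow data supported in a fixed ball $K_{\rho_1}$ and $y_0^n\to y_0$ in $Y$ (the compatibility condition $\Dn\phi_0^n=-\mathbf{1}_\Omega(u_1^n+Uu^n_{0x})$ being arranged by a routine lifting, followed by a cutoff that preserves localization). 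By Theorem \ref{regresult}, for each fixed $n$ and each $\rho>0$ one has $\mathrm{dist}_{Y_\rho}\big(S_t(y_0^n),\mathcal N\big)\to0$ as $t\to\infty$, where $\mathrm{dist}_{Y_\rho}$ denotes the infimum over $\mathcal N$ in the product norm appearing in the statement.

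The heart of the argument is the following claim: there are $k_0,\beta_0>0$ such that, whenever $k\ge k_0$ and $\beta\ge\beta_0$, for every $\rho>0$ and every bounded $B\subset Y$ consisting of data whose flow part is supported in a fixed ball, there is a modulus $\eta_{\rho,B}$ with $\eta_{\rho,B}(s)\to0$ as $s\to0$ such that $\sup_{t\ge0}\|S_t(z_0)-S_t(\tilde z_0)\|_{Y_\rho}\le\eta_{\rho,B}(\|z_0-\tilde z_0\|_Y)$ for all $z_0,\tilde z_0\in B$. I would prove this by splitting $[0,\infty)=[0,T_\rho]\cup[T_\rho,\infty)$, where $T_\rho$ is a finite-speed-of-propagation time beyond which the portion of the flow over $K_\rho$ no longer feels the (compactly supported, hence by then radiated-out) initial flow data --- this is the regime in which the reduction Theorem \ref{rewrite} represents the flow purely through the plate's recent history. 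On $[0,T_\rho]$ the bound is simply the finite-horizon continuous dependence of generalized solutions, which holds in the full $Y$-norm and hence in $Y_\rho$, uniformly over $B$. On $[T_\rho,\infty)$ two ingredients combine. First, \emph{exponential decay of the plate difference under large damping}: writing $w=u-\hat u$ for the plate components of $S_t(z_0)$ and $S_t(\tilde z_0)$, the difference solves a clamped plate equation with damping $kw_t+\beta w$, the locally Lipschitz increment $f_V(u)-f_V(\hat u)$, and the increment of the aerodynamic loading; rewriting the loading via Theorem \ref{rewrite}, using finiteness of the dissipation integral (Corollary \ref{dissint}) and a Lyapunov/multiplier computation in which large $k$ dominates the flow memory term and large $\beta$ makes the plate operator coercive against the von Karman increment, one obtains $\|w(t)\|_{H^2(\Omega)}^2+\|w_t(t)\|_{L_2(\Omega)}^2\le C_B\,e^{-\omega t}\,\eta_0(\|z_0-\tilde z_0\|_Y)$ for suitable $\omega>0$. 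Second, \emph{the flow difference is slaved to the recent plate history}: for $t\ge T_\rho$ the Kirchhoff/Huygens representation of $\phi$ in Theorem \ref{rewrite} bounds $\|\phi(t)-\hat\phi(t)\|_{H^1(K_\rho)}^2+\|\phi_t(t)-\hat\phi_t(t)\|_{L_2(K_\rho)}^2$ by a $\rho$-dependent constant times $\sup_{s\in[t-T_\rho,t]}\big(\|w(s)\|_{H^2(\Omega)}^2+\|w_t(s)\|_{L_2(\Omega)}^2\big)$, which by the first ingredient decays exponentially. Patching the two intervals delivers the uniform-in-time estimate.

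The conclusion is then a routine approximation argument. Given $\varepsilon>0$ and $\rho>0$: since the $y_0^n$ lie in a bounded subset of $Y$ with flow data in a fixed ball, the claim (applied with the pair $y_0,y_0^n$) lets us pick $n$ with $\sup_{t\ge0}\|S_t(y_0)-S_t(y_0^n)\|_{Y_\rho}<\varepsilon/2$; then Theorem \ref{regresult} provides $T$ with $\mathrm{dist}_{Y_\rho}(S_t(y_0^n),\mathcal N)<\varepsilon/2$ for $t\ge T$; by the triangle inequality $\mathrm{dist}_{Y_\rho}(S_t(y_0),\mathcal N)<\varepsilon$ for every $t\ge T$. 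As $\varepsilon$ and $\rho$ are arbitrary, this is precisely the asserted local-energy convergence of full flow-plate trajectories to $\mathcal N$.

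I expect the main obstacle to be the exponential decay of the plate difference: the difference equation carries the non-compact von Karman increment (there is no rotational inertia, $\alpha=0$, hence no velocity smoothing) together with a genuinely non-small memory contribution from the flow, so producing an honest exponential rate --- rather than a mere quasi-stability estimate with a lower-order remainder that would then have to be absorbed --- is what pins down the thresholds $k_0,\beta_0$ and constitutes the technical crux. Transferring this decay to the flow component is conceptually clean but still demands care in controlling the continuity of the history-to-flow map in the local energy topology and in choosing $T_\rho$ compatibly with $\mathrm{diam}(\Omega)$ and the support radius $\rho_0$.
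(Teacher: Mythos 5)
Your overall architecture coincides with the paper's: approximate $y_0$ by smooth data, invoke Theorem \ref{regresult} for the approximants, and close with a uniform-in-time continuity estimate for $S_t$ whose proof is where the large damping enters. The exponential decay of the difference of two plate trajectories via a Lyapunov functional on the reduced delayed equation is exactly the paper's Theorem \ref{exp}, and your finite-horizon/infinite-horizon splitting matches Section 6. The gap is in your second ingredient on $[T_\rho,\infty)$: you propose to bound $\|\phi(t)-\hat\phi(t)\|_{H^1(K_\rho)}^2+\|\phi_t(t)-\hat\phi_t(t)\|_{L_2(K_\rho)}^2$ by $C(\rho)\sup_{s\in[t-T_\rho,t]}\big(\|w(s)\|_{2,\Omega}^2+\|w_t(s)\|^2\big)$ via the Kirchhoff representation. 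For two \emph{finite energy} trajectories the Neumann datum of the difference is $-(w_t+Uw_x)\mathbf{1}_\Omega$, which is merely $L_2$ in space, and the hyperbolic Neumann map from $L_2$ boundary data lands only in $H^{2/3}(\realsthree_+)$ --- the $1/3$-derivative loss of Tataru is optimal. The slaving estimate you invoke is therefore precisely the ``component-wise'' bound the paper stresses is unavailable when $\alpha=0$: Lemma \ref{compact2} requires $u_t$ in $H^{1+\beta}(\Omega)$, one derivative above finite energy, and is used in the paper only for the regular trajectories of Theorem \ref{regresult}. (A symptom: your claim would give exponential \emph{decay} of the flow difference, which is stronger than anything the paper obtains.)

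The repair is the paper's route: rather than estimating the flow difference through the Neumann representation, run the energy identity for the difference of the two full flow-plate trajectories. Owing to the cancellation at the interface, the coupling contributes only the dissipation $k\int_0^t\|\bu_t\|^2$ and the lower-order interactive term $2U\langle\bp,\bu_x\rangle$, so $E_z(t)$ (which already contains $\|\nabla\bp\|^2+\|\bp_t\|^2$) is controlled by $E_z(0)$ plus $\int_0^t\big(\|\bu_t\|^2+C(R)\|\bu_t\|\big)\,d\tau$; the tail of that integral beyond some $T^*$ is made small by Theorem \ref{exp}, and the portion on $[0,T^*]$ is handled by the finite-horizon Gronwall bound. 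This yields only smallness of $\sup_t E_z(t)$ in terms of $E_z(0)$ (uniform Hadamard continuity), not decay, but that is all the approximation step needs. One further remark in your favor: the approximating data must themselves have compactly supported flow component (for both Theorem \ref{regresult} and Theorem \ref{exp} to apply), and you address this density/compatibility issue explicitly where the paper leaves it implicit.
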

\begin{remark} The minimal damping coefficients $k_0$ and $\beta_0$ depend on an invariant set for the plate dynamics, which itself depends on the loading $p_0$ and $F_0$, as well as the domain $\Omega$ and the constants $U,\rho_0$, but {\em is independent on the particular initial data of the system}. \end{remark}

If we make a further physical assumption that  $\mathcal N$ is an isolated set (e.g., finite), we have the following second main theorem as a corollary:
\begin{corollary}\label{improve} Assume that $\mathcal N$ is an isolated set.
Let the hypotheses of Theorem \ref{weakth} be in force; then for any generalized solution $(u,\phi)$ to \eqref{flowplate} (with localized flow data, as above), there exists a solution satisfying \eqref{static} such that $$\big(u(t),u_t(t);\phi(t),\phi_t(t)\big) \rightharpoonup (\hat u, 0;\hat \phi, 0),~~t \to \infty$$ with weak convergence taken with respect to the topology of $Y_{\rho}$.

Let the hypotheses of Theorem \ref{conequil1} (strong convegence) be in force; then for any generalized solution $(u,\phi)$ to \eqref{flowplate} (with localized flow data, as above), there exists a solution $(\hat u, \hat \phi)$ satisfying \eqref{static} such that
\begin{align*}\lim_{t \to \infty} \left\{\|u(t)-\hat u\|^2_{H^2(\Omega)}+\|u_t(t)\|^2_{L_2(\Omega)}+\|\phi(t)-\hat \phi\|_{H^1( K_{\rho} )}^2+\|\phi_t(t)\|^2_{L_2( K_{\rho} )} \right\}\\=0, \text{ for any }  \rho>0.\end{align*}

\end{corollary}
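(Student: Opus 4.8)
The plan is to observe that Theorems~\ref{weakth} and \ref{conequil1} already deliver convergence of the trajectory to the \emph{set} $\mathcal N$ (weakly, resp.\ strongly, in the local topology $Y_\rho$), and to promote this to convergence to a single point by the standard principle that the $\omega$-limit set of a bounded trajectory of a continuous flow is connected, while a discrete set has only singleton connected subsets. Concretely, fixing $y_0\in Y$ with localized flow data and $\rho>0$, I would introduce
\[
\omega_\rho(y_0)=\bigcap_{T\ge 0}\overline{\{S_t(y_0):t\ge T\}},
\]
with closures taken in the strong topology of $Y_\rho$ under the hypotheses of Theorem~\ref{conequil1}, and in the weak topology of $Y_\rho$ under the hypotheses of Theorem~\ref{weakth}.

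First I would check that $\omega_\rho(y_0)$ is nonempty, compact and connected. Boundedness of the orbit in $Y$ (Theorem~\ref{nonlinearsolution}), hence in $Y_\rho$ by the Hardy inequality, gives: in the weak case, that the ambient bounded set is metrizable and weakly sequentially compact; in the strong case, since $\operatorname{dist}_{Y_\rho}(S_t(y_0),\mathcal N)\to 0$ and $\mathcal N$ is compact in $Y_\rho$ (established below), the forward orbit is totally bounded, hence precompact. Continuity of $t\mapsto S_t(y_0)$ into $Y_\rho$ from \eqref{platereq}--\eqref{flowreq} makes each $\overline{\{S_t(y_0):t\ge T\}}$ the closure of a connected set, hence connected and compact, so the decreasing intersection $\omega_\rho(y_0)$ is nonempty, compact and connected. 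By Theorems~\ref{weakth}/\ref{conequil1} we have $\omega_\rho(y_0)\subseteq\mathcal N$, identifying $(\hat u,\hat\phi)\in\mathcal N$ with the state $(\hat u,0;\hat\phi,0)$ (the vanishing of the velocity components is built into those statements and into Corollary~\ref{dissint}). Since $\mathcal N$ is isolated, i.e.\ discrete, $\omega_\rho(y_0)$ reduces to a single point $(\hat u,0;\hat\phi,0)$ with $(\hat u,\hat\phi)$ solving \eqref{static}. Finally, precompactness together with $\omega_\rho(y_0)=\{(\hat u,0;\hat\phi,0)\}$ forces $S_t(y_0)\to(\hat u,0;\hat\phi,0)$ as $t\to\infty$ in the appropriate topology: any sequence $t_n\to\infty$ staying outside a neighborhood of this point would have a convergent subsequence whose limit lies in $\omega_\rho(y_0)$ yet differs from the point, a contradiction. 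An exhaustion over $\rho$ (using $K_{\rho_1}\subset K_{\rho_2}$) then shows the limiting stationary state is $\rho$-independent, giving the conclusion stated for every $\rho$, in both the weak and the strong regimes.

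Two supporting facts are needed, and here lies the main technical point. I would show $\mathcal N$ is compact in $Y_\rho$ using Theorem~\ref{statictheorem}: every element lies in $(H^4\cap H_0^2)(\Omega)\times W_2(\realsthree_+)$, and $\mathcal N$ is bounded there (the standard boundedness of the von Karman stationary set, via the coercivity $D(u,\phi)\ge c[\|\Delta u\|_\Omega^2+\|\nabla\phi\|_{\realsthree_+}^2]-C$ and Lemma~\ref{l:epsilon}, cf.\ \cite[Sect.~6.5.6]{springer}); then Rellich's theorem yields precompactness of the plate components in $H^2(\Omega)$ and the Hardy inequality yields precompactness of the flow components restricted to $K_\rho$ in $H^1(K_\rho)$, while closedness of $\mathcal N$ in $Y_\rho$ is immediate from the structure of \eqref{static}. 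Consequently the weak and strong topologies coincide on the compact set $\mathcal N$, so the hypothesis ``$\mathcal N$ isolated'' is unambiguous and the connectedness argument above applies verbatim in the weak setting as well. I expect this bookkeeping of topologies on $\mathcal N$, together with the connectedness of $\omega_\rho(y_0)$, to be the only delicate part; everything else is a direct consequence of Theorems~\ref{weakth} and \ref{conequil1}.
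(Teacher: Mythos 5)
Your proposal is correct and follows exactly the argument the paper intends (the paper states Corollary \ref{improve} without proof): convergence to the set $\mathcal N$ from Theorems \ref{weakth}/\ref{conequil1}, plus connectedness of the $\omega$-limit set of a (weakly or strongly) continuous, precompact orbit, plus discreteness of $\mathcal N$, forces the limit set to be a singleton. Your extra bookkeeping — strong compactness of $\mathcal N$ in $Y_\rho$ via Theorem \ref{statictheorem} and Rellich/Hardy, so that weak and strong topologies agree on $\mathcal N$ and ``isolated'' is unambiguous — is exactly the right point to check, with the only minor imprecision being that boundedness of the full stationary set follows from testing \eqref{static} with the solution itself together with Lemma \ref{l:epsilon} (coercivity of $D$ alone only controls minimizers).
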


\begin{remark}\label{sard}
 For given loads $F_0$ and $p_0$, the set of stationary solution is generically finite. This is to say that there is an open dense set $\mathcal R \subset L_2(\Omega)\times H^4(\Omega)$ such that  if $(p_0,F_0) \in \mathcal R$ then the corresponding set of stationary solutions $\mathcal N$ is finite. This follows from the Sard-Smale theorem, as shown in \cite[Theorem 1.5.7 and Remark 6.5.11]{springer}.
\end{remark}
We pause to reflect on the physical meaning of the above results. First, we note that by the analysis in \cite{delay} no imposed damping is necessary in the system in order to obtain convergence of the plate dynamics to a compact, finite dimensional attracting set. Beyond this, when minimal damping is active in the interior of $\Omega$, we see that the end behavior of full trajectories---plate and flow---is static, but with convergence in a weak sense OR with smooth initial data. When the damping is sufficiently large, the convergence to stationary states---for finite energy initial data---is in the strong topology. This indicates the the subsonic flow-plate dynamics ultimately converge to a static deformation---what is known in aeroelasticity as {\em divergence}. This result corroborates what has been observed physically and numerically: namely, {\em flutter can be eliminated by the introduction of damping  in panels for subsonic flow velocities}.

Several comments are in order:
\begin{enumerate}
\item
The results stated above depend on the presence of nonlinearity in the structure. This has a mitigating effect on controlling low frequencies---see Lemma \ref{l:epsilon}. The analogous result is no longer valid for a {\it  linearization} of the model.
\item
On the other hand, the ability to contend with the nonlinear estimates arising in this analysis is due to the validity of sharp estimates for Airy's stress function. See \cite[p.44]{springer} (and references therein).

\item Our main results are analogous to previous results obtained in \cite{springer} for $\alpha>0$ and in \cite{ryz2} with $\alpha=0$ and a regularizing thermal effect considered. However, due to the failure of the uniform Lopatinski condition for the  Hyperbolic Neumann map, the proof of the present result resorts to very different approach. The proofs of our main theorems require a novel approach which brings together the efforts in \cite{chuey, springer,ryz2} along with new techniques and estimates {\em in the context of delay plate dynamics} and  {\it relaxed}  limit sets \cite{ball, slemrod}. This will be elaborated upon with more technical details  as we proceed with the proof.

\end{enumerate}

\subsection{Discussion}

We now specifically address the difficulties involved in showing a stabilization to equilibria result without assuming either (i) $\alpha>0$ and strong damping of the form $-k_2\Delta u_t$ (as in \cite{chuey,springer}), or (ii) taking $\alpha \ge 0$ and exploiting parabolic effects in a thermoelastic plate \cite{ryz,ryz2}. In both cases, as well as that considered in this treatment, the key task is {\em to first show compact end behavior for the plate dynamics}. This requires the use of a reduction of the full flow-plate dynamics to a delayed plate equation (Theorem \ref{rewrite} below), at which point one may work on this delayed system. In both case (i) and (ii)  above the ultimate character of the nonlinear component of the model is {\em compact}---owing to the fact that parabolic smoothing {\em and} rotational inertia both provide $\nabla u_t \in L_2(\Omega)$. The results in \cite{delay} were the first to show that dissipation could be harvested from the flow in the principal case $\alpha=0$ (via the reduction result) in order to show ultimate compactness of the plate dynamics without imposed mechanical damping nor thermoelastic effects.

In this treatment, the major contribution is the ability to circumvent the seeming lack of natural compactness in the dynamics (particularly in the plate velocity $u_t$). Specifically, the methods which are utilized in showing the (analogous) stabilization to equilibria result in \cite{springer,ryz,ryz2} (and references therein) {\em each critically use that $u_t \to 0$ in $H^1(\Omega)$}. This measure of compactness for the plate component is translated (albeit in different ways) to the flow component of the model (via the flow equations in Theorem \ref{flowformula} below).

We note that in \cite{ryz2} the key to the stabilization result lies in a compactness criterion (given in this treatment as Lemma \ref{compactnesscriterion}) for flow trajectories wherein the flow is bounded in higher norms by the plate trajectory, also in higher norms. These estimates are obtained via the thermoelastic character of the structural dynamics. As no such approach applies here, we take a different tact: we appeal to a more classical approach \cite{ball} wherein we first show that the dynamics strongly stabilize to a stationary point when {\em regular} initial data is considered; this requires giving an a priori bound {\it uniform-in-time}  on regular solutions in higher norms. To do so, we must demonstrate propagation of regularity first on the finite time horizon for the full flow-plate dynamics, followed by operating on the delayed plate dynamics in order to show additional regularity (for regular data) on the infinite time horizon which requires large viscous damping. With this regularity in hand, we show that strong solutions stabilize (strongly) to an equilibria in the sense of Theorem \ref{conequil1}. We then apply an approximation argument to pass this convergence property onto finite energy solutions.

\section{Supportive results}
We begin with two preliminary results which (together) will  provide a skeleton for the proofs of the  main results  formulated in the previous section.
These results, while supporting the final conclusions, are each of interest on their own.

The first result---formulated  already in Theorem \ref{regresult}---shows that for {\em regular initial data} the flow-plate dynamics converge strongly to equilibria. For this result, no assumptions on the damping coefficients are necessary, simply $k>0$ and $\beta \ge 0$.
%\begin{theorem}\label{regresult}
%Let $0\le U<1$ and let $f(u)=f_V(u)$ and suppose the damping $k\ge k>0$ in \eqref{flowplate}; assume $p_0 \in L_2(\Omega)$ and $F_0 \in H^{4}(\Omega)$.Then any  solution $(u(t),u_t(t);\phi(t),\phi_t(t))=S_t(y_0)$ to the flow-plate system \eqref{flowplate} with $y_0 \in \cD(\bT)$ and localized initial flow data (as in Theorem \ref{th:main2}) has the property that \begin{align*}\lim_{t \to \infty} \inf_{(\hat u,\hat \phi) \in \mathcal N}\left\{\|u(t)-\hat u\|^2_{H^2(\Omega)}+\|u_t(t)\|^2_{L_2(\Omega)}+\|\phi(t)-\hat \phi\|_{H^1( K_{\rho} )}^2+\|\phi_t(t)\|^2_{L_2( K_{\rho} )} \right\}\\=0, \text{ for any }  \rho>0.\end{align*} \end{theorem}
%\begin{remark} Unlike our main result for energy level initial data, this result {\em does not require a minimal damping coefficient}; indeed, this will hold for any $k>0$ and $\beta=0$. \end{remark}

The second preliminary result provides uniform-in-time Hadamard continuity for the  semigroup $S_t(\cdot)$ from Theorem \ref{nonlinearsolution} on the {\em infinite time horizon}. For this result the sufficiently large minimal damping parameters $k_0$ and $\beta_0$ are necessary. In general, the sensitivity analysis for the flow-plate dynamics is very subtle on the infinite time horizon. Here, we exploit the fact that sufficiently large plate damping gives  control of convergence rate  of the difference of two (delay) plate trajectories (Theorem \ref{exp} below).
\begin{theorem}\label{strongcont}
With reference to  solutions described by Theorem \ref{nonlinearsolution},  and assuming that the damping parameters are sufficiently large $ k \geq  k_0 > 0$, $\beta \ge \beta_0>0$,  the semigroup $S_t(\cdot)$ is uniform-in-time Hadamard continuous, i.e., for any sequence $y_m^0 \to y^0$ in $Y$ and any $\epsilon>0$ there is an $M$ so that for $m>M$
$$\sup_{t>0} \|S_t(y^0_m) - S_t(y^0)\|_{Y_{\rho}} < \epsilon. $$
\end{theorem}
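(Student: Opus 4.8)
The plan is to reduce the claim to a \emph{uniform-in-time} estimate on the plate component, using the two structural features peculiar to the subsonic regime: the reduction of the coupled dynamics to a delayed plate equation (Theorem~\ref{rewrite}), together with the representation of the flow on bounded spatial sets as a continuous function of the plate trajectory over a finite backward time-window (Theorem~\ref{flowformula}); and the exponential decay, under sufficiently large damping, of the difference of two plate trajectories (Theorem~\ref{exp}). Fix $R$ with $\sup_m\|y^0_m\|_Y\le R$. By Theorem~\ref{nonlinearsolution}, Lemma~\ref{globalbound}, and the dissipativity of the reduced plate dynamics (cf.\ \cite{delay}), all trajectories $S_t(y^0_m)$, $S_t(y^0)$ enter, after some time $T_R$ and forever after, one common $Y$-bounded invariant set $\mathcal B$ whose size depends only on $p_0,F_0,U,\rho_0$; the thresholds $k_0,\beta_0$ will be chosen to depend only on $\mathcal B$, and every other constant may depend on $R$ but never on $m$ nor on the individual datum.

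The first step is to bound the plate difference $w_m:=u_m-u$ uniformly in $t$. By Theorem~\ref{rewrite}, $w_m$ solves a damped von Karman plate equation driven by the memory term $q(u_m^t)-q(u^t)$, by the nonlinear difference $f_V(u_m)-f_V(u)$, and by a forcing generated by the initial flow data which---for the spatially localized flow data relevant to Theorems~\ref{regresult}--\ref{conequil1}---is supported in a finite interval $[0,t^*]$ dictated by finite speed of propagation (Huygens' principle in $\realsthree_+$ after passing to the moving frame; $t^*=t^*(\rho,\rho_0,U,\Omega)$). On $[0,\max\{t^*,T_R\}]$ the finite-horizon well-posedness theory and a Gronwall estimate give $\sup_{\sigma\le\max\{t^*,T_R\}}\|(w_m,\partial_t w_m)(\sigma)\|_{Y_{pl}}^2\le C(R)\|y^0_m-y^0\|_Y^2$. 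For larger $t$ the flow-data forcing is absent, both $u_m$ and $u$ lie in $\mathcal B$, and $f_V$ is locally Lipschitz from $\mathcal B\subset H^2_0(\Omega)$ into $L_2(\Omega)$ by the sharp Airy estimates; choosing $k\ge k_0$ and $\beta\ge\beta_0$ large enough that the mechanical damping dominates both this Lipschitz constant and the memory operator $q$ on $\mathcal B$, Theorem~\ref{exp} yields $\omega>0$ and a constant such that $\|(w_m,\partial_t w_m)(t)\|_{Y_{pl}}^2$ decays like $e^{-\omega t}$ times the supremum of that quantity over $[0,\max\{t^*,T_R\}]$. Combining the two bounds, $\sup_{t\ge 0}\|(w_m,\partial_t w_m)(t)\|_{Y_{pl}}^2\le C(R)\|y^0_m-y^0\|_Y^2\to 0$ as $m\to\infty$.

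The second step is to control the flow difference $\psi_m:=\phi_m-\phi$ in the local norm $\|\cdot\|_{Y_{fl,\rho}}$. Since the flow equation is linear in the downwash, $\psi_m$ equals the free flow evolution of the initial-flow-data difference plus a Duhamel term driven by $-(\partial_t+U\partial_x)w_m\cdot\mathbf 1_\Omega$. For the free part: the subsonic flow energy $E_{fl}$ is conserved by the homogeneous flow equation and, since $U<1$, is equivalent to $\|\nabla\phi\|_{\realsthree_+}^2+\|\phi_t\|_{\realsthree_+}^2$, so the Hardy inequality $\|\phi\|_{L_2(K_\rho)}\le C_\rho\|\nabla\phi\|_{L_2(\realsthree_+)}$ bounds it in $Y_{fl,\rho}$ uniformly in $t$ by $C_\rho\|y^0_m-y^0\|_Y$ (and for localized data it vanishes altogether for $t\ge t^*$, by Huygens). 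For the Duhamel part: finite speed of propagation makes its restriction to $K_\rho$ at time $t$ depend only on $w_m$ over $[\max(0,t-t^*),t]$, and the flow representation of Theorem~\ref{flowformula}---with Corollary~\ref{dissint} supplying the integrability of the boundary traces needed to absorb the loss in the hyperbolic Neumann map---bounds it in $Y_{fl,\rho}$ by $C(\rho)\sup_{\sigma\le t}\|(w_m,\partial_t w_m)(\sigma)\|_{Y_{pl}}$. Invoking the first step, $\sup_{t\ge 0}\|(\psi_m,\partial_t\psi_m)(t)\|_{Y_{fl,\rho}}^2\le C(R,\rho)\|y^0_m-y^0\|_Y^2$; adding this to the plate bound yields $\sup_{t>0}\|S_t(y^0_m)-S_t(y^0)\|_{Y_\rho}^2\le C(R,\rho)\|y^0_m-y^0\|_Y^2\to 0$, which is the assertion.

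I expect the crux to be the exponential-decay input, Theorem~\ref{exp}, and with it the role of the damping: $w_m$ obeys not a plain damped plate equation but one perturbed by the \emph{nonlocal-in-time} memory operator $q$ inherited from the flow \emph{and} by the von Karman nonlinearity, so one must take $k_0,\beta_0$ large enough that the frictional and static damping dominate \emph{both} perturbations \emph{uniformly on the absorbing set $\mathcal B$}, ensuring the decay rate and constant are independent of $m$. A secondary technical point is verifying that the flow representation of Theorem~\ref{flowformula} closes in the local energy topology with only the finite-energy velocity $\partial_t u$ available as boundary datum---precisely where the $1/3$-derivative loss of the hyperbolic Neumann map is felt, and where the finiteness of the dissipation integral (Corollary~\ref{dissint}) and the localization of the interaction to $\Omega$ intervene.
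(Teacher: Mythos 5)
Your overall architecture---finite-horizon Gronwall plus the exponential decay of plate differences (Theorem \ref{exp}) with damping thresholds calibrated on the absorbing set---matches the paper's strategy for the \emph{plate} component. But your Step 2, the control of the flow difference $\psi_m=\phi_m-\phi$ in $Y_{fl,\rho}$, contains a genuine gap, and it is precisely the gap that the paper's method is designed to avoid. You claim that the Duhamel part of $\psi_m$, driven by the Neumann datum $-(\partial_t+U\partial_x)w_m\,\mathbf 1_\Omega$, is bounded in $H^1(K_\rho)\times L_2(K_\rho)$ by $C(\rho)\sup_\sigma\|(w_m,\partial_t w_m)(\sigma)\|_{Y_{pl}}$, citing Theorem \ref{flowformula} and Corollary \ref{dissint}. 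This is exactly the estimate that fails: with only $\partial_t w_m\in L_2(\Omega)$ available, the hyperbolic Neumann map loses $1/3$ derivative (Tataru), delivering at best $H^{2/3}\times H^{-1/3}$ and \emph{not} local finite energy. The representation formulas themselves confirm this: $\nabla\phi^{**}$ and the expression \eqref{formderiv1} for $\phi^{**}_t$ involve first spatial derivatives of $u_t$, which are not controlled at the energy level (the paper only extracts from \eqref{formderiv1} a \emph{weak} bound against $\zeta\in C_0^\infty$ after integrating by parts). Lemma \ref{compact2}, the quantitative local flow bound, requires $u\in H^{s+\beta}(H^{2+\beta})$ and $u_t\in H^{s+\beta}(H^{1+\beta})$---higher regularity that the difference of two finite-energy trajectories does not possess; this is why the paper's proof of Theorem \ref{regresult} must first propagate higher regularity for data in $\cD(\bT)$ before invoking that lemma. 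Corollary \ref{dissint} (square-integrability in time of the $L_2(\Omega)$ trace) does not recover the missing $1/3$ derivative.

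The paper's proof never estimates the flow difference through the Neumann map. Instead it writes the energy identity for the difference of the two \emph{coupled} systems: the flow-difference energy $\|\nabla\bp\|^2-U^2\|\bp_x\|^2+\|\bp_t\|^2$ (coercive precisely because $U<1$) sits on the left-hand side, the interface terms cancel by the duality between aerodynamic pressure and downwash, and the only term to be controlled is $\int_0^t\langle f_V(u)-f_V(u_m),\bu_t\rangle\,d\tau$, which involves plate quantities alone. That integral is then split at a large time $T^*$: the head is handled by the finite-horizon Gronwall estimate, and the tail is made small uniformly in $m$ via Theorem \ref{exp}. If you replace your Step 2 by this coupled energy identity, the argument closes; as written, the flow estimate does not. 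A secondary point: your final bound asserts a uniform-in-time \emph{Lipschitz} estimate $\sup_t\|S_t(y_m^0)-S_t(y^0)\|^2_{Y_\rho}\le C\|y_m^0-y^0\|_Y^2$, whereas the paper explicitly notes (Section \ref{sensi}) that the uniform-in-time Lipschitz property is open; its proof yields only Hadamard continuity, with a tail term controlled by $E_z^{1/2}(0)$ rather than $E_z(0)$.
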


\begin{remark}
From this point on, when we refer to convergence of full flow-plate trajectories, we consider the state space topologized by the local flow energy convergence, i.e. we consider the norm of the space $Y_{\rho} = Y_{pl} \times  Y_{fl,\rho}$ with any $\rho>0$ fixed. This will yield the final convergence in $\widetilde Y$.\end{remark}

\subsection{Weak to strong---Proof of main theorem \ref{conequil1} from the Theorems \ref{regresult} and \ref{strongcont}}\label{7}
 Given the validity of the two supportive theorems   Theorem \ref{regresult} and Theorem \ref{strongcont} (to be proven below) the  proof of Theorem \ref{conequil1}  is straightforward. We must improve the convergence to the equilibria set  for smooth initial data (Theorem \ref{regresult}) to finite energy initial data residing  in $Y$.  This is done via the  uniform-in-time estimate in Theorem \ref{strongcont}. With slight abuse of notation (for ease of exposition), let $ y^0 = ( u_0,  u_1,  \phi_0,  \phi_1 ) \in Y$ (the initial data) and let  $y^0_m  = ( u^0_{m},  u^1_{m} ,  \phi^0_{m}, \phi^1_{m}  ) \in \cD(\bT) \cap B_R(Y)$ be chosen such that
$ y^0_{m}\rightarrow y^0 $ strongly in $Y$. The above choice is possible by the virtue of the density of $\cD(\bT) $ in $Y$ \cite{webster}.

Let $\epsilon>0$ be given.  We note that by Theorem \ref{strongcont},  we may choose an $M$ sufficiently large so that for all $k>k_0(R)$ and $\beta > \beta_0(R)$ and for all $m \ge M$ $$ \sup_{t > 0} \|S_{t}(y^0 ) -  S_{t} (y^0_{m}) \|_{Y_{\rho}}< \epsilon/2.$$
 Fix $M$. Theorem \ref{regresult} guarantees that 
\begin{equation}\label{start1}
\lim_{t \rightarrow \infty} d_{Y_{\rho}}(S_{t}(y_M^0),\mathcal N)=0.
\end{equation}
Hence, there exists a $T(\epsilon,M)$ so that for all $t>T(\epsilon,M)$
\begin{align}\label{start}
d_{Y_{\rho}}(S_t(y_M^0),\mathcal N) < \dfrac{\epsilon}{2}.
\end{align}
Then
\begin{align}
d_{Y_{\rho}}(S_t(y^0),\mathcal N) \le &~ d_{Y_{\rho}}(S_t(y^0),S_t(y_M^0))+d_{Y_{\rho}}(S_t(y_M^0),S_t(y^0)) \\
< &~\epsilon
\end{align}
for all $t>T$. In the last line we have critically used both of Theorem \ref{regresult} and \ref{strongcont}. This concludes the proof of Theorem \ref{conequil1} from the fundamental theorems.

The bulk of the treatment is thus devoted to the proofs of supporting theorems, and the proof of weak convergence of the full dynamics (Theorem \ref{weakth}).

\section{Preliminaries}
In the following sections we discuss key results from \cite{delay} which provide the existence of an attracting set for the plate dynamics, and also key results utilized in \cite{b-c,b-c-1,springer,ryz,ryz2} concerning a decomposition of the flow component of the dynamics.

\subsection{Decomposition of the flow}
We first decompose the flow problem from \eqref{flow} into two pieces corresponding to zero Neumann data, and zero initial data, respectively:
\begin{equation}\label{flow1}
\begin{cases}
(\partial_t+U\partial_x)^2\phi^*=\Delta \phi^*  & ~\text{ in }~ \realsthree_+ \times (0,T) \\
\phi^*(0)=\phi_0; ~~\phi_t(0)=\phi_1 \\
\Dn \phi^* = 0 & ~\text { in } ~\partial \realsthree_+ \times (0,T)
\end{cases}
\end{equation}
\begin{equation}\label{flow2}
\begin{cases}
(\partial_t+U\partial_x)^2\phi^{**}=\Delta \phi^{**}  & ~\text{ in }~ \realsthree_+ \times (0,T) \\
\phi^{**}(0)=0; ~~\phi_t^{**}(0)=0 \\
\Dn \phi^{**} = h(\xb,t) & ~\text { in } ~\partial \realsthree_+ \times (0,T)
\end{cases}
\end{equation}
where
\begin{equation}\label{h}
h(\xb, t) \equiv u_t + U u_x
\end{equation}
Following the analyses in \cite{ryz,ryz2,springer,b-c,b-c-1} we denote the solution to \eqref{flow1} as $\phi^*$ and the solution to \eqref{flow2} as $\phi^{**}$; then, the full flow solution $\phi$ coming from \eqref{flowplate} has the form $$\phi(t)=\phi^*(t)+\phi^{**}(t)$$ where $\phi^* (t) $ solves (\ref{flow1}) and $\phi^{**} (t) $ solves (\ref{flow2}).
\begin{remark}
The analysis of $\phi^*$ is identical to that given in \cite{springer,ryz,b-c,b-c-1}. However, the treatment of the second part
$\phi^{**} (t)$, which corresponds to the  hyperbolic Neumann map,  is very different due to the loss of derivative in Neumann map. Indeed, with rotational inertia in place one has for finite energy solutions  $h \in C(0, T; H^1(\Omega) ) $.
On the other hand from  \cite{miyatake1973} $ h \in L_2(H^{1/2}(\realstwo) ) \mapsto
\phi^{**} \in  C(H^1(\realsthree_+) \cap C^1(L_2(\realsthree_+)) $---where the latter  is of  finite energy. Thus the Neumann map is compact in this case.
In the absence of rotational inertia one only has  $h \in C(L_2(\realstwo)) $. The above regularity does not produce finite energy
solutions,  with a maximal regularity being $ \phi^{**} \in  C(H^{2/3}(\realsthree_+)) \cap C^1(H^{-1/3}(\realsthree_+)) $ yielding  the loss of $1/3 $ derivative. This loss is optimal and cannot be improved  \cite{tataru}. This fact   clearly underscores that the component-wise analysis of finite energy solutions  successfully performed in the  past literature
\cite{springer,ryz,ryz2} cannot be contemplated.
  \end{remark}
 As mentioned before, for the analysis of $\phi^*$, we use the tools developed in \cite{b-c-1,b-c}.
Using the Kirchhoff type representation for the solution  $\phi^*(\xb,t)$
in $\R_+^3$ (see, e.g., Theorem~6.6.12 in \cite{springer}), we can conclude that
if the initial data   $\phi_0$ and $\phi_1$ are  localized in the ball $ K_{\rho}$,
then by  finite dependence on the domain of the signal in three dimensions  (Huygen's principle),
   one obtains  $\phi^*(\xb,t)\equiv 0$ for all $\xb\in  K_{\rho} $
and $t\ge t_{\rho}$. Thus we have that
\[
\big(\partial_t+U\partial_x\big)tr[\phi^*]\equiv0,~~~\xb\in \Omega,~t\ge t_{\rho}.
\]
Thus $\phi^*$ tends to zero in the sense of the local flow energy, i.e.,  \begin{equation}\label{starstable} \|\nabla \phi^*(t)\|_{L_2( K_{\rho} )}^2 + \|\phi^*_t(t)\|_{L_2( K_{\rho} )} \to 0, ~~ t \to \infty,\end{equation} for any fixed $ \rho>0$.

We now introduce a compactness criterion for the local energy convergence above:
\begin{lemma}\label{compactnesscriterion}
Let $\{(\phi_0^m,\phi_1^m)\}_m^{\infty}$ be a bounded sequence in $Y_{fl}=H^1(\realsthree_+) \times L_2(\realsthree_+)$ and let $\eta>0$. If for any $ \rho>0$ there exists an $N(\rho)$ and $C(\rho)$ so that
$$\|\nabla \phi_0^m\|^2_{\eta, K_{\rho} }+\|\phi_1^m\|^2_{\eta, K_{\rho} } \le C(\rho)$$ for all $m>N(\rho)$ then the sequence $\{(\phi_0^m,\phi_1^m)\}_m^{\infty}$ is compact in $\widetilde Y_{fl}$.
\end{lemma}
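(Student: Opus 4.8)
The plan is to reduce the statement to the classical Rellich--Kondrachov theorem on each half-ball $K_\rho$, and then to pass to the local energy topology by a diagonal extraction over an exhausting sequence of radii. The only nontrivial bookkeeping is in turning the hypothesized $H^\eta$-control of $\nabla\phi_0^m$ into a genuine higher-order Sobolev bound on $\phi_0^m$ over $K_\rho$, which is exactly where the Hardy inequality quoted above enters.

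First I would fix $\rho>0$ and upgrade the hypothesis to a bound in a bona fide Sobolev space on the bounded domain $K_\rho$. From the assumed $Y_{fl}$-boundedness of the sequence we have $\|\nabla\phi_0^m\|_{L_2(\realsthree_+)}+\|\phi_1^m\|_{L_2(\realsthree_+)}\le C$, and the Hardy inequality $\|\phi_0^m\|_{L_2(K_\rho)}^2\le C_\rho\|\nabla\phi_0^m\|_{L_2(\realsthree_+)}^2$ then controls $\phi_0^m$ in $L_2(K_\rho)$. Combining this with the standing assumption $\|\nabla\phi_0^m\|_{\eta,K_\rho}^2+\|\phi_1^m\|_{\eta,K_\rho}^2\le C(\rho)$ (valid for $m>N(\rho)$), and using that $\|f\|_{L_2(K_\rho)}+\|\nabla f\|_{H^\eta(K_\rho)}$ is equivalent to $\|f\|_{H^{1+\eta}(K_\rho)}$, we obtain that $\{\phi_0^m\}$ is bounded in $H^{1+\eta}(K_\rho)$ and $\{\phi_1^m\}$ is bounded in $H^\eta(K_\rho)$ for all large $m$. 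Since $K_\rho$ is a bounded Lipschitz domain (a half-ball) and $\eta>0$, the embeddings $H^{1+\eta}(K_\rho)\hookrightarrow H^1(K_\rho)$ and $H^\eta(K_\rho)\hookrightarrow L_2(K_\rho)$ are compact, so a subsequence of $\{(\phi_0^m,\phi_1^m)\}$ converges strongly in $H^1(K_\rho)\times L_2(K_\rho)$; by the Hardy estimate this is precisely convergence in $\|\cdot\|_{Y_{fl},\rho}$.

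Next I would run this over the exhaustion $\rho_j=j$, $j\in\mathbb N$. Applying the previous step with $\rho=\rho_1$ yields a convergent subsequence; applying it to that subsequence with $\rho=\rho_2$, and iterating, and then taking the diagonal subsequence $\{(\phi_0^{m_k},\phi_1^{m_k})\}$ produces a single subsequence that is Cauchy in $Y_{fl,\rho_j}$ for every $j$ (at stage $j$ only the cofinitely many indices exceeding $N(\rho_j)$ are used, which is harmless). Because each $Y_{fl,\rho_j}$ is complete---its norm being equivalent to that of $H^1(K_{\rho_j})\times L_2(K_{\rho_j})$---the diagonal subsequence has a limit $z_{\rho_j}$ in $Y_{fl,\rho_j}$, and these limits are consistent under restriction, hence define a single element $z\in\widetilde Y_{fl}$. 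Finally, for arbitrary $\rho>0$ choose $j$ with $\rho_j\ge\rho$; since $K_\rho\subset K_{\rho_j}$, convergence in $Y_{fl,\rho_j}$ forces convergence in $Y_{fl,\rho}$. Thus the diagonal subsequence converges to $z$ in $Y_{fl,\rho}$ for every $\rho$, i.e., in $\widetilde Y_{fl}$, which is the asserted compactness.

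I do not anticipate a substantive obstacle here; the argument is essentially routine. The two points meriting care are (i) recovering control of $\phi_0^m$ itself, not merely its gradient, on $K_\rho$---this is the role of the Hardy inequality---and (ii) verifying that the higher-regularity hypothesis on $\nabla\phi_0^m$ in $H^\eta(K_\rho)$ genuinely promotes to an $H^{1+\eta}(K_\rho)$ bound on $\phi_0^m$, so that the compact embedding is applicable on the half-ball geometry. Both are standard facts about Sobolev spaces on bounded Lipschitz domains.
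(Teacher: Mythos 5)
Your argument is correct and is essentially the standard proof that the paper defers to the cited reference (\cite{ryz}, Lemma 10): local Rellich--Kondrachov compactness on each half-ball $K_\rho$ obtained from the hypothesized $H^\eta$ bounds, followed by a diagonal extraction over an exhausting sequence of radii, with the Hardy inequality supplying control of $\phi_0^m$ itself. (Your detour through the equivalence $\|f\|_{H^{1+\eta}(K_\rho)}\sim \|f\|_{L_2(K_\rho)}+\|\nabla f\|_{H^\eta(K_\rho)}$ is avoidable, since the seminorm $\|\cdot\|_{Y_{fl},\rho}$ only requires strong $L_2(K_\rho)$ convergence of $\nabla\phi_0^m$ and $\phi_1^m$, which follows by applying the compact embedding $H^\eta(K_\rho)\hookrightarrow L_2(K_\rho)$ componentwise; but the extra step is harmless.)
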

\noindent This is given as Lemma 10 in \cite[p. 472]{ryz} (where it is proved) and is utilized in a critical way in \cite{ryz2} as well.

For the term $\phi^{**}$ we have the following theorem which provides us with an explicit form of the solution (for a proof, see for instance \cite[Theorem 6.6.10]{springer}).
Below, we utilize the notation: $$f^{\dag}(\xb,t,s,\theta)=f\left(x-\kappa_1(\theta,s,z), y-\kappa_2(\theta,s,z), t-s\right),$$ and $$\kappa_1(\theta,s,z)=Us+\sqrt{s^2-z^2}\sin \theta,~~\kappa_2(\theta,s,z) = \sqrt{s^2-z^2}\cos \theta.$$
\begin{theorem}\label{flowformula}
Considering the problem in \eqref{flow2} with  zero initial flow data, and considering $h(x,y,t) =-(u_t (x,y,t)+Uu_x(x,y,t))\mathbf{1}_{\Omega}$, there exists a time $t^*(\Omega,U)$ such that we have the following representation for the weak solution $\phi^{**}(t)$ for $t>t^{*}$:
\begin{equation}
\phi^{**}(\xb,t) = -\dfrac{\chi(t-z) }{2\pi}\int_z^{t^*}\int_0^{2\pi}(u^{\dag}_t(\xb,t,s,\theta)+Uu^{\dag}_x(\xb,t,s,\theta))d\theta ds.
\end{equation}
where $\chi(s) $ is the Heaviside function. The time $t^*$ is given by:
\begin{equation*}
   t^*=\inf \{ t~:~\xb(U,\theta, s) \notin \Omega \text{ for all } (x,y) \in \Omega, ~\theta \in [0,2\pi], \text{ and } s>t\}
\end{equation*} with $\xb(U,\theta,s) = (x-(U+\sin \theta)s,y-s\cos\theta) \subset \realstwo$ (not to be confused with $\xb$ having no dependencies noted, which is simply $\xb = (x,y)$).

Moreover, we have the following point-wise formula for the derivatives in $t$ and $\xb$ \cite[p. 480]{ryz2}(which are justified for plate solutions with initial data in $\cD(\bT)$, and can be taken distributionally for data in $Y$):
\begin{align}\label{formderiv1}
\phi^{**}_t&(\xb, t) =  \dfrac{1}{2\pi}\Big\{\int_0^{2\pi}u_t^{\dag}(\xb,t,t^*,\theta) d\theta -\int_0^{2\pi} u_t^{\dag}(\xb,t,z,\theta )d\theta \\\nonumber
&+U\int_z^{t^*}\int_0^{2\pi}[\partial_x u_t^{\dag}](\xb,t,s,\theta) d\theta ds+\int_z^{t^*}\int_0^{2\pi}\dfrac{s}{\sqrt{s^2-z^2}}[M_{\theta}u_t^{\dag}](\xb,t,s,\theta) d\theta ds\Big\} %\\\label{formderiv2}
%\partial_z\phi^{**}(\xb, t) =& (\partial_t+U\partial_x)u(x-Uz,y,t-z)-\dfrac{1}{2\pi}\int_0^{2\pi}\int_z^{t^*}\dfrac{z}{\sqrt{s^2-z^2}}[M_{\theta}(\partial_t+U\partial_x)u^*](s)d\theta ds
\end{align}
with $M_{\theta} = \sin\theta\partial_x+\cos \theta \partial_y$.
\end{theorem}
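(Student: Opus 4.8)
The plan is to derive the representation in three stages---freeze the convection, reflect and apply a retarded potential, then translate back and truncate---and to read off \eqref{formderiv1} by differentiation together with an integration by parts in $s$.

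First I would remove the convection by the Galilean substitution $\phi^{**}(x,y,z,t)=\psi(x-Ut,y,z,t)$ (passing to the frame co-moving with the flow). A direct computation gives $(\partial_t+U\partial_x)^k\phi^{**}(x,y,z,t)=(\partial_t^k\psi)(x-Ut,y,z,t)$ for $k=1,2$ and $\Delta\phi^{**}(x,y,z,t)=(\Delta\psi)(x-Ut,y,z,t)$, so $\psi$ solves the classical wave equation $\psi_{tt}=\Delta\psi$ in $\realsthree_+\times(0,T)$ with zero Cauchy data. Since the outward normal on $\{z=0\}$ is $-\partial_z$, the condition $\Dn\phi^{**}=h$ reads $\partial_z\phi^{**}\big|_{z=0}=-h=(u_t+Uu_x)\mathbf{1}_\Omega$, which transfers to $\partial_z\psi\big|_{z=0}=\tilde G$ with $\tilde G(X,y,t)=\big[(u_t+Uu_x)\mathbf{1}_\Omega\big](X+Ut,y,t)$.

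Next, I would extend $\psi$ evenly in $z$; the induced jump of $\partial_z\psi$ across $\{z=0\}$ makes the reflected field a solution of $\psi_{tt}-\Delta\psi=-2\,\tilde G\,\delta(z)$ on $\realsthree\times(0,T)$ with zero data, so the retarded potential (Kirchhoff--Duhamel formula) gives $\psi(X,y,z,t)=-\tfrac{1}{2\pi}\int_{\realstwo}r^{-1}\tilde G(\xi,\zeta,t-r)\,d\xi\,d\zeta$ with $r=\sqrt{(X-\xi)^2+(y-\zeta)^2+z^2}$. Passing to retarded polar coordinates in $(\xi,\zeta)$ about $(X,y)$, namely $\xi=X-\sqrt{r^2-z^2}\sin\theta$, $\zeta=y-\sqrt{r^2-z^2}\cos\theta$---for which $r^{-1}\,d\xi\,d\zeta=dr\,d\theta$ with $r\in[z,\infty)$, $\theta\in[0,2\pi)$---and then undoing $X=x-Ut$, the first argument of $\tilde G$ becomes $x-Ur-\sqrt{r^2-z^2}\sin\theta=x-\kappa_1(\theta,r,z)$ and the second $y-\kappa_2(\theta,r,z)$. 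With $s=r$ this yields $\phi^{**}(\xb,t)=-\tfrac{\chi(t-z)}{2\pi}\int_z^{\infty}\int_0^{2\pi}(u_t^{\dag}+Uu_x^{\dag})(\xb,t,s,\theta)\,d\theta\,ds$, the factor $\chi(t-z)$ encoding finite speed of propagation. Because $u$ (hence $u_t,u_x$, extended by zero outside $\Omega$) is supported in $\bar\Omega$, the integrand vanishes unless the source point $(x-\kappa_1(\theta,s,z),y-\kappa_2(\theta,s,z))$ lies in $\Omega$; a short geometric argument---using that $\Omega$ is bounded and $U<1$, with $t^*$ obtained from the $z=0$ trace $\kappa_1=(U+\sin\theta)s$, $\kappa_2=s\cos\theta$ as in the statement---shows that for $(x,y)\in\Omega$ these points leave $\Omega$ once $s$ exceeds $t^*=t^*(\Omega,U)$, uniformly in $\theta$ and $z\ge0$. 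Hence for $t>t^*$ the upper limit drops to $t^*$, and then the retarded time $t-s\ge t-t^*>0$ always lies in the range where $u$ is defined, so the formula is unambiguous.

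For the derivative identity \eqref{formderiv1} I would differentiate the representation in $t$ under the integral and use $\partial_t[g(x-\kappa_1,y-\kappa_2,t-s)]=-\partial_s[g(\cdots)]-(\partial_s\kappa_1)\partial_x[g(\cdots)]-(\partial_s\kappa_2)\partial_y[g(\cdots)]$, with $\partial_s\kappa_1=U+\tfrac{s}{\sqrt{s^2-z^2}}\sin\theta$ and $\partial_s\kappa_2=\tfrac{s}{\sqrt{s^2-z^2}}\cos\theta$, to trade the $t$-derivative of $u_t^{\dag}$ for an $s$-derivative plus tangential derivatives $M_\theta=\sin\theta\,\partial_x+\cos\theta\,\partial_y$; integrating the $\partial_s$-part by parts over $[z,t^*]$ produces the endpoint terms at $s=t^*$ and $s=z$ and, after collecting terms, yields \eqref{formderiv1}. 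The hard part will not be these (largely bookkeeping) identities but their rigorous justification at the regularity actually available: at finite energy $h=-(u_t+Uu_x)\mathbf{1}_\Omega$ is only $L_2$ in time with a jump across $\partial\Omega$, so the reflected source $-2\tilde G\,\delta(z)$ is a genuinely singular distribution and, by the sharp trace theory (the optimal $1/3$-derivative loss, \cite{tataru}), $\phi^{**}$ has at best the regularity $C([0,T];H^{2/3}(\realsthree_+))\cap C^1([0,T];H^{-1/3}(\realsthree_+))$. Consequently I would first establish the pointwise representation and the integration by parts for smooth data $y_0\in\cD(\bT)$---where $u_t+Uu_x\in C([0,T];H_0^2(\Omega))$ and every step is classical---and only then extend to data in $Y$ by density and continuous dependence, reading \eqref{formderiv1} distributionally; the uniformity of $t^*$ over $(x,y)\in\Omega$ is what makes the truncation stable under this passage to the limit.
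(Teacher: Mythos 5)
The paper itself does not prove Theorem \ref{flowformula}; it quotes it from \cite[Theorem 6.6.10]{springer} and \cite{ryz2}. Your derivation --- Galilean change of frame to remove the convective derivative, even reflection across $\{z=0\}$ converting the Neumann data into a surface source $-2\tilde G\,\delta(z)$, the retarded potential, retarded polar coordinates, and then the chain-rule/integration-by-parts scheme for the time derivative --- is precisely the standard argument behind those citations, and the computations (the Jacobian $r^{-1}d\xi\,d\zeta=dr\,d\theta$, the identification of $\kappa_1,\kappa_2$ after undoing $X=x-Ut$, the integrable weight $s/\sqrt{s^2-z^2}$, the endpoint terms at $s=t^*$ and $s=z$, and the regularization via $\cD(\bT)$ data) all check out. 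One bookkeeping remark: your method yields \eqref{formderiv1} with $(u_t+Uu_x)^{\dag}$ in every slot rather than $u_t^{\dag}$; the discrepancy equals $-\tfrac{U}{2\pi}\int_z^{t^*}\!\!\int_0^{2\pi}u_{xt}^{\dag}\,d\theta\,ds$, which does not vanish in general, so treat the printed formula as an abbreviation rather than something your computation must reproduce literally.

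There is, however, one genuine gap: the claim that the source points leave $\Omega$ for $s>t^*$ ``uniformly in $\theta$ and $z\ge 0$'' is false, and it is the only step of your argument that would not survive scrutiny. The source point for delay $s$ seen from height $z$ is $(x,y)-(Us+\rho\sin\theta,\,\rho\cos\theta)$ with $\rho=\sqrt{s^2-z^2}$; choosing $z=s\sqrt{1-U^2}$ (so $\rho=Us$) and $\theta=3\pi/2$ returns the source point to $(x,y)$ itself for \emph{every} $s$, so for observation points above the plate the integrand does not vanish for all $s>t^*$. The truncation horizon genuinely depends on $z$ (it grows like $z/\sqrt{1-U^2}$) and on $\operatorname{dist}((x,y),\Omega)$; the $z$-independent $t^*$ of the statement is exactly the right threshold only on the trace $z=0$ with $(x,y)$ in (a neighborhood of) $\Omega$, which is all that the reduction Theorem \ref{rewrite} requires, and the paper's other use of the formula (pairing $\phi^{**}_t$ against $\zeta\in C_0^{\infty}(K_{\rho})$) only needs a finite, $\rho$-dependent horizon, so the downstream arguments are unaffected. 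Your proof should therefore either restrict the truncated representation to the boundary trace over $\Omega$, or keep the upper limit in the form $\min(t,\,t^*(\xb))$ and record that $t^*(\xb)$ is finite and locally uniform; as written, the uniform-in-$z$ claim is the step that fails.
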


We note that  with  $(\phi_0,\phi_1) \in H^1(\R_+^3)\times  L_2(\R_+^3) $
one obtains \cite{miyatake1973,supersonic}
 $(\phi^*(t),  \phi^{*}_t(t)) \in   H^1(\R_+^3)\times  L_2(\R_+^3)$. Thus, by Theorem \ref{nonlinearsolution}
 we also have that
 $$
 (\phi^{**}(t),  \phi^{**}_t(t)) \in  H^1(\R_+^3)\times  L_2(\R_+^3).
 $$
 \begin{remark} Note that \textit{this last property is not valid for a flow solution with $L_2$ boundary Neumann data.} The general theory will provide at most $ H^{2/3}(\R_+^3\times [0,T])$ \cite{tataru}.  However, the improved regularity is due to the interaction with the plate and the resulting cancellations on  the interface. Moreover, we also obtain a meaningful ``hidden trace regularity" for the aeroelastic potential on the boundary of the structure \cite{supersonic}:
  \begin{equation}\label{trace}
  (  \partial _{t} + U \partial_{x} )\gamma [ \phi ] \in L_2(0, T; H^{-1/2}(\Omega) )
  \end{equation}
  where $T$ is arbitrary. \end{remark}

Additionally we have the following bounds on solutions in higher norms. These were critical in the previous analyses of this problem which made use of inertial terms $-\alpha \Delta u_{tt}$ and strong damping $-k_2\Delta u_t$ \cite{b-c,b-c-1} or thermal smoothing \cite{ryz,ryz2}. In particular the following inequality has been shown in \cite{ryz}  labeled Lemma 8 (p. 469) and (56) (p. 479):
\begin{lemma}\label{compact2}
For \eqref{flow2} taken with $h(\xb,t)=-(u_t+Uu_x)\mathbf{1}_{\Omega}$, we have
\begin{align}
\|\nabla  \phi^{**}(t)\|^2_{\beta, K_{\rho} }&+\|\phi_t^{**}(t)\|^2_{\beta, K_{\rho} }\nonumber \\
\le  ~C(\rho)& \big\{\|u(t)\|^2_{H^{s+\beta}(t-t^*,t;H_0^{2+\beta}(\Omega))}+\|u_t(t)\|^2_{H^{s+\beta}(t-t^*,t;H_0^{1+\beta}(\Omega))} \big\}
\end{align} for $0<s+\beta<1/2$ and $t>t^*(U,\Omega)$.
\end{lemma}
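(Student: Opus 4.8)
\textbf{Proof strategy for Lemma \ref{compact2}.} The plan is to argue directly from the retarded (Kirchhoff-type) representation of $\phi^{**}$ and its derivatives furnished by Theorem \ref{flowformula}, extracting the localized gain of regularity by a combination of a tangential change of variables, Minkowski's integral inequality, and an Abel-type fractional-integration estimate in the normal variable; this is essentially the route of \cite{ryz} (Lemma~8, p.~469, and (56), p.~479), which we would follow and adapt. Since $t>t^*$, the formula
$$\phi^{**}(\xb,t)=-\frac{\chi(t-z)}{2\pi}\int_z^{t^*}\int_0^{2\pi}\big(u_t^{\dag}+Uu_x^{\dag}\big)\,d\theta\,ds$$
together with \eqref{formderiv1} expresses $\phi^{**}$, $\partial_z\phi^{**}$, and $\phi_t^{**}$ at a point $\xb=(x,y,z)$ with $z\le\rho$ as averages of $u,u_t$ (and their first spatial derivatives) over retarded times $t-s\in[t-t^*,t]$ at the shifted points $(x-\kappa_1(\theta,s,z),\,y-\kappa_2(\theta,s,z))$. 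Two facts make everything local: (i) $s$ ranges over the bounded interval $[z,t^*]$ with $t^*=t^*(U,\Omega)$ fixed; and (ii) the tangential map $(x,y)\mapsto(x-\kappa_1,y-\kappa_2)$ has unit Jacobian, so each slice $u^{\dag}(\cdot,t,s,\theta)$, $u_t^{\dag}(\cdot,t,s,\theta)$ may be transferred to $u(\cdot,t-s)$, $u_t(\cdot,t-s)$ on a translate of $\Omega$ with no loss of derivatives.

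\textbf{Integer-order base case.} First I would establish the estimate for $\beta=0$, keeping the normal direction separate. The tangential derivatives $\partial_x,\partial_y$ and the time derivative fall directly on $u,u_t$ after differentiating under the integral sign (using \eqref{formderiv1} for $\phi_t^{**}$); the only kernels that appear are $\frac{s}{\sqrt{s^2-z^2}}$ (in the $M_\theta u_t^{\dag}$ term) and, for $\partial_z\phi^{**}$, $\frac{z}{\sqrt{s^2-z^2}}$ together with the clean boundary term $(u_t+Uu_x)(x-Uz,y,t-z)$ from the endpoint $s=z$. Both kernels are integrable in $s$ on $[z,t^*]$, uniformly for $z\le\rho$, since $\int_z^{t^*}\frac{s}{\sqrt{s^2-z^2}}\,ds=\sqrt{(t^*)^2-z^2}\le t^*$ and $\int_z^{t^*}\frac{z}{\sqrt{s^2-z^2}}\,ds$ is likewise bounded. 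Bringing the $L_2(K_\rho)$ norm inside the $s,\theta$ integrals by Minkowski's inequality and invoking (ii) to estimate each slice by $\|u(t-s)\|_{H_0^2(\Omega)}+\|u_t(t-s)\|_{H_0^1(\Omega)}$, one obtains
$$\|\nabla\phi^{**}(t)\|_{L_2(K_\rho)}^2+\|\phi_t^{**}(t)\|_{L_2(K_\rho)}^2\le C(\rho)\big\{\|u\|_{L_2(t-t^*,t;H_0^2(\Omega))}^2+\|u_t\|_{L_2(t-t^*,t;H_0^1(\Omega))}^2\big\},$$
which is the statement at $\beta=0$ (and the same computation with one more tangential derivative prepares the bootstrap).

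\textbf{The fractional/normal estimate --- the main obstacle.} The difficulty in passing to $H^\beta$ is entirely in the normal variable: viewed as functions of $z$ for fixed $s$, both $\frac{s}{\sqrt{s^2-z^2}}$ and $\frac{z}{\sqrt{s^2-z^2}}$ have an $(s-z)^{-1/2}$-type singularity at the light cone $z=s$, and a second normal differentiation produces the non-integrable weight $(s^2-z^2)^{-3/2}$ --- this is the concrete manifestation of the failure of the uniform Lopatinski condition and the derivative loss for the hyperbolic Neumann map. To capture the $H^\beta$ regularity I would Fourier-transform in the tangential variables $(x,y)$: the $\theta$-average of $M_\theta u^{\dag}$ becomes multiplication by a Bessel-type symbol decaying only like $\langle\xi\rangle^{-1/2}$, while $\int_z^{t^*}(\cdot)\,ds$ against the $(s-z)^{-1/2}$ kernel is a half-order Abel fractional integral, mapping $L_2$ into $H^{\sigma}$ in the $z$-variable only for $\sigma<1/2$. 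Composing these two mechanisms --- and using the uniform-in-time bound of Theorem \ref{nonlinearsolution} to absorb remainder terms --- yields control of $\|\nabla\phi^{**}(t)\|_{\beta,K_\rho}$ and $\|\phi_t^{**}(t)\|_{\beta,K_\rho}$ at the cost of $H^{s+\beta}$ regularity \emph{in time} of $u,u_t$, the combined budget of the $\langle\xi\rangle^{-1/2}$ symbol loss and the half-order Abel gain being exactly the constraint $0<s+\beta<1/2$. Interpolating this against the integer-order estimate of the previous step and absorbing the fixed constants depending on $t^*(U,\Omega)$ and $\rho$ into $C(\rho)$ gives the stated inequality. The only genuinely delicate point is this fractional normal estimate near the light cone; everything else is bookkeeping on the explicit formula of Theorem \ref{flowformula}.
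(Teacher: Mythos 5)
The paper does not prove Lemma \ref{compact2} at all: it is imported verbatim from Ryzhkova \cite{ryz} (Lemma 8, p.~469, and (56), p.~479), so there is no in-paper argument to compare yours against line by line. Your route is the same as the cited source's --- direct estimation of the retarded-potential representation of Theorem \ref{flowformula} --- and your integer-order base case is correct and essentially complete: the kernels $s(s^2-z^2)^{-1/2}$ and $z(s^2-z^2)^{-1/2}$ are indeed uniformly integrable on $[z,t^*]$, the translations $(x,y)\mapsto(x-\kappa_1,y-\kappa_2)$ are measure-preserving, and Minkowski/Cauchy--Schwarz plus an exchange of the $z$ and $s$ integrations closes the $L_2(K_\rho)$ bound. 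You also correctly locate the entire substance of the lemma in the fractional normal estimate near the light cone $z=s$ and correctly identify why the constraint is $0<s+\beta<1/2$ (the $(s-z)^{-1/2}$ singularity traded against fractional time regularity of $u,u_t$). That step, however, is only described, not executed --- the Bessel-symbol decay, the Abel half-integration in $z$, and the bookkeeping that makes the ``combined budget'' come out to exactly $s+\beta<1/2$ are asserted rather than proved --- so as a standalone proof the decisive estimate is still owed; since the paper itself delegates precisely this computation to \cite{ryz}, your proposal is faithful to the paper's treatment but no more complete than it.
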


\subsection{Global Attracting Sets  for Plate Dynamics}\label{attractorresult}

\begin{theorem}\label{rewrite}
Let the hypotheses of Theorem~\ref{nonlinearsolution} be in force,
and $(u_0,u_1;\phi_0,\phi_1) \in H_0^2(\Omega) \times L_2(\Omega) \times H^1(\realsthree_+) \times L_2(\realsthree_+)$.
Assume that there exists an $\rho_0$ such that $\phi_0(\xb) = \phi_1(\xb)=0$ for $|\xb|>\rho_0$.
Then the there exists a time $t^{\#}(\rho_0,U,\Omega) > 0$ such that for all $t>t^{\#}$ the weak solution $u(t)$ to
(\ref{flowplate})  satisfies the following equation:
\begin{equation}\label{reducedplate}
u_{tt}+\Delta^2u+ku_t+\beta u-[u,v(u)+F_0]=p_0-(\partial_t+U\partial_x)u-q^u(t)
\end{equation}
with
\begin{equation}\label{potential}
q^u(t)=\dfrac{1}{2\pi}\int_0^{t^*}ds\int_0^{2\pi}d\theta [M^2_{\theta}\widehat u](x-(U+\sin \theta)s,y-s\cos \theta, t-s).
\end{equation}
Here, $\widehat u$ is the extension
 of $u$
   by 0 outside of $\Omega$. $M_{\theta} $ is a first order differential operator given by:
$M_{\theta} \equiv \sin \theta \partial_x + \cos \theta \partial_y $
\end{theorem}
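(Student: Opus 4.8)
The plan is to combine the flow decomposition $\phi = \phi^* + \phi^{**}$ with the two structural facts already assembled in Section 4. First I would handle $\phi^*$: since the initial flow data $(\phi_0,\phi_1)$ are supported in $K_{\rho_0}$, the Kirchhoff-type representation for the homogeneous-Neumann flow problem \eqref{flow1} together with the strong Huygens principle in $\realsthree_+$ gives a finite speed of propagation statement --- for each $\rho$ there is $t_\rho$ with $\phi^*(\xb,t)\equiv 0$ for $\xb\in K_\rho$, $t\ge t_\rho$. Applying this on the plate footprint $\Omega$ (contained in some $K_\rho$) yields $(\partial_t+U\partial_x)tr[\phi^*]\equiv 0$ on $\Omega$ for $t\ge t_\rho$, so the aerodynamic pressure contribution coming from $\phi^*$ disappears after a finite time. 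This is exactly the content behind \eqref{starstable} and requires no regularity of $u$ --- only localized flow data.

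Next I would substitute the explicit representation of $\phi^{**}$ from Theorem \ref{flowformula} into the pressure term $p = p_0 + (\partial_t+U\partial_x)tr[\phi^{**}]$ appearing in \eqref{flowplate}. Taking the trace at $z=0$ in the Kirchhoff-type formula for $\phi^{**}$ and then applying the operator $(\partial_t+U\partial_x)$, one wants to show that for $t$ beyond the ``wrap-around'' time $t^\#$ (built from $t^*(\Omega,U)$ and, implicitly, $\rho_0$ through the time it takes $\phi^*$ to clear $\Omega$), the resulting expression collapses to $-(\partial_t+U\partial_x)u - q^u(t)$ with $q^u$ as in \eqref{potential}. The mechanism is the same integration-by-parts-in-the-representation computation carried out in \cite{b-c-1,b-c,springer}: the $u_t^{\dag}$ terms at the endpoint $s=z$ (evaluated at $z=0$) reconstruct the instantaneous downwash $(\partial_t+U\partial_x)u(\xb,t)$, while the remaining terms --- the endpoint at $s=t^*$, the $x$-derivative term, and the $M_\theta$-term in \eqref{formderiv1} --- reassemble, after moving one more derivative onto $u$ and using $h = u_t+Uu_x$, into the single delayed ``aerodynamic memory'' term $q^u(t)$ involving $M_\theta^2\widehat u$ integrated over the truncated backward light cone. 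Here $\widehat u$ is the zero-extension of $u$, which is legitimate since $u\in H^2_0(\Omega)$; the truncation of the $s$-integral to $[0,t^*]$ is precisely the statement that points outside $\Omega$ contribute nothing, which is where $t>t^\#$ is used.

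The two pieces are then glued: for $t>t^\# := \max\{t_{\rho(\Omega)}, t^*(\Omega,U)\}$ (a time depending only on $\rho_0$, $U$, $\Omega$), the $\phi^*$-pressure vanishes and the $\phi^{**}$-pressure equals $-(\partial_t+U\partial_x)u - q^u(t)$, so \eqref{flowplate} for the plate component becomes exactly \eqref{reducedplate}. I would state this at the level of weak solutions, noting (as the excerpt does) that the pointwise derivative formulas in Theorem \ref{flowformula} are rigorous for data in $\cD(\bT)$ and pass to general $Y$-data by density together with the continuity of both sides in the energy topology. The main obstacle --- really the only non-bookkeeping point --- is justifying that the trace-and-differentiate operations on the singular Kirchhoff kernel (the $s/\sqrt{s^2-z^2}$ weight, the endpoint terms) are valid at $z=0$ and produce no extra distributional contributions beyond $q^u$; this is controlled because the kernel singularity is integrable and because the $z\to 0^+$ limit of the endpoint term at $s=z$ is exactly what generates the local term $(\partial_t+U\partial_x)u$, a cancellation that is special to the flow-plate coupling (the same cancellation responsible for the improved regularity of $\phi^{**}$ noted after Theorem \ref{flowformula}). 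All of this is available from \cite{springer,b-c,b-c-1,ryz2}, so the proof is essentially an assembly of those computations in the present ($\alpha=0$, subsonic) setting.
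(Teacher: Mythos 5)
Your proposal is correct and follows essentially the same route the paper takes: the decomposition $\phi=\phi^*+\phi^{**}$, elimination of $\phi^*$ on $\Omega$ after finite time via the Kirchhoff representation and the strong Huygens principle, and substitution of the explicit formula for $\phi^{**}$ from Theorem \ref{flowformula} into the aerodynamic pressure to produce $-(\partial_t+U\partial_x)u-q^u(t)$. The paper itself only sketches this and defers the computation to \cite{b-c,b-c-1,springer}, which is precisely the assembly you describe.
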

\noindent The proof of this theorem depends on the decomposition of the flow given in the previous section. See \cite{b-c,b-c-1, springer} for details.
\begin{remark}
This extremely helpful theorem first appeared as a heuristic in \cite{kras} and was used in this way for many years; it was later made rigorous in \cite{Chu92b} and used extensively in \cite{springer,delay}.
\end{remark}

 Thus, after some time, the behavior of the flow can be captured by the aerodynamical pressure term $p(t)$ in the form of a reduced delayed potential.  Theorem~\ref{rewrite} allows us (assuming that the flow data is compactly supported) to suppress the dependence of the problem on the flow variable $\phi$.
Here we emphasize that the structure of aerodynamical pressure \eqref{aero-dyn-pr}  leads to the velocity term  $-u_t$ on the RHS of \eqref{reducedplate}.

We may utilize this as natural damping appearing in the structure of the reduced flow pressure by moving this term to the LHS. This was the topic of the treatment \cite{delay}, wherein this damping---acquired from the flow---was sufficient to obtain convergence to a compact set {\em for the (delayed) plate dynamics}.  While such result has been known for plates with rotational terms \cite{springer}, its validity for non-rotational model  has been established only recently \cite{delay}.
 Precise formulation of this result is  given below as a theorem:

 \begin{theorem}\label{th:main2}
Suppose $0\le U \ne 1$,  $F_0 \in H^{4}(\Omega)$ and $p_0 \in L_2(\Omega)$. We may also take $k \ge 0$, $\beta \ge 0$ on $\Omega$.
 Then there exists a compact set $\mathscr{U} \subset H_0^2(\Omega) \times L_2(\Omega)$ of finite fractal dimension such that $$\lim_{t\to\infty} d_{Y_{pl}} \Big( (u(t),u_t(t)),\mathscr U\Big)=\lim_{t \to \infty}\inf_{(\nu_0,\nu_1) \in \mathscr U} \big( \|u(t)-\nu_0\|_2^2+\|u_t(t)-\nu_1\|^2\big)=0$$
for any weak solution $(u,u_t;\phi,\phi_t)$ to (\ref{flowplate}), and with
initial data
$$
(u_0, u_1;\phi_0,\phi_1) \in  H_0^2(\Omega)\times L_2(\Omega)\times H^1(\realsthree_+)\times L_2(\realsthree_+)
$$
which are
localized  in $\R_+^3$ (i.e., $\phi_0(\xb)=\phi_1(\xb)=0$ for $|\xb|>\rho_0$ for some $ \rho_0>0$). We have the extra regularity $\mathscr{U} \subset \big(H^4(\Omega)\cap H_0^2(\Omega) \big) \times H^2(\Omega)$, and any plate trajectory $(u,u_t)$ on the plate attracting set (namely with initial $(u_0,u_1) \in \mathscr U$) has the additional property that $$(u,u_{t},u_{tt}) \in C\left((0,\infty);(H^4\cap H_0^2)(\Omega) \times H_0^2(\Omega) \times L_2(\Omega)\right).$$
\end{theorem}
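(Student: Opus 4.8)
The plan is to use the reduction of Theorem~\ref{rewrite} to turn the flow--plate interaction into a self-contained delayed von Karman plate, and then to build a compact, finite-dimensional attractor for the latter through a \emph{quasi-stability} (stabilizability) estimate in the spirit of Chueshov--Lasiecka. First, by Theorem~\ref{rewrite}, for $t>t^{\#}(\rho_0,U,\Omega)$ the plate component of any localized-data solution satisfies \eqref{reducedplate}; moving the flow-induced velocity term $-u_t$ to the left-hand side gives a delayed plate with effective damping $k+1>0$,
$$u_{tt}+\Delta^2 u+(k+1)u_t+\beta u-[u,v(u)+F_0]=p_0-Uu_x-q^u(t),$$
where $q^u(t)$ from \eqref{potential} depends on $\{u(\tau):\tau\in[t-t^*,t]\}$ and is of second order in space. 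I regard this as a dynamical system $(\widetilde S_t,\widetilde H)$ on $\widetilde H=H_0^2(\Omega)\times L_2(\Omega)\times L_2(-t^*,0;H_0^2(\Omega))$ with state $(u(t),u_t(t),u^t)$, $u^t(s):=u(t+s)$; well-posedness on $\widetilde H$ follows from Theorem~\ref{nonlinearsolution} together with the regularity of $q^u$.

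\textit{Step 2 (ultimate boundedness).} Using a Lyapunov-type functional built from the natural energy of the reduced equation augmented by terms involving $\langle u_t,u\rangle$ and an integral over the delay window, the $(k+1)\int_\Omega|u_t|^2$ dissipation, estimates of the delay forcing of the type in Lemma~\ref{compact2}, and the lower-frequency control of Lemma~\ref{l:epsilon}, I would derive a differential inequality yielding a bounded absorbing set $\mathcal B\subset\widetilde H$. The non-conservative forces $-Uu_x$ and $-q^u$, being subordinate to the principal elliptic part, are absorbed after integration by parts in time.

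\textit{Step 3 (quasi-stability --- the crux).} For the difference $z=u^1-u^2$ of two solutions issuing from $\mathcal B$ one must prove
$$E_z(t)\le C\,e^{-\gamma t}E_z(0)+C\sup_{\tau\in[0,t]}\mu\big(z(\tau)\big)^2,$$
where $E_z$ is the energy of the difference equation and $\mu$ is a seminorm compact on $H_0^2(\Omega)$ (e.g.\ $\|\cdot\|_{2-\epsilon}$) depending only on the displacement. This is the main obstacle: in the non-rotational case $z_t$ lies only in $L_2(\Omega)$ and the von Karman bracket is not compact on the finite-energy space, so the soft compactness arguments available when $\alpha>0$ or under thermal smoothing are unavailable. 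The estimate is obtained by multiplier methods (using $z_t$ and $z$, with a space--time flux decomposition), exploiting: (i) the $(k+1)$-damping; (ii) sharp $L_\infty$-type bounds for the Airy stress function, which let one write $[u^1,v(u^1)]-[u^2,v(u^2)]$ as damping/energy terms plus a remainder controlled by $\mu(z)$; and (iii) the fact that the difference of delay potentials $q^{z}$, after integrating over $[t-t^*,t]$, is dominated by $\int_{t-t^*}^{t}\|\Delta^{1/2+\epsilon}z\|^2\,d\tau$, again a compact lower-order quantity. By the abstract Chueshov--Lasiecka criterion this gives asymptotic smoothness, hence, with Step~2, a compact global attractor $\widetilde{\mathscr U}\subset\widetilde H$.

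\textit{Step 4 (dimension, regularity, projection).} Applying the quasi-stability inequality on $\widetilde{\mathscr U}$ itself gives finite fractal dimension by the usual covering argument. Writing the equation satisfied by $w=u_t$ (differentiate in time; the bracket produces $[w,v(u)+F_0]+2[u,v(u,w)]$ and the delay forcing $q^w$ of the same spatial order), the same energy/quasi-stability method bounds $(u_t,u_{tt})$ in $H_0^2(\Omega)\times L_2(\Omega)$ uniformly on $\widetilde{\mathscr U}$; then
$$\Delta^2 u=p_0-Uu_x-q^u-u_{tt}-(k+1)u_t-\beta u+[u,v(u)+F_0]\in L_2(\Omega),$$
and elliptic regularity together with the sharp $H^4$ regularity of $v(u)$ gives $u\in H^4\cap H_0^2$, $u_t\in H^2$ on the attractor. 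Finally put $\mathscr U:=\{(u_0,u_1):(u_0,u_1,u^0)\in\widetilde{\mathscr U}\}$; compactness, finite dimension and the stated regularity pass to this continuous projection, and since for $t>t^{\#}$ the original plate trajectory is a trajectory of $\widetilde S_t$, both the attraction $d_{Y_{pl}}\big((u(t),u_t(t)),\mathscr U\big)\to 0$ and the regularity of trajectories lying on $\mathscr U$ follow.
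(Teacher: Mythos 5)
You have correctly identified the overall architecture of the argument: this theorem is not proved in the present paper but is quoted from \cite{delay}, and the proof there does indeed proceed by (i) reducing to the delayed plate equation via Theorem \ref{rewrite} so that the flow contributes the damping term $-u_t$, (ii) constructing an absorbing set by a Lyapunov argument, and (iii) using a quasi-stability estimate to obtain finite fractal dimension and the extra smoothness of $\mathscr U$. Your Steps 1, 2 and 4 are consistent with that scheme.

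However, your Step 3 contains a genuine gap, and it is precisely the point the paper itself flags. You propose to prove the quasi-stability inequality for differences of \emph{arbitrary} solutions issuing from the absorbing ball $\mathcal B$, and to deduce asymptotic smoothness (hence existence of the attractor) from it. Two things go wrong. First, your claim (iii) that the difference of delay potentials $q^{z}$ is controlled by $\int_{t-t^*}^{t}\|\Delta^{1/2+\epsilon}z\|^2\,d\tau$, ``a compact lower-order quantity,'' is false: $q^u$ is built from $M_\theta^2\widehat u$, a genuinely second-order operator, so the available bound is $\|q^{z}(t)\|^2\le C\int_{t-t^*}^{t}\|\Delta z(\tau)\|^2\,d\tau$ --- the \emph{full} $H^2$ norm of the difference over the delay window, which is not compact relative to the state space $H_0^2(\Omega)\times L_2(\Omega)\times L_2(-t^*,0;H_0^2(\Omega))$. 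Second, to absorb the resulting term $\langle q^{z},z_t\rangle\le\delta\|q^{z}\|^2+C\delta^{-1}\|z_t\|^2$ into the dissipation one needs $\delta$ small (so that the delay integral is dominated by the auxiliary Lyapunov term) and hence a large coefficient on $\|z_t\|^2$, i.e.\ large viscous damping $k$ --- exactly as in the proof of Theorem \ref{exp} here, which requires $k\ge k_0(R)$ and $\beta\ge\beta_0(R)$. Since Theorem \ref{th:main2} allows $k=\beta=0$ (effective damping coefficient only $1$ from the flow), your route to asymptotic smoothness does not close. The paper states explicitly that in \cite{delay} the quasi-stability estimate is established \emph{only on the attractor}, using its compactness and invariance, and that extending it to an arbitrary forward-invariant bounded set requires taking $k$ sufficiently large. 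Consequently, the existence of the compact attracting set must first be obtained by a separate asymptotic-smoothness argument (a Ball/Khanmamedov-type energy or compensated-compactness method that does not rely on quasi-stability); only afterwards is the quasi-stability inequality proved on the already-compact invariant set $\mathscr U$ to yield finite dimension and the $H^4\times H^2$ regularity. Your proposal inverts this logical order in a way that cannot be repaired under the stated hypotheses on the damping.
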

\begin{remark}
We emphasize that, above, we do not require {\em any} damping imposed in the model. The flow itself provides stabilization to a compact set.
\end{remark}
A further Lemma, which follows from the proof of Theorem \ref{th:main2} \cite[Lemma 4.8]{delay} will be needed below. Specifically, an absorbing ball is constructed via a Lyapunov approach in Section 4.3 of \cite{delay}---this ultimately gives {\em dissipativity} of the delay dynamical system. Here, we care that this calculation is unaffected by the size of the damping parameters.
\begin{lemma}\label{indeppara}
Suppose $0\le U \ne 1$,  $F_0 \in H^{4}(\Omega)$ and $p_0 \in L_2(\Omega)$. Consider initial data
$$
(u_0, u_1;\phi_0,\phi_1) \in  H_0^2(\Omega)\times L_2(\Omega)\times H^1(\realsthree_+)\times L_2(\realsthree_+)
$$
which are
localized  in $\R_+^3$ (i.e., $\phi_0(\xb)=\phi_1(\xb)=0$ for $|\xb|>\rho_0$ for some $ \rho_0>0$). Then there is absorbing set $\mathscr B \subset H_0^2(\Omega)\times L_2(\Omega)$ which is uniform with respect to the initial data $y_0 $ (it may depend on $\rho_0$ though)  and uniform in the damping parameters $k,\beta \ge 0$. \end{lemma}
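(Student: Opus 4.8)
The plan is to re-examine the Lyapunov construction of \cite[Section 4.3]{delay}, which produces dissipativity of the delayed plate dynamics, and to verify that every constant produced there can be chosen independent of $k$ and $\beta$. First I would invoke Theorem \ref{rewrite}: for $t>t^{\#}(\rho_0,U,\Omega)$ the plate component solves the reduced equation \eqref{reducedplate}, equivalently
\[
u_{tt}+\Delta^2u+(k+1)u_t+Uu_x+\beta u-[u,v(u)+F_0]=p_0-q^u(t),
\]
so that the ultimate behavior of $(u,u_t)$ is dictated entirely by this delay system, whose memory term $q^u$ has a kernel depending only on $t^*(U,\Omega)$. Since an absorbing ball for the $H_0^2(\Omega)\times L_2(\Omega)$--projection of the delay dynamics is, after the (data-dependent) transient, absorbing for the plate component of the full flow-plate evolution, it suffices to exhibit a $k,\beta$-uniform absorbing ball for \eqref{reducedplate}.

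Next I would write the Lyapunov functional in the form used in \cite{delay}, $V(t)=E_{pl}(t)+\mathcal{M}(t)+\nu\Psi(t)$, where $E_{pl}$ is the plate energy \eqref{energies} (carrying the non-negative, $\beta$-weighted term $\tfrac{\beta}{2}\|u\|^2$), $\mathcal{M}(t)$ is the memory functional built from $q^u$ --- hence only from the history of $u$ on $[t-t^*,t]$ measured in $H^2$, with no dependence on $k,\beta$ --- and $\Psi$ is the equipartition multiplier. The single place where $k$ would enter $\tfrac{d}{dt}V$ is the cross term $\tfrac{d}{dt}\langle u_t,u\rangle_{\Omega}$, which produces $-(k+1)\langle u_t,u\rangle_{\Omega}$; I would therefore use the shifted multiplier $\Psi(t)=\langle u_t+\tfrac{k+1}{2}u,\,u\rangle_{\Omega}=\langle u_t,u\rangle_{\Omega}+\tfrac{k+1}{2}\|u\|^2$, the standard device for damped plates. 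Differentiating along \eqref{reducedplate}, the contributions $\tfrac{k+1}{2}\langle u,u_t\rangle_{\Omega}$ and $-\tfrac{k+1}{2}\langle u_t,u\rangle_{\Omega}$ cancel identically, leaving
\[
\tfrac{d}{dt}\Psi=\|u_t\|_{\Omega}^2-\|\Delta u\|_{\Omega}^2-\|\Delta v(u)\|_{\Omega}^2-\beta\|u\|^2+\langle[u,u],F_0\rangle_{\Omega}+\langle p_0,u\rangle_{\Omega}-\langle q^u,u\rangle_{\Omega},
\]
where I have used $\langle u_x,u\rangle_{\Omega}=0$ (clamped condition); the $F_0$-, $p_0$- and $q^u$-terms are lower-order and memory contributions absorbed via the sharp estimates for the Airy stress function exactly as in \cite{delay}. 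Crucially there is no $k$-dependence here, while $-\beta\|u\|^2\le0$ and the added $\tfrac{k+1}{2}\|u\|^2\ge0$ only improve the lower bound $V\ge c(E_{pl}+\mathcal{M})-C$, whose constant is $k,\beta$-free by Lemma \ref{l:epsilon}. In the energy identity for \eqref{reducedplate} the term $ku_t$ enters only through the dissipation $-k\|u_t\|_{\Omega}^2\le0$, so the coercivity of $-\tfrac{d}{dt}(E_{pl}+\mathcal{M})$ furnished by the flow-induced damping $-u_t$ --- precisely the content of \cite[Section 4.3]{delay} --- is only strengthened. Fixing $\nu$ at the same small value selected in \cite{delay} (determined there by $U,\Omega,p_0,F_0$ alone, with the shifted multiplier removing the only obstruction to that choice when $k$ is large) and combining, I obtain $\tfrac{d}{dt}V\le-\omega V+C$ with $\omega,C$ depending only on $U,\Omega,p_0,F_0$ (and at most on $\rho_0$ through the reduction time $t^{\#}$). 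Gronwall then gives $\limsup_{t\to\infty}\big(\|u(t)\|_{2,\Omega}^2+\|u_t(t)\|_{\Omega}^2\big)\le R_0^2$ with $R_0$ independent of $k,\beta$, so $\mathscr{B}=\{(w_0,w_1)\in H_0^2(\Omega)\times L_2(\Omega):\|w_0\|_{2,\Omega}^2+\|w_1\|_{\Omega}^2\le R_0^2+1\}$ is the required absorbing set.

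The only real obstacle is the one just indicated: the equipartition term, whose time derivative carries the $k$-sensitive quantity $-(k+1)\langle u_t,u\rangle_{\Omega}$. A naive Young's inequality would split this as $\lesssim(k+1)\|u_t\|_{\Omega}^2+(k+1)\|u\|^2$; the first piece is dominated by the $k$-enhanced dissipation, but the second cannot be absorbed $k$-uniformly --- not by $\beta\|u\|^2$ (which may vanish), and not by $\|\Delta u\|_{\Omega}^2+\|\Delta v(u)\|_{\Omega}^2$ via Lemma \ref{l:epsilon} without forcing its auxiliary parameter to zero and the constant $M_\epsilon$ to blow up as $k\to\infty$. The resolution, and the point that must be checked line by line against \cite[Section 4.3]{delay}, is that the shifted multiplier annihilates this term exactly, after which the remaining computation is verbatim that of \cite{delay} with only two harmless modifications --- the non-negative $\tfrac{\beta}{2}\|u\|^2$ added to the energy and the sign-definite $-k\|u_t\|_{\Omega}^2$ added to its decay rate --- so that no constant, in particular not the absorbing radius $R_0$, inherits any dependence on $k$ or $\beta$.
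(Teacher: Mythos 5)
Your overall strategy---reduce to the delayed plate equation via Theorem \ref{rewrite} and re-run the Lyapunov construction of \cite[Section 4.3]{delay} while tracking the damping parameters---is exactly what the paper intends: its ``proof'' is the remark following the lemma, which defers to \cite{delay} and to Theorem 3.10 of \cite{Memoires} and records only that $k,\beta$ affect the \emph{time} of absorption. Your computation of $\tfrac{d}{dt}\Psi$ for the shifted multiplier is also correct. But there is a genuine gap at the final step. The shifted multiplier does not eliminate the $k$-dependence; it relocates it from $\tfrac{d}{dt}\Psi$ into $\Psi$ itself, since $V$ now contains the term $\nu\tfrac{k+1}{2}\|u\|^2$. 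To close the Gronwall inequality $\tfrac{d}{dt}V\le-\omega V+C$ you must dominate $\omega\,\nu\tfrac{k+1}{2}\|u\|^2$ by the negative terms actually present on the right-hand side, namely $-\nu\|\Delta u\|^2-\nu\|\Delta v(u)\|^2-\nu\beta\|u\|^2$ (plus the dissipation in $u_t$ and the memory term, neither of which controls $\|u\|^2$). Since $\beta$ may vanish, the only route is Lemma \ref{l:epsilon}, which forces its parameter $\epsilon\sim(k+1)^{-1}$ and hence $M_\epsilon\to\infty$---precisely the obstruction you correctly diagnose for the unshifted multiplier in your last paragraph, and which recurs, unaddressed, inside $V$. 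With $\nu$ fixed independently of $k$, as you propose, the argument yields an absorbing radius that grows with $k$; this can already be checked on the linear model $u_{tt}+\Delta^2u+ku_t=p_0$.

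The repair is to scale the multiplier: take $\nu=\nu_0/(k+1)$, so that $\nu\Psi=\tfrac{\nu_0}{k+1}\langle u_t,u\rangle+\tfrac{\nu_0}{2}\|u\|^2$ and the extra term in $V$ becomes $k$-free, and absorb the non-conservative cross terms $U\langle u_x,u_t\rangle$ and $\langle q^u,u_t\rangle$ arising from the energy identity into the enlarged dissipation $(k+1)\|u_t\|^2$ via Young's inequality with weight $k+1$, so that their residuals also carry a factor $(k+1)^{-1}$; the memory coefficient $\mu$ must be scaled the same way. One then obtains $\tfrac{d}{dt}V\le-\omega_kV+C_k$ in which \emph{both} $\omega_k$ and $C_k$ are of order $(k+1)^{-1}$: the absorbing radius is controlled by the ratio $C_k/\omega_k$, which is uniform in $k$ and $\beta$, while the absorption time is of order $\omega_k^{-1}\sim k+1$. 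This is consistent with---indeed it explains---the paper's remark that the damping parameters enter only through $T(\mathscr B)$, a feature your version of the argument cannot reproduce because you assert a damping-independent decay rate $\omega$.
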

\begin{remark}
The proof of Lemma \ref{indeppara} follows from a careful analysis of the Lyapunov function constructed in \cite{delay}; indeed, the damping parameters affect the {\em time of absorption} $T(\mathscr B)$ for a given plate trajectory $(u,u_t)$. The details related to the influence of the damping parameters are the same as in the proof of Theorem 3.10 in \cite{Memoires} [note that Assumptions  3.23. 3.24 in \cite{Memoires} are satisfied). See also Theorem 4.3 in \cite{Memoires}. (The effect of $k$ and $\beta$ on the Lyapunov calculations can be seen in the proof of Theorem \ref{exp}, though the structure of the Lyapunov functions is different and arises in a different context.)
\end{remark}

In \cite{delay} the interaction between the full flow-plate dynamics generated by \eqref{flowplate}, encapsulated by the semigroup $S_t(\cdot)$ on $Y$, and the dynamics generated by the delayed plate equation \eqref{reducedplate} above are discussed in detail. After a sufficiently large time $t^{\#}(U,\rho_0,\Omega)$, solutions to \eqref{flowplate} must also satisfy \eqref{reducedplate}, and we utilize energy methods and dynamical systems techniques on the semiflow generated by the reduced plate dynamics. The dynamics given by \eqref{reducedplate} are encapsulated by a semiflow $T_t$ on the space $X= H_0^2(\Omega) \times L_2(\Omega) \times L_2(-t^*,0;H_0^2(\Omega))$, where $t^*$ is the time of delay as given in Theorem \ref{flowformula}. For $(u_0,u_1,\eta)=x_0 \in X$, the dynamics are given by $T_t(x_0)=(u(t),u_t(t),u^t)$ where $u^t=u(t+s)$ for $s \in (-t^*,0)$; the initial data for the delay component is $\eta=u\big|_{t \in (-t^*,0)}$.

We will need an additional estimate on the plate trajectories which further improves the structure of the long-time behavior estimates associated to the analysis above. In \cite[Section 4.5]{delay} we show a {\em quasistability} estimate on the delay plate trajectories which lie on the attractor. Such an estimate is the key in the analysis in \cite[Section 4.5]{delay} and directly leads to the finite dimensionality and additional smoothness of $\mathscr U$ in Theorem \ref{th:main2}. The only properties of the attracting set utilized in obtaining this estimate are its {\em compactness} and {\em invariance under the dynamics}---both of which are available for $k=\beta=0$. By taking $k$ sufficiently large we can show the quasistability estimate on any forward invariant set---rather than on the attractor. (Such estimate valid for an arbitrary invariant bounded  will lead to a construction of exponential attracting set---\cite{Miranville}). However, for the sake of the sensitivity arguments we utilize below, we need a stronger, more general estimate which provides uniform {\em exponential decay} on the difference of two solutions to the delay plate equation given in Theorem \ref{rewrite}.  This requires additionally taking $\beta$ sufficiently large.

\begin{theorem}\label{exp}
Let $y_i(t)=(u_i(t),u_{it}(t);\phi_i(t),\phi_{it}(t))$ for $i=1,2$ be two solutions to \eqref{flowplate} such that $$ \|(u_i(t),u_{it}(t))\|^2_{H_0^2(\Omega)\times L_2(\Omega)} \le R^2$$ for all $t>0$; we assume that the flow components of the trajectories have data $\phi_i(t_0)$ and $\phi_{it}(t_0)$ which vanish outside of some $K_{\rho_0}$ (so Theorem \ref{rewrite} applies). Write $\bu(t)=u_1(t)-u_2(t)$. Then for some $t_0>t^{\#}(\rho_0,U,\Omega)$ there exists $k_0 (R)  >0, \beta_0 (R)  >0 $ and a constant
  $\mathcal C(k_0,\beta_0)>0$ such that we have the (delay-type) estimate:
\begin{align}
 \|\Delta \bu(t)\|^2 + \|\bu_t(t)\|^2
\le &~e^{-\mathcal C(t-t_0) }\big[\|(\bu(t_0),\bu_t(t_0),\bu^{t_0})\|^2_X \big]
\notag\\
 \le &~ C(R)e^{-\mathcal C t}
\end{align}
\end{theorem}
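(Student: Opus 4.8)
The plan is to derive an exponential decay estimate for the difference $\bu = u_1 - u_2$ of two solutions to the reduced delay plate equation \eqref{reducedplate} by a Lyapunov-functional argument, exploiting the fact that the damping coefficients $k$ and $\beta$ are at our disposal. First I would subtract the two copies of \eqref{reducedplate} (valid for $t > t^{\#}$), obtaining an equation for $\bu$ of the form
\begin{equation*}
\bu_{tt} + \Delta^2 \bu + k\bu_t + \beta \bu + (\partial_t + U\partial_x)\bu = [u_1, v(u_1)+F_0] - [u_2,v(u_2)+F_0] - q^{\bu}(t),
\end{equation*}
where the delay term $q^{\bu}$ is linear in $\bu$ (so it simply carries the difference through the formula \eqref{potential}) and the von Karman difference is, by the locally Lipschitz property of $f_V$ on bounded sets of $H_0^2(\Omega)$ together with sharp Airy stress function estimates \cite{springer}, bounded in $L_2(\Omega)$ by $C(R)\|\Delta\bu\|$ modulo lower-order (compact) terms. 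The key structural point is that the reduction has produced the extra velocity term $-\bu_t$ on the right-hand side (absorbed into $k\bu_t$), so the effective damping is $(k+1)u_t$; this is the free dissipation observed in \cite{delay}.

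Next I would introduce the standard modified energy functional $\Lambda(t) = E_{\bu}(t) + \varepsilon \Psi(t)$, where $E_{\bu}(t) = \tfrac12(\|\bu_t\|^2 + \|\Delta\bu\|^2 + \beta\|\bu\|^2)$ is the natural energy of the difference equation and $\Psi(t) = (\bu_t, \bu) + \tfrac{k}{2}\|\bu\|^2$ (possibly with an additional term handling the $U\partial_x$ contribution) is the usual equipartition multiplier; for the delay system one must also add a term of the form $\int_{-t^*}^0 \int_{t+s}^t \|\bu_\tau(\tau)\|^2\, d\tau\, ds$ weighted appropriately so that the delayed forcing $q^{\bu}$ is controlled by the dissipation integral of $\bu_t$ over the delay window, exactly as in the quasistability analysis of \cite[Section 4.5]{delay}. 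Differentiating $\Lambda$ along trajectories, the $k$-damping gives $-\,(k+1)\|\bu_t\|^2$ from the energy identity, the multiplier $\Psi$ produces $-\|\Delta\bu\|^2 - \beta\|\bu\|^2 + c\|\bu_t\|^2 + (\text{lower order}) + (\text{delay remainder})$, and the delay-window term trades the memory contribution for a small multiple of $\|\bu_t\|^2$ plus an exponentially decaying boundary term. Choosing $\varepsilon$ small, then $k \ge k_0(R)$ large enough to dominate the $c\|\bu_t\|^2$ terms, and $\beta \ge \beta_0(R)$ large enough to absorb the lower-order terms coming from the von Karman difference and from $U\partial_x\bu$ (here Lemma \ref{l:epsilon} type control of low frequencies, applied to $\bu$, is what lets $\beta$ swallow the remaining non-sign-definite pieces), yields a differential inequality $\frac{d}{dt}\Lambda(t) \le -\omega \Lambda(t) + C e^{-\gamma t}$ for positive constants, whence $\Lambda(t) \le e^{-\mathcal C(t-t_0)}\Lambda(t_0)$ after possibly shrinking $\mathcal C$. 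Since $\Lambda$ is equivalent (up to the delay norm) to $\|\Delta\bu(t)\|^2 + \|\bu_t(t)\|^2$ and is controlled initially by $\|(\bu(t_0),\bu_t(t_0),\bu^{t_0})\|_X^2$, this gives the first inequality; the second follows immediately from the uniform bound $\|(u_i(t),u_{it}(t))\|^2_{Y_{pl}} \le R^2$, which bounds the initial delay datum by $C(R)$.

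The main obstacle I anticipate is controlling the delayed potential term $q^{\bu}(t)$: unlike an ODE delay, $q^{\bu}$ involves spatial derivatives of $\bu$ (the operator $M_\theta^2$ in \eqref{potential} is second order), so naively it is not lower-order relative to $\|\Delta\bu\|^2$. The resolution—which is exactly the subtle point carried over from \cite{delay} and the reason the result is delicate in the non-rotational case—is that $q^{\bu}$ appears inside an $L_2(\Omega)$ pairing against $\bu_t$ (or against the multiplier), and after integration by parts in $\theta$ and $s$ and use of the finite delay horizon $t^*$, the second-order derivatives can be moved onto a time integral of $\bu_t$; more precisely one shows $\|q^{\bu}\|_{L_2(t-t^*,t;L_2(\Omega))}$ (or the relevant weaker norm) is bounded by $C\int_{t-t^*}^t \|\bu_t(\tau)\|_{?}\,d\tau$, which the delay-window Lyapunov term is designed to absorb, at the cost of requiring $k$ sufficiently large. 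A secondary technical point is that the argument must be run on an arbitrary forward-invariant ball of radius $R$ rather than on the attractor (so that it applies to the two given trajectories), which is why $k_0$ and $\beta_0$ depend on $R$; but the Lyapunov construction itself is insensitive to whether one is on the attractor, as noted after Lemma \ref{indeppara}, so no new difficulty arises there. Finally, one should double check that the constant $\mathcal C$ can be taken independent of $t_0 > t^{\#}$, which follows from time-translation invariance of the reduced equation once $t_0 > t^{\#}(\rho_0,U,\Omega)$.
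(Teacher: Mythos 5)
Your proposal follows essentially the same route as the paper: subtract the two copies of the reduced delay equation \eqref{reducedplate}, form a Lyapunov functional consisting of the difference energy $E_{\bu}$, a small multiple of the equipartition multiplier $\langle \bu_t,\bu\rangle+\tfrac{k}{2}\|\bu\|^2$, and a delay-window integral, then choose the parameters and finally $k\ge k_0(R)$, $\beta\ge\beta_0(R)$ to close a differential inequality $\tfrac{d}{dt}V\le-\mathcal C V$. Two details of your execution diverge from what is actually needed, and in both cases the correct version is simpler than what you describe.

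First, the ``main obstacle'' you anticipate with $q^{\bu}$ is not an obstacle, and your proposed resolution (integration by parts in $\theta$ and $s$ to trade $M_\theta^2$ for a time integral of $\bu_t$) is unnecessary here. Since $M_\theta^2$ is a second-order constant-coefficient operator and $\bu(\tau)\in H_0^2(\Omega)$, the formula \eqref{potential} gives directly, by Minkowski and Cauchy--Schwarz over the finite delay horizon, $\|q^{\bu}(t)\|^2\le C\int_{t-t^*}^t\|\Delta\bu(\tau)\|^2\,d\tau$. This is exactly why the paper's delay-window term is $\mu\int_0^{t^*}\int_{t-s}^t\|\Delta\bu(\tau)\|^2\,d\tau\,ds$ (in the $H^2$-norm of $\bu$), not the $\|\bu_t\|^2$-window you propose: differentiating it produces $-\mu\int_{t-t^*}^t\|\Delta\bu\|^2$, which absorbs $\delta\|q^{\bu}\|^2$, at the cost of $+\mu t^*\|\Delta\bu(t)\|^2$, which is in turn dominated by the $-\nu\|\Delta\bu\|^2$ coming from the multiplier once $\mu$ is small relative to $\nu$. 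With your $\|\bu_t\|^2$-window you would still need a bound on $q^{\bu}$ by velocities alone, which does not hold without reintroducing $\|\bu(t)\|_2$.

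Second, Lemma \ref{l:epsilon} cannot be what lets $\beta$ swallow the non-sign-definite $\|\bu\|^2$ terms: that lemma carries an additive constant $M_\epsilon$, which is harmless for absorbing-set estimates of a single trajectory but fatal for proving that the \emph{difference} of two trajectories decays to zero. In the paper no such lemma is invoked; the multiplier contributes $-\beta\nu\|\bu\|^2$ outright, and $\beta_0(R)$ is chosen so that this dominates the $C(R)\|\bu\|^2$-type terms produced by the locally Lipschitz estimate on $f_V$, by $U\langle\bu_x,\bu\rangle$, and by the $q^{\bu}$ pairings. With these two corrections your argument coincides with the paper's proof; the remaining points (dependence only on $R$ via the damping-independent absorbing ball, and translation invariance in $t_0>t^{\#}$) are handled as you indicate.
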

\begin{corollary}
Considering Lemma \ref{indeppara}, and Theorem \ref{exp} above, we see that for $t_0$ sufficiently large the exponential decay can be made to depend only on the size of the absorbing ball $\mathscr B$, which in turn depends on the intrinsic parameters of the problem---$U,\Omega,\rho_0,p_0,$ and $F_0$.
\end{corollary}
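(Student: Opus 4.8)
The plan is to prove Theorem~\ref{exp} by a Lyapunov argument run directly on the difference of the two reduced (delayed) plate trajectories; Lemma~\ref{indeppara} then converts this into the Corollary. By Theorem~\ref{rewrite}, for $t>t^{\#}(\rho_0,U,\Omega)$ both $u_i$ satisfy \eqref{reducedplate}. Subtracting the two equations and using that the map $u\mapsto q^u$ in \eqref{potential} is linear (so $q^{u_1}-q^{u_2}=q^{\bu}$), the difference $\bu=u_1-u_2$ solves, with clamped boundary conditions,
\begin{equation*}
\bu_{tt}+\Delta^2\bu+(k+1)\bu_t+\beta\bu+U\partial_x\bu=\big[u_1,v(u_1)+F_0\big]-\big[u_2,v(u_2)+F_0\big]-q^{\bu}(t).
\end{equation*}
First I would record the two basic identities for this equation, which is linear in $\bu$: the $\bu_t$-multiplier yields the dissipation relation for $E_{\bu}(t)=\tfrac12\big(\|\bu_t\|^2+\|\Delta\bu\|^2+\beta\|\bu\|^2\big)$ with good term $(k+1)\|\bu_t\|^2$, and the multiplier $\Psi(t)=(\bu_t,\bu)_\Omega+\tfrac{k+1}{2}\|\bu\|^2$ yields an equipartition relation $\dot\Psi=\|\bu_t\|^2-\|\Delta\bu\|^2-\beta\|\bu\|^2+(\text{forcing},\bu)_\Omega$ in which the $(k+1)$-cross terms cancel and the gyroscopic contribution $U(\partial_x\bu,\bu)_\Omega$ vanishes since $\bu\in H_0^2(\Omega)$.

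Next I would estimate the two forcing contributions. For the von Karman difference, expand via bilinearity of the bracket and of the Airy operator ($v(u_1)-v(u_2)=v(\bu,u_1+u_2)$), so that
\begin{equation*}
\big[u_1,v(u_1)+F_0\big]-\big[u_2,v(u_2)+F_0\big]=\big[\bu,v(u_1)+F_0\big]+\big[u_2,v(\bu,u_1+u_2)\big],
\end{equation*}
each summand linear in $\bu$ with coefficients bounded by $C(R)$ in the relevant norms by the sharp regularity of the Airy stress function (cf.\ \cite[Theorem~1.5.7]{springer} and the $\Pi$-estimates). Modulo perfect-time-derivative terms, which I would fold into the Lyapunov functional, this produces only contributions of the form $\eta E_{\bu}(t)+C(R,\eta)\|\bu(t)\|_{2-\delta}^2$ with $\eta,\delta>0$ at our disposal. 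For the delay term I would use that $\widehat\bu\in H^2(\realstwo)$ (since $\bu$ and $\nabla\bu$ vanish on $\Gamma$), whence $\|q^{\bu}(t)\|^2\le C\,t^*\!\int_{t-t^*}^t\|\Delta\bu(\tau)\|^2\,d\tau$; this memory term is to be absorbed by augmenting the functional with the delay integral $E_{del}(t)=\int_{-t^*}^0\!\int_{t+\sigma}^t\|\Delta\bu(\tau)\|^2\,d\tau\,d\sigma$, whose derivative supplies $-\!\int_{t-t^*}^t\|\Delta\bu(\tau)\|^2\,d\tau$.

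I would then assemble $W(t)=E_{\bu}(t)+\epsilon\Psi(t)+\lambda E_{del}(t)$ and fix constants in the order: $\epsilon$ small (so $W\simeq E_{\bu}+\lambda E_{del}$ and the term $-\epsilon\|\Delta\bu\|^2$ from $\epsilon\dot\Psi$ is available to absorb both the nonlinear $\eta E_{\bu}$ and, after interpolating $\|\bu\|_{2-\delta}^2\le\eta'\|\Delta\bu\|^2+C_{\eta'}\|\bu\|^2$, the leading part of the low-frequency nonlinearity); then $\eta$ large and $\lambda$ of intermediate size so that the delay integral is dominated by $\lambda E_{del}$ without spoiling the $\|\Delta\bu\|^2$ budget (consistent since $t^*=t^*(U,\Omega)$ is fixed); then $k\ge k_0(R)$ large so that $(k+1)\|\bu_t\|^2$ dominates all remaining $\|\bu_t\|^2$-terms; and finally $\beta\ge\beta_0(R)$ large (which may be taken to depend on $k_0$ via the residual cross terms) so that $\epsilon\beta\|\bu\|^2$ dominates $C_{\eta'}C(R)\|\bu\|^2$. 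This gives $\dot W(t)\le-\mathcal C\,W(t)$ for $t\ge t_0$ (any $t_0>t^{\#}$), so Gronwall plus the equivalence $c'E_{\bu}(t)\le W(t)$ yields $\|\Delta\bu(t)\|^2+\|\bu_t(t)\|^2\le Ce^{-\mathcal C(t-t_0)}\|(\bu(t_0),\bu_t(t_0),\bu^{t_0})\|_X^2$; since $\|(u_i(t),u_{it}(t))\|\le R$ for all $t$, the data term at $t_0$ is $\le C(R)$ (absorbing the fixed factor $e^{\mathcal C t_0}$), giving the second bound. For the Corollary: Lemma~\ref{indeppara} provides an absorbing ball $\mathscr B\subset H_0^2(\Omega)\times L_2(\Omega)$ whose radius depends only on $U,\Omega,\rho_0,p_0,F_0$ and not on $k,\beta$ or the data; choosing $t_0$ past $t^{\#}$ and past the (data-dependent) absorption times of both trajectories, Theorem~\ref{exp} applies with $R$ the radius of $\mathscr B$, so that $k_0,\beta_0$ and the rate $\mathcal C$ depend only on the intrinsic parameters.

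The main obstacle is the delay forcing $q^{\bu}$. In contrast to the rotational or thermoelastic settings—where the flow contribution to the plate carries extra regularity and may be treated perturbatively against the instantaneous energy—here $q^{\bu}$ reproduces only the full $H_0^2$-history of $\bu$, which forces the history-augmented Lyapunov functional and creates a genuine tension in the constant-chasing: the delay integral arising from $(q^{\bu},\bu_t)$ and $(q^{\bu},\bu)$ must simultaneously be dominated by $\lambda E_{del}$ and be prevented from overwhelming $-\epsilon\|\Delta\bu\|^2$, and resolving this is precisely what makes it necessary to take $\beta$—not merely $k$—large. A secondary technical point is the careful extraction of perfect-time-derivative terms from the von Karman difference, so that the remainder is genuinely lower order and hence controllable by the large static damping; this relies on the sharp Airy estimates and on the clamped boundary condition (used both to zero-extend $\bu$ into $H^2(\realstwo)$ and to kill the gyroscopic term in the equipartition relation).
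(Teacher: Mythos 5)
Your proposal is correct and follows essentially the same route as the paper: the same Lyapunov functional (difference energy plus a cross multiplier plus the delay-history integral of $\|\Delta\bu\|^2$), the same use of the sharp Airy estimates and the bound $\|q^{\bu}(t)\|^2\le C\int_{t-t^*}^t\|\Delta\bu(\tau)\|^2\,d\tau$, with $k$ and $\beta$ taken large last; and the corollary itself is obtained exactly as in the paper, by invoking the damping-independence of the absorbing ball from Lemma~\ref{indeppara} and taking $t_0$ beyond both $t^{\#}$ and the absorption time so that $R$ may be replaced by the radius of $\mathscr B$.
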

\begin{remark}
Since $k_0, \beta_0 $ depend on $R$,  it is essential that $R$ can be made independent on the large values of damping $k, \beta$. This is possible due to the fact that the absorbing ball for the delayed plate equation can be made independent on damping parameters $ k >0, \beta >0$. For arguments involving $t\to +\infty$ we may consider a time $t_0$ sufficiently large (depending on the initial plate and flow data) such that the trajectory $(u,u_t)$ enters the absorbing set $\mathscr B$. At which point we may utilize the result of Theorem \ref{exp}. This is ultimately how we will see that our asymptotic-in-time result---which depends on scaling the minimal damping parameters $k_0$ and $\beta_0$ sufficiently large---depends only on the size of the absorbing ball for the plate dynamics.
\end{remark}

  This is an estimate for the exponential continuity property on differences (resulting directly from the presence of the damping)---we are careful to point out that this property is derived from the delay plate dynamics, and hence the result depends critically on the compact support of the flow data by introducing the delay component, but not on the particular initial data, nor size of the initial data, in $Y$.

The proof of this property  utilizes a  Lyapunov technique adapted from the proof of dissipativity of the delay dynamical systems in \cite{springer,delay}. We give this argument below.
\begin{remark}
This is the key point in the treatment where we must critically utilize the sufficiently large damping coefficient $k_0$ and $\beta_0$.
\end{remark}
\begin{proof}[Proof of Theorem \ref{exp}]
 We make use of the following notation:
\begin{align}
E_{\bu}(t) =& ~\dfrac{1}{2}\left\{\|\Delta \bu(t)\|^2+\|\bu_t(t)\|^2+\beta\|\bu(t)\|^2\right\};\\
\cF =& ~f_V(u_1)-f_V(u_2);~~~
q^{\bu} = q^{u_1}-q^{u_2}.
\end{align}
Our Lyapunov function will depend on the difference of two solutions (i.e. the variable $\bu$) and we consider time sufficiently large so we may refer to the delay plate equation \eqref{reducedplate}:
\begin{equation}
V(\bu(t)) \equiv E_{\bu}(t) +\nu\Big\{\langle \bu_t,\bu\rangle +\frac{k}{2}\|\bu\|^2\Big\}+\mu\int_0^{t^*}\int_{t-s}^t\|\Delta \bu(\tau)\|^2 d\tau ds
\end{equation}

\noindent  We now proceed to show that $$\dfrac{d}{dt}V(\bu(t)) \le - C(\mu,\nu,\beta_0,k_0)V(\bu(t)).$$
Returning to the delay equation \eqref{reducedplate} we have:
\begin{align}
\dfrac{d}{dt}V(\bu(t)) = &~\dfrac{d}{dt}E_{\bu}(t) +\nu\Big\{\langle \bu_{tt},\bu\rangle +\|\bu_t\|^2+k\langle \bu_t,\bu\rangle \Big\}\\&+\mu t^*\|\Delta \bu(t)\|^2-\mu\int_{t-t^*}^t\|\Delta \bu\|^2 \nonumber  \end{align}
Using:
\begin{align*}
\langle \bu_{tt}+\beta \bu, \bu_t \rangle =&~-\langle \Delta \bu ,\Delta \bu_t \rangle -(k+1)\|\bu_t\|^2+\langle \cF , \bu_t \rangle-U\langle \bu_x, \bu_t \rangle - \langle q^{\bu},\bu_t \rangle \\
\langle \bu_{tt}+k\bu_t,\bu\rangle =& -\|\Delta \bu \|^2 + \langle \cF,\bu \rangle-\beta \|\bu\|^2-\langle \bu_t , \bu \rangle -U\langle \bu_x, \bu \rangle-\langle q^{\bu}, \bu \rangle,
\end{align*}
We have \begin{align*}
\dfrac{d}{dt}E_{\bu}(t) = & \langle \bu_{tt},\bu_t \rangle+\langle \Delta \bu,\Delta \bu_t \rangle+\beta \langle \bu,\bu_t \rangle\\
 = &~-(k+1)\|\bu_t\|^2+\langle \cF , \bu_t \rangle-U\langle \bu_x, \bu_t \rangle - \langle q^{\bu},\bu_t \rangle
\end{align*}
Hence \begin{align}
\dfrac{d}{dt}V(\bu(t)) =&~ -(k+1)\|\bu_t\|^2-\nu\|\Delta \bu\|^2-\beta\nu \|\bu\|^2 -\mu\int_{t-t^*}^t\|\Delta \bu\|^2\\ \nonumber
&+ \langle \cF,\bu_t \rangle + \nu \langle \cF,\bu\rangle-U\langle \bu_x, \bu_t\rangle-U\nu \langle \bu_x, \bu \rangle  \nonumber \\\nonumber
&+ \nu \|\bu_t\|^2 +\mu t^* \|\Delta \bu\|^2-\langle q^{\bu}, \bu_t \rangle - \nu \langle q^{\bu}, \bu \rangle-\nu\langle \bu_t, \bu \rangle
\end{align}
We must control the nonnegative terms above; we utilize Young's inequality, compactness of the Sobolev embeddings, and the locally Lipschitz character $f_V$ \cite[p.44]{springer} $$\|\cF\|_{-\delta} \le C(R)\|\bu\|_{2-\delta},~\delta \in [0,1/2).$$ This yields:
\begin{align*}
\langle\cF, \bu_t \rangle \le&~ \epsilon \|\Delta \bu\|^2+\dfrac{C(R)}{\epsilon}\|\bu_t\|^2;\hskip.5cm
\nu \langle \cF,\bu\rangle \le  ~\epsilon \|\Delta \bu\|^2+\dfrac{\nu^2 C(R)}{\epsilon}\|\bu\|^2\\
U\langle \bu_x, \bu_t\rangle \le &~\epsilon \|\Delta \bu\|^2+\dfrac{C}{\epsilon}\|\bu_t\|^2;\hskip.5cm
U\nu \langle \bu_x, \bu \rangle \le ~\epsilon \|\Delta \bu\|^2+\dfrac{C \nu^2}{\epsilon}\|\bu\|^2\\
\nu\langle \bu_t, \bu \rangle \le &~ \epsilon \|\bu\|^2+\dfrac{C\nu^2}{\epsilon}\|\bu_t\|^2;\hskip.5cm
\langle q^{\bu}, \bu_t \rangle \le ~\delta \|q^{\bu}\|^2+\dfrac{C}{\delta}\|\bu_t\|^2\\
\nu \langle q^{\bu}, \bu \rangle \le &~\delta \|q^{\bu}\|^2+\dfrac{C\nu^2}{\delta}\|\bu\|^2
\end{align*}
for all $\epsilon,\delta>0$; the constants $C$ above are ubiquitous and may vary from line to line (we do not combine them).

We have the direct estimate:
$$\|q^{\bu}(t)\|^2 \le C \int_{t-t^*}^t\|\Delta \bu(\tau)\|^2 d\tau.$$
Utilizing this and combining:
\begin{align}
\dfrac{d}{dt}V(\bu(t)) \le &~\left\{\dfrac{C(R)}{\epsilon}+\dfrac{C}{\epsilon}+\dfrac{C\nu^2}{\epsilon}+\dfrac{C}{\delta}+\nu-(k+1)\right\}\|\bu_t\|^2\label{e1}\\
&+\left\{\dfrac{\nu^2 C(R)}{\epsilon}+\dfrac{C \nu^2}{\delta}+\epsilon -\beta\nu\right\} \|\bu\|^2\label{e2}\\
&+\left\{4\epsilon+\mu t^*-\nu\right\}\|\Delta \bu\|^2\label{e3}\\
&+ \left\{\delta C-\mu\right\}\int_{t-t^*}^t\|\Delta \bu\|^2\label{e4}
\end{align}
We  first choose
 $\nu$ (small) so that
 \begin{equation}\label{VE} E_{\bu}(t) \le C\big(\nu \big)V(\bu(t)).\end{equation}

We then select  $\epsilon(\nu)$ and $~\mu(\nu)$; then we choose $~ \delta(\mu)$; and finally choose $~ \beta(\nu,\epsilon,\delta)$. These selections will guarantee that the terms in \eqref{e2}--\eqref{e4} (each depending on $\nu$) are negative. Thus, the underlying parameters $t^*,R$ and $\Omega$ provide the minimal damping coefficient $\beta_0 \le \beta$.
 Lastly, we choose $k\ge k_0$ sufficiently large so as to guarantee
\begin{align}
\dfrac{d}{dt}V(\bu(t)) \le -C(k)\Big\{\|\bu_t\|^2+\|\Delta \bu\|^2+\beta\|\bu\|^2+\int_{t-t^*}^t\|\Delta \bu\|^2\Big\}.
\end{align}
But since $$t^* \int_{t-t^*}^t\|\Delta \bu\|^2 d\tau \ge  \int_0^{t^*}\int_{t-s}^t\|\Delta \bu(\tau)\|^2 d\tau ds,$$
we have
$$\dfrac{d}{dt}V(\bu(t)) \le -\mathcal C(k,\beta)V(\bu(t)).$$
This gives exponential decay of $V(\bu(t))$.
 This implies the exponential decay of $E_{\bu}(t)$:
\begin{align} E_{\bu}(t) \le&~ e^{-\mathcal C (t-t_0)}V(\bu(t_0))\notag\\ \le&~ e^{-\mathcal C (t-t_0)}[E_{\bu}(t_0)+\int_{t_0-t^*}^{t_0}\|\Delta \bu \|^2] \notag\\ \le &~ C(R)e^{-\mathcal C t}.\end{align}
\end{proof}

In view of the estimate asserted by Theorem \ref{exp}, it is evident that the main issue to contend with is showing strong convergence of flow solutions.
While dispersive effects resolve the issue of the convergence for the first component of the flow $\phi^*$, the lack of  boundedness (and of course of compactness)  for the Neumann map presents the main predicament.
The analysis of this issue comprises most of the technical  portion  of the paper.

\section{Outline of the remainder of the paper}
The proof of the main result Theorem \ref{conequil1} will be broken up into steps, based on the preliminary Theorems \ref{regresult} and \ref{strongcont}. These theorems will be themselves proved by a sequence of smaller lemmas, described below.
\begin{enumerate}
\item ({\em Proof of Theorem \ref{regresult}}---Section \ref{5}) First, we will show that, independent of any results on the strong convergence of the flow, the plate velocities go strongly to 0 in $L_2(\Omega)$; this is given in Theorem \ref{convergenceprops}. We note, at this point, that the existence of the compact attracting set above in Theorem \ref{th:main2} gives that there exists some limit point for $u(t_n)$ in $H_0^2(\Omega)$ for any sequence $t_n \to \infty$. WLOG we restrict to a sequence $t_n$ and hence the boundedness of the semiflow $S_t$ on $\widetilde Y$ yields a weak limit point, i.e., we have that $S_{t_n}(y_0)=(u(t_n),u_t(t_n);\phi(t_n),\phi_t(t_n)) \rightharpoonup (\hat u, 0, \hat \phi, \hat \psi)$ in $Y_{Y_{\rho}}$. This motivates Theorem \ref{weakth}.

 We then operate on regular trajectories from $\cD(\bT)$. On the regular trajectories we will show strong stability (in the sense of Theorem \ref{conequil1}) of the problem for regular initial data without ``largeness" assumptions on the damping coefficent. The key point in this step is demonstrating global-in-time boundedness of plate trajectories in higher norms (Theorem \ref{highernorms}). This requires us to operate on full-flow plate trajectories {\em first} in order to obtain propagation of regularity on the finite horizon; at a given $t^{\#}$ the reduction result (Theorem \ref{rewrite}) is employed in order to operate on delayed plate trajectories. At this point, the requisite ultimate boundedness is obtained. Using the techniques from \cite{ryz,ryz2} (Lemma \ref{compactnesscriterion} and Lemma \ref{compact2}) and using Theorem \ref{flowformula} we will identify the a weak limit point in Lemma \ref{limitpoint}. Finally, we will identify this limit point as being a stationary point ($\phi_t(t_n) \to 0$) in Lemma \ref{characterize}.

At this point we utilize a limiting procedure, along with our results up to this point, to show that our subsequential limit from the previous steps satisfies the stationary flow-plate problem \eqref{static}. This will be given as Lemma \ref{staticsols}.
\item ({\em Proof of Theorem \ref{strongcont}}---Section \ref{6}) We show a uniform-in-time continuity property (under a large damping assumption) of the dynamics which enable  an approximation argument (as in \cite{koch}) to pass convergence obtained for regular initial data onto energy type solutions. Specifically we consider a sequence $y^0_m \to y^0$ in the state space $Y$, with $y_m \in \cD(\bT)$ (the domain of the semigroup generator). We show that the semigroup $S_t(\cdot)$ uniformly-in-time Hadamard continuous. We then consider the difference of $S_t(y_M)$ and $S_t(y^0)$ in $Y_{\rho}$ (for some $M$ sufficiently large) in order to pass the limiting behavior {\em in time} of $S_t(y^0_M)$ onto $S_t(y^0)$.
\end{enumerate}
\section{Proof of  Theorem \ref{regresult}}\label{5}

\subsection{Plate Convergence}
We begin by noting that the existence of the compact attracting set for the plate dynamics in Theorem \ref{th:main2}  we infer that for any initial data $y^0=(u^0,u^1,\phi^0,\phi^1)\in Y$ and any sequence of times $t_k \to \infty$ there exists a subsequence of times $t_{n_k} \to \infty$ and a point $(\hat u, \hat w) \in Y_{pl}=H_0^2(\Omega)\times L_2(\Omega)$ such that $(u(t_{n_k}),u_t(t_{n_k)}) \to (\hat u, \hat w)$ strongly in $Y_{pl}$. (Here and below we have denoted $(u(t),u_t(t);\phi(t),\phi_t(t))=S_t(y_0)$.) Additionally, by the global-in-time bound given in Lemma \ref{globalbound}, we know that the set $\{S_{t}(y_0)\}$ is bounded in $\widetilde Y$, and hence there for any sequence $t_{n} \to \infty$ there exists a subsequence $t_{n_k}$, and a point $(\hat u, \hat w; \hat \phi, \hat \psi) \in \widetilde Y$ such that $S_{t_{n_k}}(y_0) \rightharpoonup  (\hat u, \hat w; \hat \phi, \hat \psi)$ in $Y_{\rho}$ for any $\rho$. Utilizing these results in conjunction, for any sequence of times $t_{n} \to \infty$ there is a subsequence---which we identify simply as $t_n$---such that both of the above convergences hold.

We now collect these various convergences for the plate dynamics, and use them to show that place velocities decay to zero.
\begin{theorem}\label{convergenceprops}
For any initial data $y^0=(u^0,u^1,\phi^0,\phi^1)\in Y$ and any sequence of times $t_n \to \infty$ there is a subsequence $t_{n_k}$ and a point $\hat y = (\hat u,\hat w;\hat \phi, \hat \psi)$ so that:
 \begin{enumerate}
 \item $S_{t_{n_k}}(y^0) \rightharpoonup  \hat y$. \vskip.2cm
\item
$\| u(t_{n_k}) - \hat{u}\|_{2,\Omega} \rightarrow 0$. \vskip.2cm
\item $\|u_t(t)\|^2_{0} \to 0, ~t \to \infty$, and hence $\hat w =0$. \vskip.2cm
\item Along the sequence of times $t_{n_k} \to \infty$ we have \begin{align}
\sup_{\tau \in [-c,c]}\|u(t_n+\tau)-\hat u\|_{2-\delta,\Omega} \to 0 ~\text{ for any fixed}~ c > 0, ~\delta\in (0,2).
\end{align}
\end{enumerate}
\end{theorem}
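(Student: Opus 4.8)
I would obtain items (1) and (2) directly from the discussion preceding the statement: the global bound of Lemma~\ref{globalbound} keeps $\{S_t(y^0)\}_{t\ge 0}$ bounded in $Y$, hence weakly sequentially precompact in the weaker local topology $Y_\rho$, while the compact attracting set $\mathscr U$ of Theorem~\ref{th:main2} forces, along a further subsequence, strong convergence of the plate component in $Y_{pl}=H_0^2(\Omega)\times L_2(\Omega)$; passing to a common subsequence $t_{n_k}$ yields $S_{t_{n_k}}(y^0)\rightharpoonup \hat y=(\hat u,\hat w;\hat\phi,\hat\psi)$ in $Y_\rho$ together with $\|u(t_{n_k})-\hat u\|_{2,\Omega}+\|u_t(t_{n_k})-\hat w\|_{0,\Omega}\to 0$.

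The substance is item (3), $\|u_t(t)\|_0\to 0$ as $t\to\infty$, from which $\hat w=0$ then follows. I would prove it by a Barbalat-type argument with two ingredients. First, $\int_0^\infty\|u_t(t)\|_0^2\,dt<\infty$ by Corollary~\ref{dissint} (here $k>0$ is used). Second, $t\mapsto u_t(t)$ is uniformly continuous into $L_2(\Omega)$; for this I combine (i) \emph{weak-norm equicontinuity} and (ii) \emph{$L_2$-precompactness of the velocity orbit}. For (i): since the flow data is localized, Theorem~\ref{rewrite} replaces \eqref{flowplate} by the delayed plate equation \eqref{reducedplate} for $t\ge t^\#$, so that $u_{tt}=-\Delta^2u-ku_t-\beta u+[u,v(u)+F_0]+p_0-u_t-Uu_x-q^u$, and every term on the right is uniformly bounded in $H^{-2}(\Omega)$ on $[t^\#,\infty)$ — by Lemma~\ref{globalbound}, the sharp regularity of Airy's stress function controlling $\|[u,v(u)+F_0]\|_{-2}$, and the bound $\|q^u(t)\|_0\le t^*\sup_{[t-t^*,t]}\|u\|_{2,\Omega}$ read off from \eqref{potential}; integrating, $\|u_t(t)-u_t(s)\|_{H^{-2}(\Omega)}\le C|t-s|$ for $t,s\ge t^\#$. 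For (ii): since $(u(t),u_t(t))\to\mathscr U$ in $Y_{pl}$ with $\mathscr U$ compact, the tail $\{u_t(t):t\ge T\}$ is totally bounded in $L_2(\Omega)$, and with $u_t\in C([0,\infty);L_2(\Omega))$ the whole orbit $\{u_t(t):t\ge0\}$ is then precompact in $L_2(\Omega)$. If $u_t$ were not uniformly continuous into $L_2$, I would take $s_n,t_n$ with $|s_n-t_n|\to0$ but $\|u_t(s_n)-u_t(t_n)\|_0\ge\epsilon$, pass by precompactness to $u_t(s_n)\to\alpha$ and $u_t(t_n)\to\beta$ in $L_2$ with $\|\alpha-\beta\|_0\ge\epsilon$, and contradict this by (i), which forces $\alpha=\beta$. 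Hence $g(t):=\|u_t(t)\|_0^2$ is uniformly continuous and integrable on $[0,\infty)$, so $g(t)\to0$; in particular $\hat w=\lim_k u_t(t_{n_k})=0$.

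For item (4) I would combine (2), (3) and interpolation. For $\tau\in[-c,c]$ write $u(t_n+\tau)-\hat u=\big(u(t_n+\tau)-u(t_n)\big)+\big(u(t_n)-\hat u\big)$; the second term goes to $0$ in $H^2(\Omega)$, hence in $H^{2-\delta}(\Omega)$, by (2). For the first term, $\|u(t_n+\tau)-u(t_n)\|_{0,\Omega}=\big\|\int_{t_n}^{t_n+\tau}u_t\,d\sigma\big\|_0\le c\,\sup_{\sigma\ge t_n-c}\|u_t(\sigma)\|_0\to0$ uniformly in $\tau$ by (3), while $\|u(t_n+\tau)-u(t_n)\|_{2,\Omega}\le 2\sup_t\|u(t)\|_{2,\Omega}<\infty$ by Lemma~\ref{globalbound}; the interpolation inequality $\|w\|_{2-\delta,\Omega}\le C\|w\|_{0,\Omega}^{\delta/2}\|w\|_{2,\Omega}^{1-\delta/2}$ then gives $\sup_{\tau\in[-c,c]}\|u(t_n+\tau)-u(t_n)\|_{2-\delta,\Omega}\to0$, completing (4).

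The step I expect to be the main obstacle is item (3): upgrading the finite dissipation integral to pointwise decay of $\|u_t(t)\|_0$ \emph{for finite-energy data}, where $u_{tt}$ is not an $L_2(\Omega)$-function and one cannot simply differentiate $\|u_t\|_0^2$. The two devices that make it go through are the delay reformulation \eqref{reducedplate} — which trades the ill-behaved aerodynamic pressure $(\partial_t+U\partial_x)tr[\phi]$ for the tame terms $-u_t-Uu_x-q^u$ and so gives equicontinuity of $u_t$ in $H^{-2}(\Omega)$ — and the compactness of the plate attractor from Theorem~\ref{th:main2}, which is precisely what bridges $H^{-2}$-equicontinuity to genuine $L_2$-uniform continuity.
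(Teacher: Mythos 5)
Your proposal is correct and follows essentially the same route as the paper: both arguments rest on the finite dissipation integral (Corollary \ref{dissint}), on bounding $u_{tt}$ weakly through the delay reduction \eqref{reducedplate} (the paper's Lemma \ref{boundder} is precisely your $H^{-2}$-equicontinuity of $u_t$), and on the compactness supplied by Theorem \ref{th:main2}, with items (1), (2), and (4) handled identically. The only difference is cosmetic: the paper applies Barbalat to the scalar observables $\langle u_t(t),\eta\rangle$ to obtain $u_t\rightharpoonup 0$ and then upgrades to strong decay via the attractor's compactness, whereas you use that compactness first to make $\|u_t(t)\|_0^2$ uniformly continuous and then apply Barbalat to it directly.
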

\begin{proof}
Owing to the boundedness of the dissipation integral, we now consider the quantity $\lb u_{tt}(t),\eta \rb_{\Omega}$ for $\eta \in C_0^{\infty}(\Omega)$.
\begin{lemma}\label{boundder}
Taking $u$ to be the plate component of a generalized solution to \eqref{flowplate}, the quantity $\partial_t\langle u_t(t),\eta \rangle_{\Omega}$ is uniformly bounded for $t \in [0,\infty)$ for each $\eta \in C_0^{\infty}(\Omega)$.
\end{lemma}
\begin{proof}
We note that \begin{align*}
|\lb u_{tt},\eta \rb|= &~ \big|-k\lb u_t,\eta \rb-\lb\Delta  u, \Delta \eta \rb+\lb f(u),\nabla \eta \rb+\lb p_0,\eta \rb +\lb tr[\phi_t+U\phi_x],\eta \rb \big|\\
 \le &~ \|\eta \|_{2,\Omega}\big(\mathbf E(t)+C\big)+|\lb tr[\phi_t+U\phi_x],\eta \rb_{\Omega}|
\end{align*}
We now enforce our hypotheses that $0 \le U <1$ and that there exists a $\rho_0$ such that for $|\xb|>\rho_0$~~ $\phi_0(\xb)=\phi_1(\xb)=0$. Utilizing the reduction result Theorem \ref{rewrite}, we note that for sufficiently large times $t>t^\#(\rho_0,\Omega,U)$ we may utilize the formula for the trace of the flow, namely:
$$tr[\phi_t+U\phi_x]=-(u_t+Uu_x)-q^u(t), ~~\xb \in \Omega.$$ It is then easy to see that
$$|\lb tr[\phi_t+U\phi_x],\eta\rb_{\Omega}| \le \|\eta\|_{0,\Omega}\cdot \sup_{\tau \in [0,\infty)}\left\{ \|\Delta u\|^2+\|u_t\|^2\right\} \le C\|\eta\|,$$ where in the final step we have used the boundedness in Theorem \ref{globalbound}. Hence the quantity is bounded {\em uniformly} in $t$ . \end{proof}
\noindent Now, from Corollary \ref{dissint} and Lemma \ref{boundder} above we can conclude that $$k\int_0^{\infty}|\lb u_t(\tau),\eta\rb|^2 d\tau < \infty$$ and also that $k\dfrac{d}{dt}\lb u_t(\tau),\eta\rb_{\Omega}$ is uniformly bounded in time for $t \in [0,\infty)$. Then, via the Barbalat Lemma, we can conclude that \begin{equation}\label{needed1} \lim_{t \to + \infty} k\lb u_t,\eta \rb  = 0 \end{equation} for each $\eta \in C_0^{\infty}(\Omega)$.

%We now claim that $$\int_{\Omega}|u_t(t)|^2 d\Omega \to 0,~~t \to +\infty.$$ Suppose to the contrary. In this case, there exists a constant $c>0$ and sequence of times $t_k \to +\infty$ as $k \to \infty$ such that $$\displaystyle \int_{\Omega}|u_t(t_k)|^2d\Omega> c~\text{ for all } t_k.$$
%However, by the uniform boundedness of  $\left\{\|u_t(t)\|_{0,\Omega}, ~t\ge 0\right\}$ following from Theorem \ref{globalbound}, there exists a further subsequence (maintaining the same label) $t_{k}$ such that $u_t(t_k)$ converges weakly in $L_2(\Omega)$ to a point say $w_1$. Hence, we have that $u_t(t_k) \rightharpoonup w_1$. However, by \eqref{needed1} above, we identify this weak limit  $w_1$ with $0$. Now, by the existence of a compact attracting set for the dynamics \eqref{flowplate}, we know there exists a $w_2 \in L_2(\Omega)$ such that the sequence $t_{k}$ contains yet another subsequence $t_{k_{l}}$ such that $$\|u_t(t_{k_{l}})-w_2\|_{0,\Omega} \to 0.$$ By uniqueness of limits, we must identify $w_2$ with 0, and this contradicts our assumption that $\int_{\Omega} |u_t(t_k)|^2d\Omega~>c ~\text{ for all } t_k$.

The weak convergence of $u_t$ to 0 in \eqref{needed1} and the strong convergence on subsequences $u_t(t_n) \rightarrow \hat w$   imply that  $\hat w =0$. Since every sequence has a convergent subsequence---which must converge strongly to zero---this proves
 that $$\int_{\Omega}|u_t(t)|^2 d\Omega \to 0, ~t \to \infty.$$ as desired for the third item in Theorem \ref{convergenceprops}

 Now, as described above, from the existence of the attracting set for  the plate component we conclude strong convergence   of
\begin{equation}\label{one} \| u(t_n) -  \hat{u}  \|_{2,\Omega}  \rightarrow 0\end{equation} when $t_n \rightarrow \infty $.

As for the lower order term, the following bound is clear
\begin{equation}\label{two}
\|u(t_n+\epsilon)-u(t_n)\|_{0,\Omega} \le \int_{t_n}^{t_n+\epsilon} \|u_t(\tau)\|_{0,\Omega}d\tau \le \epsilon\cdot \sup_{\tau \in [t_n,t_n+\epsilon]}\|u_t(\tau)\|_{0,\Omega}
\end{equation}
Thus \begin{align}
\sup_{\tau \in [-c,c]}\|u(t_n+\tau)-\hat u\|_{0,\Omega}\le& ~\sup_{\tau \in[-c,c]} \|u(t_n+\tau)-u(t_n)\|_{0,\Omega}+\|u(t_n)-\hat u\|_{0,\Omega}\\ \nonumber  & \to 0 ~\text{ for any fixed}~ c > 0
\end{align} by \eqref{one} and \eqref{two} above. More is true: by the interpolation inequality
$$\|u(t)\|_{2-\delta,\Omega} \le \|u(t)\|^{\delta/2}_{0,\Omega}\|u(t)\|^{1-\delta/2}_{2,\Omega},$$
and the boundedness of $\left\{\|u(t)\|_{2,\Omega}~:~t \ge 0 \right\}$ we see that \begin{align}
\sup_{\tau \in [-c,c]}\|u(t_n+\tau)-\hat u\|_{2-\delta,\Omega} \to 0 ~\text{ for any fixed}~ c > 0, \delta\in (0,2).
\end{align}
\subsection{Strong stability for regular data---proper proof of Theorem \ref{regresult}}
We consider an initial datum $y_m \in \cD(\bT)$. We want to show that the trajectory $S_t$ is strongly stable in some sense. \begin{theorem}
Consider $y_m \in \cD(\bT)\cap B_R(Y) \subset (H^4\cap H_0^2)(\Omega)\times H_0^2(\Omega) \times H^2(\realsthree_+) \times H^1(\realsthree_+) \subset Y$. Suppose that the initial flow data are supported on a ball of radius $\rho_0$ (as in Theorem \ref{rewrite}). Suppose $k>0$. Then for any sequence of times $t_n \to \infty$ there is a subsequence of times $t_{n_k}$ identified by $t_k$ and a point $\hat y =(\hat u, 0; \hat \phi, 0) \in \widetilde Y$ so that $$\|S_{t_k}(y_m) - \hat y\|_{Y_{\rho}} \to 0,~~t_k \to \infty.$$ \end{theorem}
\begin{remark}\label{regsize} We pause to point out that this result does not depend on the size of the damping coefficients $k,\beta$. \end{remark}
\begin{proof} Owing to Remark \ref{regsize} for this proof we consider $\beta=0$ and $k>0$ fixed.
The proof follows through four steps.
\vskip.3cm
\noindent {\em STEP 1: Uniform-in-Time Bounds  of Higher Energy }
\vskip.1cm
\noindent In order to make use of the flow compactness criterion in Lemma \ref{compactnesscriterion} we must first utilize Lemma \ref{compact2}. This requires showing an interpolation type bound on plate solutions in higher norms. In the accounts \cite{ryz,ryz2} this was accomplished using parabolic effects; here we show that such a bound can be obtained on regular data by utilizing the structure of the von Karman bracket. We will ultimately prove the following bound:
\begin{theorem}\label{highernorms}
Consider initial data $y_m \in \cD(\bT)$ such that $y_m \in B_R(Y)$ and take $k>0$. Then we have that for the trajectory $S_t(y_m) = (u_m(t),u_{mt}(t);\phi_m(t),\phi_{mt}(t))$ $$(u_m(\cdot),\phi_m(\cdot)) \in  C^1\left(0,T;H_0^2(\Omega)\times H^1(\realsthree_+)\right),$$ for any $T$,
along with the bound
\begin{equation}\label{keybound0}
\sup_{t\in[0,T)}\left\{\|\Delta u_{mt}\|^2+\|u_{mtt}\|^2 \right\} < \infty.
\end{equation}
Additionally, if we assume the flow initial data are localized and consider the delayed plate trajectory (via the reduction result Theorem \ref{rewrite}), and we take $k>0$, we will have:
\begin{equation}\label{keybound1}
\sup_{t\in[0,\infty)}\left\{\|\Delta u_{mt}\|^2+\|u_{mtt}\|^2 \right\} < \infty.
\end{equation}
Then, by the boundedness in time of each of the terms in the \eqref{reducedplate}, we have
$$\|\Delta^2u_m\|_0 \le C(R,p_0,F_0),$$ where we critically used the previous bound in \eqref{keybound1}. In particular, this implies that, taking into account the clamped boundary conditions, \begin{equation}\label{keybound2}
\sup_{t \in [0,\infty)}\|u_m(\tau)\|_{4,\Omega} \le C(R,p_0,F_0)
\end{equation}

\end{theorem}

The proof of this Theorem \ref{highernorms} is based on two sub-steps labeled STEP 1.1 and STEP 1.2. We first prove finite time propagation of regularity (STEP 1.1). Following this, we shall propagate regularity
for all times uniformly (STEP 1.2) without making assumptions on the damping coefficients. This is motivated by the fact that for the infinite time propagation we must use the delay representation for plate solutions, which is valid only for sufficiently large times. Thus, the regular initial condition required to do so is obtained via the finite time propagation of regularity for the full flow-plate model.

\begin{remark}
It is noteworthy that while finite time propagation of the regularity is valid for the full flow-plate trajectory, the infinite horizon propagation is valid only for the plate.
\end{remark}

\begin{proof}[Proof of Theorem \ref{highernorms}]
 \noindent {\em STEP 1.1}
 \vskip.1cm
To prove \eqref{keybound1} above (which then implies \eqref{keybound2}) we will consider the time differentiated version of the entire flow-plate dynamics in \eqref{flowplate}, which is permissible in $\cD(\bT)$. We label $w=u_{mt}$ and $\Phi=\phi_{mt}$, multiply the plate equation by $w_t$ and the flow equation by $\Phi_t$, and integrate in time. We consider the `strong' energies
\begin{align}
E^s_{pl}(t)=& ~\dfrac{1}{2} [\|\Delta w(t)\|^2+\|w_t(t)\|^2],\notag\\
E^s_{fl}(t)=& \dfrac{1}{2}[\|\nabla \Phi(t)\|^2_{\realsthree_+}-U^2\|\partial_x \Phi(t)\|_{\realsthree_+}^2+\|\Phi_t(t)\|^2_{\realsthree_+}],\notag\\
E^s_{int}(t)=&~U\lb w_x(t),\Phi(t)\rb_{\Omega}\notag\\
\cE^s(t) = &~E^s_{pl}(t)+E^s_{fl}(t)+E^s_{int}(t),~~
\bE^s(t) =  E^s_{pl}(t)+E^s_{fl}(t)
\end{align}
From the standard analysis (done at the energy level) we have that \cite{springer,webster}
\begin{align}
|E^s_{int}(t)| \le\notag&\\ \delta \|\nabla \phi_{mt}(t)\|^2&+\frac{C}{\delta}\|u_{mxt}(t)\|_0^2 \le \delta \|\nabla \phi_{mt}(t)\|^2+\epsilon \|\Delta u_{mt}(t)\|^2+C_{\epsilon,\delta}\|u_{mt}(t)\|^2
\end{align}
Using the boundedness of trajectories at the energy level $\sup_{t>0}\|u_{mt}(t)\|_0 \le C_R$, we then have that for all $\epsilon>0$
\begin{equation}
|E^s_{int}(t)| \le \epsilon\bE^s(t)+K_{\epsilon},
\end{equation}
and this results in the existence of constants $c,C,K>0$ such that \begin{equation}\label{equiv}
c\bE^s(t)-K \le \cE^s(t) \le C\bE^s(t)+K, ~~\forall t>0.
\end{equation}
Then the energy balance for the time differentiated equations is:
\begin{equation}\label{strongbalance}
\cE^s(t) +k\int_0^t \|w_t(\tau)\|^2 d\tau = \cE^s(0)+B(t),
\end{equation}
where $B(t)$ represents the contribution of the nonlinearity in the time-differentiated dynamics:
\begin{equation}
B(t) \equiv \int_0^t \left\lb \dfrac{d}{dt}(f(u_m)),w_t\right\rb_\Omega d\tau
\end{equation}
Using \eqref{equiv}, we have
\begin{equation}
\bE^s(t) \le C\bE^s(0)+K+|B(t)|
\end{equation}
We note immediately that if we can bound this term (perhaps using the LHS), we will obtain the estimate in \eqref{keybound1}. Continuing, we use the key decomposition (and symmetry) of the bracket given in \cite[p. 517]{springer}:
\begin{align}
\left\lb\dfrac{d}{dt}(f(u_m)),w_t \right\rb_{\Omega} = &~\lb [u_{mt},v(u_m)+F_0],u_{mtt}\rb+2\lb[u_m,v(u_m,u_{mt})],u_{mtt}\rb\notag\\
= & \dfrac{d}{dt}Q_0(t)+P_0(t)
\end{align}
where
\begin{align}
Q_0(t) = &-\|\Delta v(u_m,u_{mt})\|^2+ \frac{1}{2}\lb [u_{mt},v(u_m)+F_0],u_{mt}\rb_{\Omega}\\
P_0(t) = & -3\left \lb [u_t,v(u,u_t)],u_t\right\rb_{\Omega}.
\end{align}
Using the key property of the von Karman nonlinearity (resulting from the {\em sharp regularity of the Airy stress function} \cite{springer}):
\begin{align}
\|[w_1,v(w_2,w_3)]\|_0 \le C\|w_1\|_{2}\|w_2\|_2\|w_3\|_{2}, ~~
\end{align}
\begin{align}
|B(t)| \le &~ C\Big\{\|[u_{mt},v(u_m)+F_0]\|_0\|u_{mt}\|_0\Big|_0^t+\int_0^t\|[u_{mt},v(u_m,u_{mt})]\|_0\|u_{mt}\|_0 d\tau \Big\}\notag\\
\le & ~C_R\|u_{mt}(t)\|_2\|\|u_{mt}(t)\|_0+C\bE^s(0) + C_R\int_0^t\|u_{mt}\|_2^2\|u_{mt}\|_0 d\tau\\
\le &~\epsilon ||u_{mt}(t)||_2^2 +C_{\epsilon}(R)||u_{mt}||^2+C\bE^s(0) + C_R\int_0^t\|u_{mt}\|_2^2 d\tau
\end{align}
We again appeal to the hyperbolic estimate:
$$||u_{mt}(t)||^2_0 \le ||u_{mt}(0)||^2_0 +\int_0^t||u_{mtt}(\tau)||_0^2 d\tau.$$ Hence the above inequality, when used in conjunction with \eqref{eident}  and Gronwall's inequality, provide uniform boundedness of the higher norm energy $\bE^s(t)$ on any finite time interval $[0,T]$, for  any {\it finite }  $T$. \vskip.3cm
 \noindent {\em STEP 1.2}
 \vskip.1cm
 \begin{remark}
Using the variation of parameters formula, and global-in-time bounds of delayed plate trajectories in the energy topology, one can obtain the bounds in \eqref{keybound1} and \eqref{keybound2} straightforwardly, however this requires {\em large viscous damping}. We show below that these bounds (and the final result for strong stability for smooth initial data) can be obtained with minimal viscous damping and zero static damping. 
\end{remark}
 In order to obtain uniform-in-time boundedness for {\em plate solutions} (as in \ref{keybound1} and \ref{keybound2}), more work is needed. The needed argument depends on the reduction principle in Theorem \ref{rewrite}, where the flow dynamics are reduced
to a non-dissipative and a delayed term in the plate dynamics. This reduction is valid for sufficiently large times---thus the result of STEP 1.1 applies to all the times before the delayed model is valid.

We return to the delay representation of the  plate solutions (Theorem \ref{rewrite}) and utilize our finite horizon regularity result to obtain regular plate data at time $T$. Denoting by $(w(t),w_t(t) )  = (u_{mt},u_{mtt})$, we write
\begin{equation}\label{w}
w_{tt} + \Delta^2 w + (k + 1) w_t +\beta w =\frac{d}{dt} \left\{[ u, v(u) + F_0 ] \right\}- U \partial_x w - \frac{d}{dt} \left\{q(u)\right\}
\end{equation} which is valid for any $t>\max \{T,t^{\#}\}$.
We utilize an approach taken in \cite{springer} for the non-delayed plate. We will define a Lyapunov-type function for the time-differentiated, smooth dynamics so that we may use Gronwall, along with the finiteness of the dissipation integral (Corollary \ref{dissint}):
$$\int_0^{\infty}||u_{mt}||^2 d\tau = \int_0^{\infty} ||w||^2d\tau<+\infty.$$ 

\noindent Let us use some more convenient notation: $E(t)=\dfrac{1}{2}[||\Delta w||^2+||w_t||^2]$. Then define $$Q(t) = E(t)-Q_0(t)+(\nu+\beta)||w||^2,$$
where as above
\begin{align}
Q_0(t) =&~-\|\Delta v(u_m,u_{mt})\|^2+ \frac{1}{2}\lb [u_{mt},v(u_m)+F_0],u_{mt}\rb_{\Omega}\\
=&~-\|\Delta v(u_m,w)\|^2+ \frac{1}{2}\lb [w,v(u_m)+F_0],w\rb_{\Omega}\\
P_0(t) = &~ -3\left \lb [u_{mt},v(u_m,u_{mt})],u_{mt}\right\rb_{\Omega} \\
=&~-3\left \lb [w,v(u_m,w)],w \right\rb_{\Omega}\\
\left\langle \dfrac{d}{dt}[u_m,v(u_m)+F_0], w_t\right\rangle =&~ \dfrac{d}{dt}Q_0(t) + P_0(t)
\end{align}
Recalling the sharp regularity the Airy stress function:
$$ \| [v(u_1,u_2), u_3] \| \leq C \|u_2\|_2\|u_3\|_2 \|u_1\|_2$$
we directly have:
\begin{align}
\left\langle \dfrac{d}{dt}[u_m,v(u_m)+F_0], w_t\right\rangle \le & ~\Big(||~ [w, v(u_m)+F_0] ~||_0+2||~[u_m,v(u_m,w)]|| \Big)||w_t|| \\
\le &~C(R,F_0)||w||_2||w_t||,
\end{align}
which will be used later.

Recall that $\beta\ge 0$ is fixed for this portion of the calculation. $\nu,\mu >0$. Via the sharp regularity quoted above, we can choose $\nu(R)$ large enough so that $Q(t)$ is positive (keeping $\beta$ fixed) \cite{springer}; we make this choice and then keep $\nu$ fixed. 
We will now construct a Lyapunov-like function for the dynamics $T^w_t(x_0) = (w(t),w_t(t);w^t) \in \mathbf H$ with $x_0 \in \mathbf H$. There are constants $a_0(\nu)$ and $a_1(\nu)$ so that:
$${a_0}\Big[||\Delta w||^2+||w_t||^2\Big]\le  Q(t) \le {a_1}\Big[||\Delta w||^2+||w_t||^2\Big].$$
The parametric Lyapunov function is then:
$$W(t) = Q(t)+\varepsilon (w_t,w)+\mu\int_0^{t^*}\int_{t-s}^tE(\tau)d\tau ds,~~\varepsilon \in (0,\varepsilon_0).$$
The parameter $\mu$ will be addressed below.
The value of $\varepsilon_0$ is chosen so that 
\begin{equation}\label{abovebelow} \dfrac{1}{2}a_0 E_{\beta}(t) \le W(t) \le 2a_1E_{\beta}(t)+\mu t^*\int_{t-t^*}^tE(\tau),~~\forall~~\varepsilon \in (0,\varepsilon_0),\end{equation}
where $E_{\beta}(t) \equiv \dfrac{1}{2}\left[||\Delta w||^2+||w_t||^2+\beta ||w||^2\right]$, and we note that 
$$\mu \int_0^{t^*}\int_{t-s}^t E(\tau) d\tau ds \le \mu t^* \int_{t-t^*}^t E(\tau).$$
We compute $W'(t)$:
\begin{align}
\dfrac{d}{dt} W(t) =&~ \langle \Delta w,\Delta w_t\rangle +\langle w_t,w_{tt}\rangle -\dfrac{d}{dt}Q_0(t)+(\nu+\beta)\langle w,w_t\rangle  \\
&+\varepsilon \langle w_{tt},w\rangle +\varepsilon ||w_t||^2 +\mu t^*E(t)-\mu \int_{t-t^*}^t E(\tau)
\end{align}
Make the replacement:
$$w_{tt} = - \Delta^2 w-k_0w_t-\beta w-\dfrac{d}{dt}q^{u_m}-\dfrac{d}{dt}f(u)-Uw_x.$$
Then:
\begin{align*}
\dfrac{d}{dt} W(t) =&~(\mu t^*+\varepsilon -k_0)||w_t||^2+(\nu-\varepsilon k_0)\langle w,w_t\rangle -\langle [q^{u_m}]',w_t\rangle -\langle [f(u)]',w_t\rangle -U\langle w_x,w_t\rangle -\dfrac{d}{dt}Q_0 \\
&(\mu t^*-\varepsilon)||\Delta w||^2-\varepsilon\beta||w||^2+\varepsilon\langle [q^{u_m}]',w\rangle -\varepsilon\langle [f(u)]',w\rangle -\varepsilon U\langle w_x,w\rangle -\mu \int_{t-t^*}^t E(\tau) \end{align*}
%Rearranging and simplifying:
%\begin{align*}
%\dfrac{d}{dt} W(t) =&~(\mu t^*+\varepsilon -k_0)||w_t||^2+(\mu t^*-\varepsilon)||\Delta w||^2-\varepsilon\beta||w||^2-\mu \int_{t-t^*}^t E(\tau)\\
%&+P_0(t)-\varepsilon\langle [f(u)]',w\rangle  \\
%&-\varepsilon\langle [q^{u_m}]',w\rangle -\langle [q^{u_m}]',w_t\rangle \\
%&+(\nu-\varepsilon k_0)\langle w,w_t\rangle -\varepsilon U\langle w_x,w\rangle -U\langle w_x,w_t\rangle
%\end{align*}

Thus:
\begin{align}
\dfrac{d}{dt}W(t)&+k_0||w_t||^2+\varepsilon||\Delta w||^2+\varepsilon \beta ||w||^2+\mu \int_{t-t^*}^t E(\tau)d\tau  \\\nonumber
=&~(\mu t^*+\varepsilon)||w_t||^2+\mu t^* ||\Delta w||^2+P_0(t)-\varepsilon\langle [f(u)]',w\rangle \\\nonumber
&-{\varepsilon\langle [q^{u_m}]',w\rangle-\langle [q^{u_m}]',w_t\rangle }+(\nu-\varepsilon k_0)\langle w,w_t\rangle -\varepsilon U\langle w_x,w\rangle-U\langle w_x,w_t\rangle ,
\end{align}
Via \eqref{abovebelow}, choosing $\mu$ and $\varepsilon$ sufficiently small, there is a $\gamma(a_1,\nu,\beta,k_0,\mu,t^*)>0$ so that for any $\varepsilon \in (0,\varepsilon_0)$:
\begin{align}\label{zzz}
\dfrac{d}{dt}W(t)&+\gamma [E_{\beta}(t)+\int_{t-t^*}^t E(\tau) d\tau ] \le  C\Big\{ P_0(t)-\varepsilon\langle [f(u)]',w\rangle -{\varepsilon\langle[q^{u_m}]',w\rangle-\langle [q^{u_m}]',w_t\rangle}
\\\nonumber&+(\nu-\varepsilon k_0)\langle w,w_t\rangle-\varepsilon U\langle w_x,w\rangle -U\langle w_x,w_t\rangle\Big\}
\end{align}
\begin{lemma}\label{lyapo}
There exists a $\widetilde \gamma>0$ (with the same dependencies $\gamma$  above), and constant $M$ (depending on $R$ and the same quantities as $\widetilde \gamma$) so that:
\begin{equation}
W'(t)+\widetilde \gamma W(t)  \le M[1+||w||^2E(t)]
\end{equation}
\end{lemma}
\begin{proof}[Proof of Lemma \ref{lyapo}]

We recall: 
\begin{align}
q_t^{u_m}(t) =&\int_0^{2\pi}\frac{1}{2 \pi}[M^2_{\theta}\widehat u_m]\big(\xb(U,\theta,0),t \big)d\theta\nonumber\\
&-\int_0^{2\pi}\frac{1}{2 \pi}[M^2_{\theta}\widehat u_m]\big(\xb(U,\theta,t^*),t-t^* \big)d\theta\nonumber\\
&+\Big( \int_0^{t^*}\int_0^{2\pi}(U+\sin\theta)\frac{1}{2 \pi}[M^2_{\theta}\widehat u_m]_x\big(\xb(U,\theta,s),t-s \big)d\theta ds\Big)\nonumber\\
&+\Big(\int_0^{t^*}\int_0^{2\pi}(\cos\theta)\frac{1}{2 \pi}[M^2_{\theta}\widehat u_m]_y\big(\xb(U,\theta,s),t-s \big)d\theta ds\Big).
\end{align}

So, along with the global-in-time priori bounds for the lower energy energy level $(u,u_t)$,
$$\sup_{t \in [t_0,\infty)} \Big[||u_m||^2_2+||w||^2 \Big]<\infty,$$
we obtain:
\begin{align}
|P_0| \le &~C(1+||\Delta u||)||w||||\Delta w||^2 \\[.2cm]
 |\langle [f(u)]',w\rangle | \le &~ C(R)||w||||\Delta w|| \\[.2cm]
 (\nu-\varepsilon k_0)|\langle w, w_t\rangle | \le &~\delta ||w_t||^2+C_{\delta}(\nu,\varepsilon,k_0,R)\\[.2cm]
 \varepsilon U|\langle w_x,w\rangle+U|\langle w_x,w_t\rangle | \le &~\delta[ ||w_t||^2+||\Delta w||^2] +C_{\delta}(\varepsilon,U,R)\\[.2cm]
\varepsilon |\langle [q^{u_m}]',w \rangle| \le&~ C(\varepsilon)\left(||u_m(t)||^2_{2}+||u_m(t-t^*)||^2_{2}+\int_{-t^*}^0||u_m(t+\tau)||^2_2 d\tau\right) \\\nonumber
&+\delta ||w||_2^2+C_{\delta}(\varepsilon) ||w||^2 \\[.2cm]
\le &~\delta E(t) +C_{\delta}(\varepsilon,t^*,R)
\end{align}
And, noting the explicit estimates for the term $q_t^u$ performed in \cite[(6.1)]{delay}, we also have
\begin{eqnarray}
\langle \partial_t q^{u_m}, w_t \rangle \leq \delta ||w_t||^2 +C_{\delta} \left(\|u(t)\|^2_{2,\Omega}
 + \| u(t-t^*)\|^2_{2,\Omega} +
  \int_{-t^*}^0 \|u(t+\tau)\|^2_{3,\Omega} ) d \tau\right).
\end{eqnarray}

\noindent Then we note: $$\|u\|^2_{3} \leq \delta \|u\|^2_4 + C_{\epsilon}(R)\|u\|^2_2 \leq C_{\epsilon}  + \epsilon \|u\|^2_4, $$
where $\|u\|_4 $ is to be calculated from an elliptic equation in terms of $\|W(t)\|_{{Y}_{pl}}$.
Indeed, we consider biharmonic problem with  the clamped boundary conditions.
$$ \Delta^2 u_m = - w_t - (k+1) w  + [u_m, v(u_m)+F_0 ] -Uu_{mx} - q^{u_m}-\beta u_m$$
with
$$u = \Dn u =0 ~\text{on}~ \partial \Omega.$$
This gives
\begin{align*}\|u_m\|_4 \leq& ~C  \|w_t - k_0 w  + [u_m, v(u_m) + F_0 ] -u_{mx}-\beta u_m - q^{u_m}\|
\\
\leq&~ C  \left[\|(w,w_t)\|_{Y_{pl}} +{ \|u_m\|_1 + \|u_m\|^3_2 + \|u_m\|^2_2 }\right].
\end{align*}
 We have
$$\|u_m\|^2_4 \leq  C(R) [E(t) + {1}].$$

\begin{equation}
\langle [q^{u_m}]',w_t \rangle \le ||w_t||_0 \left|\left|\dfrac{d}{dt} q^{u_m} \right|\right|_0 \le \delta \left[E(t)+\int_{t-t^*}^t E(\tau) d\tau\right] + C_{\delta}(t^*,R)
\end{equation}

Combining these estimates, we return to  \eqref{zzz}:
\begin{align}
\dfrac{d}{dt}W(t)+\gamma [E_{\beta}(t)+\int_{t-t^*}^t E(\tau) d\tau]=&~  C\Big\{ P_0(t)-\varepsilon([f(u)]',w)-{\varepsilon([q(u^t)]',w)-([q(u^t)]',w_t)}
\\\nonumber&+(\nu-\varepsilon k_0)(w,w_t)-\varepsilon U(w_x,w)-U(w_x,w_t)\Big\} \\
\le &~ \delta [E(t)+\int_{t-t^*}^t E(\tau)d\tau] +C_{\delta}(\varepsilon,\nu,R) ||w||^2E(t) \\\nonumber
&+ C_{\delta}(R,\nu,\varepsilon,t^*,U,k_0)
\end{align}
Scaling $\delta$ appropriately and absorbing terms, we have:
$$\dfrac{d}{dt}W(t)+\gamma [E_{\beta}(t)+\int_{t-t^*}^t E(\tau) d\tau] \le M+C||w||^2E(t).$$
Recalling \eqref{abovebelow}, we have some $\widetilde \gamma>0 $ such that:
$$\dfrac{d}{dt}W(t)+\widetilde \gamma W(t) \le M+C||w||^2E(t).$$
\end{proof}
Having completed the proof of Lemma \ref{lyapo}, we note \eqref{abovebelow}. This gives:
$$\dfrac{d}{dt}W(t)+\widetilde \gamma W(t) \le M+C_1||w||^2 W(t),$$
and utilizing the integrating factor we see that
$$\dfrac{d}{dt}\left(e^{\widetilde \gamma t}W(t) \right) \le Me^{\widetilde \gamma t}+C_1e^{\widetilde \gamma t}||w||^2W(t).$$
Integrating, we have:
$$e^{\widetilde \gamma T} W(T) \le e^{\widetilde \gamma t_0}W(t_0)+\dfrac{M}{\widetilde \gamma}\left[e^{\widetilde \gamma T}-e^{\widetilde \gamma t_0}\right]+C_1\int_{t_0}^Te^{\widetilde \gamma t}||w(t)||^2W(t)dt,$$
and thus
\begin{align*} W(T) \le&~ e^{-\widetilde \gamma(T-t_0)} W(t_0)+\dfrac{M}{\widetilde \gamma}\left[1-e^{-\widetilde \gamma (T-t_0)}\right]+C_1\int_{t_0}^T||w(t)||^2W(t)dt \\
\le &~\dfrac{M}{\widetilde \gamma}+e^{-\widetilde \gamma(T-t_0)} \left[W(t_0)-\dfrac{M}{\widetilde \gamma}\right]+C_1\int_{t_0}^T||w(t)||^2W(t)dt.
\end{align*}
 Let $$\alpha(T) = \dfrac{M}{\widetilde \gamma}+e^{-\widetilde \gamma(T-t_0)} \left[W(t_0)-\dfrac{M}{\widetilde \gamma}\right],$$ and note that $\alpha(T) \ge 0$ for $T\ge t_0$. Then, using Grownall, we have:
$$W(T) \le \alpha(t)+\int_{t_0}^T \alpha(t)||w(t)||^2\exp\left\{\int_t^T||w(s)||^2ds\right\} dt.$$ Note that $\alpha(T) \ge 0$ and $\alpha$ is bounded, and thus, owing to the finiteness of the dissipation integral $$\int_{t_0}^{\infty}||w||^2 dt=\int_{t_0}^{\infty} ||u_{mt}||_0^2dt<+\infty,$$ we obtain global-in-time boundedness of $W(t)$. Then, using \eqref{abovebelow}, we have global-in-time boundedness of $$E_{\beta}(t)=\dfrac{1}{2}\big[||\Delta u_{mt}||^2+||u_{mtt}||^2+\beta ||u_{mt}||^2\big].$$Via elliptic theory (as utilized above), we also have 
$||u_m(t)||_4^2 $ globally bounded in time. We note that $W(t_0)$ has the requisite regularity by STEP 1.1 above (where we have shown propagation of regularity on the finite time horizon for the full flow-plate dynamics).

This concludes the proof of Theorem \ref{highernorms} and STEP 1.2
\end{proof}

 \noindent {\em STEP 2: Flow Compactness for Smooth Initial Data}
 \vskip.1cm
 \noindent We first note at this point that the solution (via classical scattering theory \cite{lax}) to \eqref{flow1} is stable, so $(\phi_m^*(t_n),\phi^*_{mt}(t_n)) \to (0,0)$ in the local flow energy topology $\widetilde Y_{fl}$.

 We now utilize the bounds in \eqref{keybound1} and \eqref{keybound2}, along with the bound in Lemma \ref{compact2}
 to bound, uniformly, the flow in higher norms. Specifically, we note that on the strength of Lemma \ref{highernorms}
 \begin{align}
 ~~u_m \in &~C^2(0,\infty;L_2(\Omega))\cap C^1(0,\infty;H_0^2(\Omega)) \cap C(0,\infty;H^4(\Omega)\cap H_0^2(\Omega))
 \end{align}
  This implies, in particular, that ~~$u_m\in H^2(L_2(\Omega)) \cap H^1(H^2(\Omega)) \cap L_2(H^4(\Omega)). $
 With~~  $u_m\in L_2(H^4(\Omega)) \cap H^1(H^2(\Omega)) $, via interpolation \cite{LM,interp}, we have
 \begin{equation}\label{1}
 u_m \in H^{\alpha_1}(H^{2+ 2 (1-\alpha_1) }(\Omega) ), ~~\alpha_1\in [0,1].
 \end{equation}
 Here and below we utilize the notation $ H^s(H^r(\Omega) ) $ meaning that the first regularity $H^s$ refers to time
 variable. Since time dependence in the formula representing the Neumann map spans only finite time interval of length $ t^*(\Omega,U)$---
this abbreviation  of notation should not lead to any  confusion.

 Similarly, from $\displaystyle u_{mt}\in L_2(H^2(\Omega)) \cap H^1(L_2(\Omega)) $
 we obtain
 \begin{equation}\label{2}
 u_{mt}\in H^{\alpha_2}( H^{2(1-\alpha_2)}(\Omega)),~~ \alpha_2 \in [0,1]
 \end{equation}
 To apply inequality in Lemma \ref{compact2} with $s=0,~\beta > 0$ we will take $\alpha_1 = \beta  =\alpha_2 $
 \begin{align*}
4 -2 \alpha_1 \geq &~2+ \alpha_1;~ \text{so}~~  2 \geq 3 \alpha_1 \\ \nonumber
\text{ and }& \\
  2 -2 \alpha_2 \geq &~1 + \alpha_2 ;~ \text{so}~~ 1 \geq 3 \alpha_2. \end{align*}
Hence, for
 $\beta \leq 1/3 $  we have:

 $\|\nabla \phi_m^{**}(t)\|^2_{\beta, K_{\rho} }+\|\phi_m^{**}(t)\|^2_{\beta, K_{\rho} }$ $$ \le \left\{\|u_m(t)\|^2_{H^{s+\beta}(t-t^*,t;H_0^{2+\beta}(\Omega))}+\|u_{mt}(t)\|^2_{H^{s+\beta}(t-t^*,t;H_0^{1+\beta}(\Omega))} \right\} \le C(R).$$ Then, applying the compactness criterion in Lemma \ref{compactnesscriterion} we have shown the following lemma:

 \begin{lemma}\label{limitpoint} Along some subsequence $t_n$
$$(u_m(t_n),u_{mt}(t_n);\phi_m(t_n),\phi_{mt}(t_n)) \to (\hat u, 0; \hat \phi, \hat \psi)$$ in $\widetilde Y$ as $n \to \infty$.
\end{lemma}
\vskip.3cm
\noindent {\em STEP 3: Characterization of the Flow Limit Point $\hat \psi$}
\vskip.1cm

\noindent Now, to further characterize the flow limit point, we return to the formula in Theorem \ref{flowformula}.
\begin{lemma}\label{characterize}
The limit point $\psi \in \widetilde Y$ in Lemma \ref{limitpoint} above is identified as $0$ in $L_2(K_{\rho})$ for any $\rho>0$.
\end{lemma}
The flow solution $(\phi_m^*(t),\phi_{mt}^*(t))$ tends to zero (again, we emphasize that this convergence is in the local flow energy sense); hence, we have that $\phi_m^{**}(t_n) \to \hat \phi$. To identify $\hat \psi$ with zero, we note that $u_{mt}(t) \in H^1(\Omega)$. This allows to differentiate the flow formula in Theorem \ref{flowformula} in time (as was done in \cite{springer} and \cite{ryz2}):
\begin{align}\label{thisonenow}
\phi^{**}_{mt}(\xb, t) = & \dfrac{1}{2\pi}\Big\{\int_0^{2\pi}u_{mt}^{\dag}(\xb,t,t^*,\theta) d\theta -\int_0^{2\pi} u_{mt}^{\dag}(\xb,t,z,\theta )d\theta \\\nonumber
+U\int_z^{t^*}\int_0^{2\pi}&[\partial_x u_{mt}^{\dag}](\xb,t,s,\theta) d\theta ds+\int_z^{t^*}\int_0^{2\pi}\dfrac{s}{\sqrt{s^2-z^2}}[M_{\theta}u_{mt}^{\dag}](\xb,t,s,\theta) d\theta ds\Big\} \end{align} where $$f^{\dag}(\xb,t,s,\theta)=f\left(x-\kappa_1(\theta,s,z), y-\kappa_2(\theta,s,z), t-s\right),$$ and $$\kappa_1(\theta,s,z)=Us+\sqrt{s^2-z^2}\sin \theta,~~\kappa_2(\theta,s,z) = \sqrt{s^2-z^2}\cos \theta.$$

\noindent For a fixed $\rho >0$, we multiply $\phi_{mt}^{**}(\xb,t)$ by a smooth function $\zeta \in C_0^{\infty}(K_{\rho})$ and integrate by parts in space---in \eqref{thisonenow} we move $\partial_x$ onto $\zeta$ in the third term and the partials $\partial_x,~\partial_y$ from $M_{\theta}$ onto $\zeta$ in term four.
This results in the bound
\begin{equation}\label{whatwe}
|(\phi^{**}_{mt},\zeta)_{L_2(K_{\rho})}| \le C(\rho)\sup_{\tau \in [0,t^*]}\|u_{mt}(t-\tau)\|_{0,\Omega}\|\zeta\|_{1,K_{\rho}}
\end{equation}

From this point, we utilize the fact that $u_{mt}(t) \to 0$ in $L_2(\Omega)$ (Theorem \ref{convergenceprops}), and hence $(\phi_{mt}^{**}(t),\zeta)_{K_{\rho}} \to 0$ and $\phi^*_{mt} \to 0$ in $L_2(K_{\rho})$ for any fixed $\rho>0$. This gives that $\phi_{mt}(t) \rightharpoonup 0$ in $L_2(\Omega)$, and we identify the limit point $\widehat \psi$ with $0$ in $L_2(\Omega)$.
\end{proof}

 \vskip.3cm
\noindent {\em STEP 4: Limit Points as Weak Solutions}
\vskip.1cm
In this section we show that the limit points obtained above satisfy the static problem in a weak sense.
\begin{lemma}\label{staticsols}
The pair $(\hat u, \hat \phi)$ as in Lemma \ref{limitpoint} and Lemma \ref{characterize} satisfies the stationary problem \eqref{static} in the variational sense.
\end{lemma}
We begin by multiplying the system \eqref{flowplate} by $\eta \in C_0^{\infty}(\Omega)$ (plate equation) and $\psi \in C_0^{\infty}(\realsthree_+)$ (flow equation) and integrate over the respective domains. This yields
\begin{equation}\begin{cases}
\lb u_{tt},\eta\rb +\lb \Delta u, \Delta \eta\rb +\lb u_t,\eta\rb +\lb f(u),\eta\rb  = \lb p_0,\eta\rb -\lb tr[\phi_t+U\phi_x],\eta\rb  \\
\left(\phi_{tt},\psi\right)+U(\phi_{tx},\psi)+U(\phi_{xt},\psi)-U^2(\phi_{x},\psi_x)=-(\nabla \phi, \nabla \psi)+\lb u_t+Uu_x, tr[\psi]\rb .
\end{cases}
\end{equation}

Now, we consider the above relations evaluated at the points $t_n$ (identified as a subsequence for which the various convergences above hold), and integrating in the time variable from $t_n$ to $t_n+c$. Limit passage on the linear terms is clear, owing to the main convergence properties for the plate component in the Theorem \ref{convergenceprops}. The locally Lipschitz nature of the von Karman nonlinearity allows us to pass with the limit on the nonlinear term (this is by now standard, \cite{springer}). We then arrive at the following static relations:
\begin{equation}\begin{cases}
\lb \Delta \hat u, \Delta w\rb+\lb f(u),w\rb = \lb p_0,w\rb+U\lb tr[\hat \phi],w_x\rb \\
(\nabla \hat \phi, \nabla \psi)-U^2(\hat \phi_{x},\psi_x)=U\lb \hat u_x, tr[\psi]\rb.
\end{cases}
\end{equation}
This implies that our limiting point $(\hat u,0; \hat \phi,0)$ of the sequence

\noindent $\left(u(t_n),u_t(t_n);\phi(t_n),\phi_t(t_n)\right)$ is in fact a solution (in the weak sense) to the static equations, i.e., it is a {\em stationary solution} to the flow-plate system \eqref{flowplate}. We have thus shown that any trajectory contains a sequence of times $\{ t_n\}$ such that, along these times, we observe convergence to a solution of the stationary problem.
\end{proof}
\subsection{Proof of Theorem \ref{weakth}}
We point out that item 1 in Theorem \ref{convergenceprops} guarantees that for any sequence $t_n \to \infty$ there is weak convergence  in $Y_{\rho}$ along a subsequence $t_{n_k}$ of the trajectory $S_{t_{n_k}}(y^0)$ to some point $\hat y=(\hat u, \hat w; \hat \phi, \hat \psi) \in Y_{\rho}$. Moreover, item 3 of Theorem \ref{convergenceprops} guarantees that (owing to the strong convergence) $\hat w =0$. We then note that \eqref{whatwe} from STEP 3 of the proof of Theorem \ref{regresult} above holds, and hence $\psi=0$. Since \eqref{thisonenow} holds in distribution, STEP 4  may be applied (as it is a variational argument) to move spatial derivatives onto the test function in order to conclude that $(\hat u, 0; \hat \phi, 0)$ satisfies \eqref{static} in the weak sense.
\section{Uniform-in-Time Hadamard Convergence of Semigroups---Proof of Theorem \ref{strongcont}}\label{6}

Our goal is to show Theorem \ref{strongcont}.
We begin by showing a Lipschitz estimate on any finite time interval. Consider the difference of two full flow-plate trajectories on a finite time horizon $[0,T]$. Considering initial data $y^0_m$ and $y^0$ taken from some ball of radius $R$ in $Y$, we have the following energy identity for the difference (applying the respective velocity multipliers---$(u_t-u_{mt})$ and $(\phi_t-\phi_{mt})$---to both the plate and flow equations):
 \begin{align}\label{needed}
 E_z(t) + \int_0^tk\|\bu_t\|^2 d\tau = &~E_z(0)+2U\lb \bp(0),\bu_x(0)\rb\notag\\&+\int_0^t \left\lb f_V(u)-f_V(u_m),u_t-u_{mt}\right\rb_{\Omega} d\tau -2U\lb \bp,\bu_x\rb \end{align}
 where we have utilized the notation \begin{align*}
 \bu = & ~u-u_m;~~
 \bp =  \phi-\phi_m \\
 E_z=&~ \frac{1}{2}\left(\|\Delta \bu\|^2+\|\bu_t\|^2+\beta\|\bu\|^2+\|\nabla \bp\|^2 -U^2\|\bp_x\|^2+\|\bp_t\|^2\right)
 \end{align*}
Utilizing the bounds on $E_{int}$ as in Lemma \ref{energybound}, as well as the global-in-time energy bounds (Lemma \ref{globalbound})
 one obtains
 \begin{align}
 E_z(t) + \int_0^t k\|\bu_t\|^2d\tau & \nonumber \\ \le C(U)E_z(0)+&C\|\bu_x(t)\|^2+\int_0^t\big[C(\epsilon)\|f_V(u)-f_V(u_m)\|_{0,\Omega}^2+\epsilon\|\bu_t\|_0^2 \big]d\tau
 \end{align} Via compactness $\|\bu_x\|^2 \le \epsilon \|\Delta \bu \|^2 + C(\epsilon) \|\bu\|^2$, and hence
 \begin{align}
 E_z(t) + \int_0^t k\|\bu_t\|^2d\tau & \nonumber \\ \le C(U)E_z(0)+&C\|\bu(t)\|^2+\int_0^t\big[C(\epsilon)\|f_V(u)-f_V(u_m)\|_{0,\Omega}^2+\epsilon\|\bu_t\|_0^2 \big]d\tau\end{align}
 Observing that \begin{equation}\label{oneusing} \|\bu(t)\|^2 = \left|\left|\int_0^t \bu_t(\tau)d\tau +\bu(0) \right|\right|^2 \le \int_0^t \|\bu_t\|^2d\tau +\|\bu(0)\|^2 \le \int_0^t E_z(\tau)d\tau+E_z(0)\end{equation}
 and invoking the locally Lipschitz character of $f_V$ \cite[p.44]{springer}, we have
 \begin{align}
 E_z(t) + \int_0^t k\|\bu_t\|^2d\tau \le C(U)E_z(0)+C(R)\int_0^t E_z(\tau)d\tau
 \end{align}
  (where $C(R)$ denotes the dependence of the constant on the size of the ball containing the initial datum $y^0$ and $y^0_m$).
 Gronwall's inequality then yields that
 \begin{equation}\label{gronwall} E_z(t) \le C(U,R,T)E_z(0). \end{equation}
Hence, we obtain a Hadamard continuity of the semigroups on any $[0,T]$:
 $$S_{t} (y^0_{m}) \rightarrow y(t)  \in Y,$$ for $y^0_m \to y^0 \in Y,$
 for all $t< T$, with this strong convergence depending---possibly---on $T > 0$.

We now address this continuity on the infinite-time horizon. We return to \eqref{needed}, the energy identity for the difference of two solutions (both plate and flow):
\begin{align}
E_z(t)+\int_{0}^t k\|\bu_t\|^2 d\tau \le & ~ C\Big\{E_z(0)+\|\bu(t)\|^2_1+\Big|\int_{0}^t\langle f(u)-f(u_m),\bu_t\rangle d\tau\Big| \Big\}\notag\\
\le C\Big\{E_z(0)+\epsilon E_z(t)&~ +\|\bu (t)\|^2+\int_{0}^t \| f(u)-f(u_m)\|_0\| \bu_t\|_0 d\tau \Big\}
\end{align}
Again using \eqref{oneusing}, we have:
\begin{align}
E_z(t)+\int_{0}^t k\|\bu_t\|^2 d\tau \le & ~ C\Big\{E_z(0)+\int_0^t\|\bu_t\|^2 d\tau+\int_{0}^t \| f(u)-f(u_m)\|_0\| \bu_t\|_0 d\tau \Big\}
\end{align}
We invoke the locally Lipschitz character of $f_V$:
\begin{align*}E_z(t)+\int_{0}^t k\|\bu_t\|^2 d\tau \le & ~ C\Big\{E_z(0)+\int_{0}^t\big(\|\bu_t\|^2+C(R)\|\bu_t\| \big)d\tau \Big\}\end{align*}
 Via the uniform exponential decay in Theorem \ref{exp}, and the existence of an absorbing ball $\mathscr B$, for every $\epsilon>0$ there exist a time $T^*>t^{\#}$ ($T^*$ depending on the underlying parameters of the problem $p_0,F_0,\Omega,U$, as well as the time of absorption for $\mathscr B$, and the size of the support of the initial flow data $\rho$) such that $$\int_{T^*}^{\infty}\big(\|\bu_t\|^2+C(R)\|\bu_t\|\big) d\tau \le \epsilon.$$

 Thus, for any $t>T^*$  we may write:
 \begin{align}E_z(t) \le &   C\Big\{E_z(0)\!+\!\int_{0}^{T^*}\big(\|\bu_t\|^2\!+\!C(R)\|\bu_t\|\big) d\tau\!+\!\int_{T^*}^{\infty}\big(\|\bu_t\|^2\!+\!C(R)\|\bu_t\|\big) d\tau \Big\} \end{align}
 Utilizing \eqref{gronwall}, for any $\epsilon>0$ we have:
 \begin{align}
 E_z(t) \le C(R,T^*)\big(E_z(0)+E^{1/2}_z(0)\big)+\epsilon/2
 \end{align}
 Taking $y^0_m$ sufficiently close to $y^0$ will yield that $C(R,T^*)\big(E_z(0)+E_z^{1/2}(0)\big) \le \epsilon /2$.

 This concludes the proof of Theorem \ref{regresult}.
\section{Sensitivity analysis and discussion of damping coefficients}\label{sensi}
In this final section we provide a discussion focusing on the need for the sufficiently large damping parameters $k_0$ and $\beta_0$.

First, we recall that in \cite{delay} the existence of a global attracting set for the plate dynamics can be shown in the absence of imposed mechanical damping, i.e., $k=0$ and $\beta=0$. (See the introductory discussion in Section \ref{prevv} and Theorem \ref{th:main2}.) Moreover, the size of a corresponding absorbing ball does not depend on the size of the damping parameters.

In the context of the results discussed herein, we see that Theorem \ref{regresult}---convergence to the equilibrium set for smooth initial data---is possible {\em only considering minimal damping}, i.e., for any $k>0$ and with $\beta=0$. This is in line with previous considerations where plate trajectories are ``regular" (via thermoelastic smoothing, or via rotational inertia). However, in all of these situations various notions of compactness in the plate dynamics can be easily transferred to the flow component of the dynamics. In fact, the main technical challenge in previous analyses of flow plate systems is recovering compactness (and convergence properties) in the flow {\em via the coupling}. Indeed, utilizing the reduction result in Theorem \ref{rewrite} we may obtain compact limiting behavior, as well as show $\|u_t(t)\|\to 0$ as $t\to \infty$. However, owing to the failure of the uniform Lopatinski condition for the dynamics, passing plate information (dissipation) directly to the flow is not immediately possible. The approach we have taken here is to utilize an approximation argument in the vein of \cite{ball,koch}; by considering regular initial data, we may approximate our original, finite energy initial data in $Y$. We then work to pass the convergence properties associated with trajectories emanating from regular initial data onto the original finite energy initial data. Such an argument (since it considers $t\to \infty$) is possible when the dynamics are uniformly in time Hadamard continuous.

We now show that with mild damping---any $k>0$ and $\beta=0$---we can obtain uniform-in-time convergence of semigroup norms.
 \begin{theorem}
 For a sequence of initial data $y^0_m \in Y$ such that $y^0_m \to y^0 \in Y, ~m \to \infty$ we have that
 $$\sup_{t>0}\Big|\|S_t(y^0_m)\|_{Y_{\rho}}-\|S_t(y^0)\|_{Y_{\rho}}\Big| \to 0, ~~m \to \infty,$$ where $S_t(\cdot)$ is the semigroup as in Theorem \ref{nonlinearsolution}.
 \end{theorem}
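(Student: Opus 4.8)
The plan is to separate the time axis into a finite window $[0,T]$ and a tail $[T,\infty)$: the window is handled by the finite-horizon Lipschitz bound already established, while on the tail one works with the \emph{clean} per-trajectory energy balance \eqref{eident} --- which, unlike the balance \eqref{needed} for a difference of solutions, carries no nonlinear error term --- together with the flow decomposition $\phi=\phi^{*}+\phi^{**}$ and the dispersive decay of $\phi^{*}$. Throughout write $N_m(t):=\|S_t(y^0_m)\|_{Y_\rho}$ and $N(t):=\|S_t(y^0)\|_{Y_\rho}$, fix $R$ with $\{y^0_m\}\cup\{y^0\}\subset B_R(Y)$, and (using density) arrange that every $y^0_m$ has flow data supported in the same ball $K_{\rho_0}$ as $y^0$. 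Recall also that $|N_m(t)-N(t)|\le|N_m(t)^2-N(t)^2|^{1/2}$ since $N_m,N\ge0$, so it suffices to make $\sup_t|N_m(t)^2-N(t)^2|\to0$.

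On $[0,T]$ I would invoke the finite-time Hadamard continuity \eqref{gronwall}, valid for every $k>0$, $\beta\ge0$, which gives $\sup_{t\le T}\|S_t(y^0_m)-S_t(y^0)\|_{Y}\to0$ and hence, via \eqref{hypertime}, $\sup_{t\le T}|\mathcal E_m(t)-\mathcal E(t)|\to0$. For the tail the crucial point is that with $\beta=0$ each $\mathcal E_m(\cdot)$ is nonincreasing and bounded below (Lemma~\ref{energybound}), so it converges to $\mathcal E_{m,\infty}:=\inf_t\mathcal E_m(t)$ and $k\int_0^t\|u_{mt}\|^2\,d\tau\uparrow k\int_0^\infty\|u_{mt}\|^2\,d\tau=\mathcal E_m(0)-\mathcal E_{m,\infty}$. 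Subtracting the identities \eqref{eident} for the two trajectories yields $\sup_t|\mathcal E_m(t)-\mathcal E(t)|\le|\mathcal E_m(0)-\mathcal E(0)|+k\sup_t\big|\int_0^t(\|u_{mt}\|^2-\|u_t\|^2)\,d\tau\big|$; since $t\mapsto\int_0^t\|u_{mt}\|^2$ and $t\mapsto\int_0^t\|u_t\|^2$ are monotone and bounded on $[0,\infty]$ and converge pointwise once one knows $\int_0^\infty\|u_{mt}\|^2\to\int_0^\infty\|u_t\|^2$, the classical fact that monotone functions converging pointwise to a continuous limit converge uniformly upgrades this to $\sup_{t>0}|\mathcal E_m(t)-\mathcal E(t)|\to0$. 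So the whole matter reduces to the dissipation-integral convergence $\mathcal E_{m,\infty}\to\mathcal E_\infty$. I would prove this by identifying $\mathcal E_{m,\infty}$ with the energy level of the $\omega$-limit set: along any $t_n\to\infty$, $u_{mt}(t_n)\to0$ in $L_2(\Omega)$, and by item~2 of Theorem~\ref{convergenceprops} and Theorem~\ref{th:main2} one has $u_m(t_n)\to\hat u_m$ \emph{strongly} in $H^2_0(\Omega)$ along a subsequence with $(\hat u_m,\hat\phi_m)$ a stationary solution, while $\phi^{*}_m(t_n)\to0$ in local flow energy \eqref{starstable} and $\phi^{**}_m(t_n)$ is governed by the frozen plate profile via Theorem~\ref{flowformula}; this pins $\mathcal E_{m,\infty}$ to a continuous functional of $(\hat u_m,\hat\phi_m)$ and of the data, and continuity along the sequence is delivered by the $B_R$-uniform a priori bounds of Theorem~\ref{nonlinearsolution} together with the data-uniform (and damping-uniform) absorbing ball of Lemma~\ref{indeppara}.

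It remains to transfer the uniform convergence of the energies $\mathcal E$ to uniform convergence of the $Y_\rho$-norms $N$. By Huygens' principle there is a time $t_\rho$ depending only on $\rho,\rho_0,U$ --- not on the size of the data --- past which $\phi^{*}\equiv\phi^{*}_m\equiv0$ on $K_\rho$, so for $t\ge\max\{t_\rho,t^{\#}\}$ the flow part of $N_m(t)^2$ comes purely from $\phi^{**}_m$, whose restriction to $K_\rho$, together with that of its time derivative \eqref{formderiv1}, is produced from $u_m\big|_{[t-t^*,t]}$ by one fixed representation operator (Theorem~\ref{flowformula}). Expanding $N_m(t)^2-\mathcal E_m(t)$, the discrepancy is on the tail a continuous function of the plate position $u_m(t)$ in $H^2_0(\Omega)$ and of the window $u_m\big|_{[t-t^*,t]}$; on the tail this window lies within $o(1)$ --- the short-window variation being $\le\int_{t-t^*}^t\|u_{mt}\|\,d\tau\to0$ --- of a single profile near the \emph{smooth} compact attracting set $\mathscr U$ of Theorem~\ref{th:main2}, on which the representation operator is genuinely bounded into $Y_{fl,\rho}$ by Lemma~\ref{compact2}. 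Using the $[0,T]$ estimate to control the window at the left endpoint $t=T$ and the attractor structure for $t\ge T$, one obtains $\sup_{t\ge T}\big|\,(N_m(t)^2-\mathcal E_m(t))-(N(t)^2-\mathcal E(t))\,\big|\to0$; combined with $\sup_t|\mathcal E_m(t)-\mathcal E(t)|\to0$ and $|N_m-N|\le|N_m^2-N^2|^{1/2}$, this gives $\sup_{t>0}|N_m(t)-N(t)|\to0$.

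The main obstacle is twofold, and both difficulties are resolved by the \emph{plate} long-time theory of \cite{delay} rather than by damping. First, the uniform dissipation-tail control $\int_0^\infty\|u_{mt}\|^2\to\int_0^\infty\|u_t\|^2$ (equivalently $\mathcal E_{m,\infty}\to\mathcal E_\infty$) must be obtained with no decay rate available; it follows from the compact, finite-dimensional, \emph{damping-independent} attracting set of Theorem~\ref{th:main2} and the data-uniform absorbing ball of Lemma~\ref{indeppara}, which identify the limiting energy level as a continuous functional of the $\omega$-limit. Second, the reduced flow representation of Theorem~\ref{flowformula} loses a derivative relative to the finite-energy plate norm --- precisely the Lopatinski failure motivating the paper --- so $\|(\phi^{**},\phi^{**}_t)\|_{Y_{fl,\rho}}$ is not controlled by $\|(u,u_t)\|_{Y_{pl}}$ for generic energy data; this is circumvented because on the infinite-time tail the relevant plate data is not generic: the velocities have died and the positions have entered the smooth set $\mathscr U$, where Lemma~\ref{compact2} restores boundedness. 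It is exactly because $\mathscr U$ and the absorbing ball exist for every $k>0$, $\beta\ge0$, with no largeness requirement, that this continuity-of-norms statement needs only minimal damping, in contrast with Theorem~\ref{strongcont}, where controlling the nonlinear cross term $\int_0^t\langle f(u)-f(u_m),\bu_t\rangle$ on the whole half-line forced the exponential decay of Theorem~\ref{exp} and hence large $k,\beta$.
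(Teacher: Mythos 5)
Your reduction of the problem to the single scalar convergence $\int_0^\infty\|u_{mt}\|^2\,d\tau\to\int_0^\infty\|u_t\|^2\,d\tau$ (equivalently $\mathcal E_{m,\infty}\to\mathcal E_\infty$), followed by the P\'olya/Dini upgrade from pointwise to uniform convergence of the monotone dissipation integrals, is a clean reformulation, and the inequality $\liminf_m\int_0^\infty\|u_{mt}\|^2\ge\int_0^\infty\|u_t\|^2$ is indeed free from Fatou plus finite-horizon continuity. The genuine gap is in the opposite inequality, which is exactly where you appeal to ``identifying $\mathcal E_{m,\infty}$ with the energy level of the $\omega$-limit set'' and to ``continuity along the sequence.'' Two things fail there. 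First, with only minimal damping the convergence of $S_{t_n}(y_m^0)$ to a stationary point is \emph{weak} (Theorem \ref{weakth}); strong convergence is precisely Theorem \ref{conequil1} and costs large damping. Under weak convergence one only gets $\mathcal E_{m,\infty}\ge \cE(\hat u_m,0;\hat\phi_m,0)$, with possible strict inequality (energy escaping in the flow), so $\mathcal E_{m,\infty}$ is not pinned to the limit point. Second, even granting that identification, nothing in the $B_R$-uniform a priori bounds or the damping-independent absorbing ball forces the limiting equilibria of nearby trajectories---or their energy levels---to be close: that is exactly the uniform-in-time sensitivity question that the paper resolves only with large damping via Theorem \ref{exp}. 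Your argument at this point is essentially assuming a continuity of the asymptotic dynamics that is not available under the stated hypotheses.

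The paper closes this step by a different, Ball-type mechanism that you never invoke: combine the one-sided energy identity $k\int_0^t\|u_{mt}\|^2=\cE_m(0)-\cE_m(t)$ with the \emph{weak lower semicontinuity of the energy functional} $\liminf_m\cE_m(t)\ge\cE(t)$ (coming from $S_t(y_m^0)\rightharpoonup S_t(y^0)$, available from finite-horizon continuity). This yields $\limsup_m\int_0^t\|u_{mt}\|^2\le\int_0^t\|u_t\|^2$ for each $t$ with no reference to $\omega$-limit sets, and together with the Fatou half gives $\cE_m(t)\to\cE(t)$; the passage from energies to $Y_\rho$-norms is then handled by observing that $\mathbf E-\mathbf E_m$ differs from $\cE-\cE_m$ by compact lower-order terms on which weak convergence is automatically strong. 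I would also flag your final ``energy to norm'' step independently: the discrepancy $N_m(t)^2-\cE_m(t)$ contains the flow energy \emph{outside} $K_\rho$, which is not a function of the plate window $u_m|_{[t-t^*,t]}$ via Theorem \ref{flowformula}, and Lemma \ref{compact2} requires higher plate norms that generic finite-energy trajectories do not possess; but the decisive defect is the unjustified convergence $\mathcal E_{m,\infty}\to\mathcal E_\infty$.
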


\begin{proof}
 To accomplish the task we will  be making use of the energy identity established in \cite{webster,springer,jadea12} for weak solutions.

We consider a sequence of initial data from $y^0_m \in \cD(\bT)$ such that $ y^0_m \to y_0$ strongly in $Y$. Denoting by $\mathcal E_m(t)$ the full nonlinear flow-plate energy for the trajectory $S_t(y_m^0)$ with initial datum $y^0_m \in \cD(\bT)$ (and denoting the energy for initial data $y^0 \in Y$ by $\mathcal E(t)$) we analyze the difference $\mathcal E(t) - \mathcal E_m(t)$.

Recalling the notation $\bE(t)$ (as in Lemma \ref{energybound}) we note a preliminary fact:
If $\cE_m(t) - \cE(t) \to 0$ {\em and} $y_m(t) \rightharpoonup y(t)$ in the topology of $\widetilde Y$, then $\bE_m(t) - \bE(t) \to 0$. Indeed, (suppressing $t$)
\begin{align}
\bE - \bE_m =&~\cE-\cE_m  -2U\lb \phi,u_x\rb+2U\lb \phi_m,u_{mx}\rb \nonumber \\&+\lb u-u_m,p_0\rb +\lb[u,u]-[u_m,u_m],F_0\rb
\end{align}
However, in the topology of the finite energy space (corresponding to convergence in $\widetilde Y$), the terms on lines two and three above are compact (lower order) and hence the weak convergence $S_t(y^m_0) \rightharpoonup S_t(y_0)$ is improved to strong convergence; hence these terms vanish. The von Karman bracket term in line four above vanishes via the weak continuity of the von Karman bracket. With the fact above in mind we need only show $\cE_m(t) - \cE(t) \to 0$ uniformly in time to complete the result in Theorem \ref{strongcont}.

We first let $t$ be fixed and look at \eqref{eident} applied to both $\mathcal E$ and $\cE_m$ on $[0,t]$:
\begin{align}\label{energies**}
\cE_m(t) + \int_0^tk\|u_{mt}\|^2 d\tau = \cE_m(0);~~\cE(t) + \int_0^tk\|u_{t}\|^2 d\tau = ~\cE(0)
\end{align}

From weak convergence of the trajectories and lower semicontinuity of the norm we obtain for all $ t > 0 $

\begin{equation}\label{lows}
\int_0^tk\|u_t\|^2 d\tau \leq \liminf_m \int_0^t k\|u_{mt}\|^2d\tau
\end{equation}
Then
\begin{align}\nonumber
\cE_m(t)-\cE(t) = & ~\cE_m(t) - \cE(t) +\int_0^tk\|u_{mt}\|^2d\tau-\int_0^tk\|u_{t}\|^2d\tau \\
&-\left[\int_0^tk\|u_{mt}\|^2d\tau-\int_0^tk\|u_{t}\|^2d\tau\right]
\end{align}
The above equalities \eqref{energies**} give:
\begin{align}
\cE_m(t)-\cE(t) \le &~\cE_m(0)-\cE(0)-\left[\int_0^t k\|u_{mt}\|^2 - \int_0^t k\|u_{t}\|^2\right]
\end{align}
Taking the $\limsup$ in $m$ we have
\begin{align}
\limsup_m [\cE_m(t) - \cE(t) ]\le &~\lim_m[\cE_m(0)-\cE(0)] & \nonumber \\  +\limsup_m&\left(-\int_0^tk\|u_{mt}\|^2 + \int_0^t k\|u_{t}\|^2\right)\nonumber\\
\le  ~\lim_m[\cE_m(0)&-\cE(0)] -\liminf_m\left(\int_0^tk\|u_{mt}\|^2 - \int_0^t k\|u_{t}\|^2\right)\nonumber\\
\le ~ \lim_m[\cE_m(0)&-\cE(0)] = 0
\end{align}
where we have used \eqref{lows} in the last line, uniqueness of weak solutions, and the continuity of the energy functional on $Y$ ($\cE_m(0) \to \cE(0)$). The $\limsup$ is taken uniformly in time.
Now, by the lower semicontinuity of $\cE$ on $Y$ we have
$$\sup_{t>0}[\liminf_m\cE_m(t) - \cE(t)] \ge 0.$$
Hence, we observe
$$\sup_{t>0}~ \limsup_m [\cE_m(t)-\cE(t)] \le 0 \le \sup_{t>0}~\liminf_m~[\cE_m(t) -\cE(t)],$$
and thus
\begin{align}
\sup_{t>0}\big[\lim_m\cE_m(t)-\cE(t)\big]=0.
\end{align}

The above convergence gives that---uniformly in time $t>0$---$\bE(t)-\bE_m(t) \to 0$ as $m \to \infty$, and thus $$\|S_t(y^0 )\|_{Y_{\rho}}  -  \|S_{t} (y^0_{m})\|_{Y_{\rho}}  \rightarrow 0,~~m \rightarrow \infty$$ for any $\rho>0$.
\end{proof}

In the arguments presented above, we analyzed the energy relation for the full flow-plate system. One might suspect that the dynamics are uniform-in-time Lipschitz continuous without relying on sufficiently large damping coefficients; such result is interesting from the point of view of sensitivity analysis and {\em flutter}. Uniform-in-time  Hadamard  continuity certainly holds in the linear case ($f_V=0$), or in the nonlinear case on any finite time interval (as shown above). Uniform-in-time Hadamard  continuity is also true when the attracting set for the plate dynamics is reduced to a single point (\cite{delay} or see Theorem \ref{th:main2} below). However, whether  the uniform-in-time Lipschitz property is valid in general is presently not known.

Hence, in order to obtain the uniform-in-time Hadamard continuity of the semigroup $S_t(\cdot)$ (via the energy relation on the difference of trajectories) it seems necessary to have $u_t \in L_1(0,\infty; L_2(\Omega))\cap L_2(0,\infty);L_2(\Omega)$. We accomplish this via Theorem \ref{exp} by choosing the damping coefficients $k$ and $\beta$ sufficiently large.

Another interesting possibility arises in considering that the attracting set for the plate dynamics is {\em regular}---see Theorem \ref{th:main2}. Any trajectory entering the attracting set $\mathscr U$ at some finite time obtains sufficient regularity for our results in Section \ref{5} to apply, yielding the desired end result. However, a trajectory which does not enter the attractor in finite time may converge to the attractor (perhaps exponentially fast) without providing the necessary control on the decay of the velocity for our arguments above to hold. Hence, at this point, we cannot envision a proof of the main result in Theorem \ref{conequil1} without at least assuming a sufficiently large viscous damping parameter.

The technical crux of the long-time behavior analysis for this model hinges upon the sensitivity analysis between the difference of two ``nearby" trajectories. In the analysis we have just presented, sufficiently large damping guarantees that nearby points stay nearby, and hence, showing desirable convergence properties for regular initial data can be transferred to nearby finite energy data. However, in general, for the nonlinear flow-plate problem sensitivity analysis is a key concern. Indeed, when studying flutter, ``small" perturbations in the system can produce ``large" effects in the overall dynamics. We have shown that by incorporating sufficiently large damping, we can control the end behavior of the flow-plate system---witnessing it converge to  the set of stationary states. Moreover, if we make the physical assumption that this set $\mathcal N$ is discrete, we see that the damping eliminates non-static end behavior.

\end{document}